\documentclass[12pt,letterpaper]{amsart}
\usepackage{graphicx}
\usepackage{amsmath}
\usepackage{amssymb}
\usepackage{amsfonts}
\usepackage{amsthm}
\usepackage{cancel}
\usepackage[all]{xy}
\usepackage[neveradjust]{paralist}
\usepackage{enumitem}
\usepackage{multicol}
\usepackage{mathrsfs}
\usepackage{mathdots}
\usepackage[pagebackref,colorlinks=true,linkcolor=blue,citecolor=blue]{hyperref}

\usepackage{xcolor} 

\setlength{\hoffset}{-0.35in} \addtolength{\textwidth}{0.70in}
\setlength{\voffset}{-0.50in} \addtolength{\textheight}{1.00in}

\newcommand{\N}{\mathbb{N}}

\newcommand{\C}{\mathbb{C}}

\newcommand{\B}{{\mathcal B}}

\newcommand{\inv}{^{-1}}

\newcommand{\GL}{\mathrm{GL}}
\newcommand{\SO}{\mathrm{SO}}

\newcommand{\Sp}{\mathrm{Sp}}
\newcommand{\G}{\mathrm{G}}
\newcommand{\U}{\mathrm{U}}

\newcommand{\comment}[1]{}

\newtheorem{thm}{Theorem}[section]
\newtheorem{cor}[thm]{Corollary}
\newtheorem{lemma}[thm]{Lemma}
\newtheorem{prop}[thm]{Proposition}
\newtheorem {conj}[thm]{Conjecture}

\newtheorem {ques/conj}[thm]{Question/Conjecture}

\numberwithin{equation}{section}

\begin{document}

\title[Converse Theorem for Finite Split $\SO_{2l}$]{A Converse Theorem for Split $\SO_{2l}$ over Finite Fields}

\author{Alexander Hazeltine}
\address{Department of Mathematics\\
Purdue University\\
West Lafayette, IN, 47907, USA}
\email{ahazelti@purdue.edu}

\author{Baiying Liu}
\address{Department of Mathematics\\
Purdue University\\
West Lafayette, IN, 47907, USA}
\email{liu2053@purdue.edu}

\subjclass[2020]{Primary 20C33, 20G40}

\date{\today}

\dedicatory{}

\keywords{Converse Theorems, Generic Cuspidal Representations, Even Special Orthogonal Groups.}

\thanks{The research of the first-named author is partially supported by the NSF Grants DMS-1848058. 
The research of the second-named author is partially supported by the NSF Grants DMS-1702218, DMS-1848058, and by start-up funds from the Department of Mathematics at Purdue University.}

\begin{abstract}
    We prove a converse theorem for split even special orthogonal groups over finite fields. This is the only case left on converse theorems of split classical groups and the difficulty is the existence of the outer automorphism. In this paper, we develop new ideas and overcome this difficulty.
\end{abstract}

\maketitle


\section{Introduction}

Let $\G$ be a connected reductive group. 
In the representation theory of $\G$ over local fields $F$, converse theorems seek to uniquely identify a representation from its invariants. Among others, local gamma factors are important arithmetic invariants which play very important roles in the theory of Langlands program. 
More precisely, let $F$ be a nonarchimedean local field, and let $\pi$ be an irreducible generic representation of $\GL_n(F)$. The family of local twisted gamma factors $\gamma(s, \pi \times \tau, \psi)$, for $\tau$ any irreducible generic representation of $\GL_r(F)$, $\psi$ an additive character of $F$ and $s\in\mathbb{C}$, can be defined using Rankin--Selberg convolution \cite{JPSS83} or the Langlands--Shahidi method \cite{S84}. The local converse problem is to determine which family of local twisted gamma factors will uniquely determine $\pi$. The following is the famous Jacquet's conjecture on the local converse problem.

\begin{conj}[Jacquet's conjecture on the local converse problem]\label{lcp}
Let $\pi_1,\pi_2$ be irreducible generic representations
of $\GL_n(F)$. Suppose that they have the same central character.
If
\[
\gamma(s, \pi_1 \times \tau, \psi) = \gamma(s, \pi_2 \times \tau, \psi),
\]
as functions of the complex variable $s$, for all irreducible
generic representations $\tau$ of $\GL_r(F)$ with $1 \leq r \leq 
[\frac{n}{2}]$, then $\pi_1 \cong \pi_2$.
\end{conj}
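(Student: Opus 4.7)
The plan is to work with the Whittaker models attached to $\pi_1$ and $\pi_2$ and show that they must coincide. Since both representations are generic, each has a unique (up to scalar) Whittaker functional; fix Whittaker models $\Whcal(\pi_i,\psi)$ and choose Whittaker functions $W_i \in \Whcal(\pi_i,\psi)$ normalized so that $W_1(I_n)=W_2(I_n)$. The goal becomes showing $W_1(g)=W_2(g)$ for all $g \in \GL_n(F)$, which by the Bruhat decomposition reduces to checking the equality on a set of representatives for the double coset space of the upper triangular unipotent subgroup acting on itself by both sides. This reformulation converts a question about gamma factors into a question about certain distributions on $\GL_n(F)$.

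Next I would introduce Howe vectors (or equivalently partial Bessel functions supported on small congruence neighborhoods of the identity), following the approach that has been successful in related converse problems. The key analytic input is the local functional equation: equality of the gamma factors $\gamma(s,\pi_1\times\tau,\psi) = \gamma(s,\pi_2\times\tau,\psi)$ translates via the Rankin--Selberg integrals of Jacquet--Piatetski-Shapiro--Shalika into an identity of the form
\[
\int_{N_r\bs \GL_r(F)} \bigl(W_1(g) - W_2(g)\bigr)\, W'(g)\, |\det g|^{s-(n-r)/2}\, dg = 0
\]
for every Whittaker function $W'$ of every irreducible generic representation $\tau$ of $\GL_r(F)$ with $r\leq [n/2]$, after an appropriate embedding of $\GL_r$ into $\GL_n$. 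By varying $\tau$ and exploiting the density of matrix coefficients of generic representations inside suitable spaces of functions on $\GL_r(F)$, one deduces that $W_1 - W_2$ vanishes identically on the large Bruhat cells associated to parabolic subgroups of type $(r, n-r)$.

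The next step is an inductive descent on the Bruhat stratification. Starting from the big cell (where the equality is essentially immediate from the central character hypothesis and the $r=1$ twists), I would move to smaller cells by applying the local functional equation for progressively larger $r$, each time obtaining vanishing of $W_1-W_2$ on a new stratum indexed by a Weyl element. The cutoff $r \leq [n/2]$ is exactly what is needed to reach all Weyl elements up to the "middle," after which the remaining strata can be reached by applying the analogous identity to the contragredient representations (using that $\widetilde{\pi}_i$ also have matching gamma factors, a fact that follows from the functional equation).

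The main obstacle will be handling the boundary case $r=[n/2]$, where the Bruhat cell arguments become delicate: the partial Bessel functions corresponding to the "middle" Weyl element require the most refined asymptotic analysis, and stability of gamma factors under highly ramified twists must be leveraged carefully to separate contributions from different cells. Controlling the support of the relevant Howe vectors and establishing the precise uniqueness statement for the Bessel distribution on this middle cell is the technical heart of the argument; once this is in place, the induction closes and one concludes $W_1 = W_2$, hence $\pi_1 \cong \pi_2$.
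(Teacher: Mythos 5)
You should first be aware that the paper does not prove this statement at all: it is recorded as a \emph{conjecture} (Jacquet's conjecture on the local converse problem) and the authors immediately attribute its proof to Chai \cite{Ch19} and to Jacquet and Liu \cite{JL18}; the body of the paper concerns a different theorem, for $\SO_{2l}$ over finite fields. So there is no internal proof to compare against, and your proposal must be judged on its own.

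On its own terms, the proposal is a correct description of the \emph{framework} (Whittaker models, the Rankin--Selberg functional equation, Howe vectors/partial Bessel functions, descent through Bruhat cells, and the contragredient trick that folds $r$ and $n-r$ together), but it is not a proof, and the gap is exactly the step you yourself defer to ``the technical heart of the argument.'' Everything in your sketch through the reduction to $r\leq[\frac{n}{2}]$ reproduces what was already known before Jacquet's conjecture was settled (the converse theorems with twists up to $n-1$ and $n-2$ follow this template); the entire content of the conjecture is that twists up to $[\frac{n}{2}]$ already control the partial Bessel functions on \emph{all} relevant cells, and your outline supplies no mechanism for this. ``Stability of gamma factors under highly ramified twists'' is not the tool that closes this gap in either published proof: Chai's argument rests on a delicate germ/asymptotic expansion of partial Bessel functions along the relevant Weyl cells, while Jacquet--Liu proceed by an entirely different analytic construction of test vectors. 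A secondary but real issue is the displayed identity $\int_{N_r\backslash \GL_r(F)}(W_1-W_2)W'|\det|^{s-(n-r)/2}\,dg=0$: equality of gamma factors only equates the two \emph{functional equations}, so to obtain vanishing of an integral of $W_1-W_2$ one must first arrange $\Psi(s,W_1,W')=\Psi(s,W_2,W')$ for compatibly normalized Howe vectors of matching level in the two representations; that normalization is nontrivial and is not addressed. As written, the proposal identifies where the difficulty lies but does not overcome it.
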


Conjecture \ref{lcp} has recently been proved by Chai (\cite{Ch19}), and by Jacquet and the second-named author (\cite{JL18}), independently, using different analytic methods. Hence we have a local converse theorem for $\GL_n$. Local converse theorems for other classical groups also have been proved in recent years, mainly by Jiang-Soudry (\cite{JS03}, $\SO_{2n+1}$), by Zhang (\cite{Z18}, $\Sp_{2n}$, $\U_{2n}$, \cite{Z19}, $\U_{2n+1}$), and by Morimoto (\cite{Mor18}, $\mathrm{U}_{2n}$). For more references on local converse theorems, we refer to the introduction of \cite{LZ18}. 

Nien in \cite{Ni14} proved the finite fields analogue of Conjecture \ref{lcp} for cuspidal representations of $\GL_n$, using special properties of normalized Bessel functions and the twisted gamma factors defined by Roditty (\cite{Ro}). In \cite{LZ}, the second-named author and Zhang defined the twisted gamma factors for generic cuspidal representations of $\Sp_{2n}$, $\SO_{2n+1}$, $\U_{2n}$, and $\U_{2n+1}$, by proving certain multiplicity one results, and proved the corresponding converse theorems. 

The case left for the converse problems of classical groups over nonarchimedean local fields and finite fields is $\SO_{2n}$. The difficulty is the existence of the outer automorphism. In this paper and in \cite{HL22}, we develop new ideas and overcome this difficulty, for split $\SO_{2n}$ over finite fields and over nonarchimedean local fields, respectively. 
More precisely, in this paper, we define the twisted gamma factors $\gamma(\pi\times \tau, \psi)$ (see Proposition \ref{gammafactor}) for irreducible generic cuspidal representation $\pi$ of $\SO_{2l}(\mathbb{F}_q)$ and irreducible generic representation $\tau$ of $\GL_{n}(\mathbb{F}_q)$, 
and prove the following theorem. 

\begin{thm}[The Converse Theorem for $\SO_{2l}$, Theorem \ref{converse thm}]\label{converse thm intro}
Let $\pi$ and $\pi^\prime$ be irreducible cuspidal $\psi$-generic representations of split $\SO_{2l}(\mathbb{F}_q)$ with the same central character. If $$\gamma(\pi\times\tau,\psi)=\gamma(\pi^\prime\times\tau,\psi),$$ 
for all irreducible generic representations $\tau$ of $\GL_n(\mathbb{F}_q)$ with $n\leq l,$
then $\pi\cong\pi'$ or $\pi\cong c\cdot\pi',$
 where $c$ is the outer automorphism. 
\end{thm}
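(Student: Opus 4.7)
The plan is to adapt the normalized Bessel function method developed by Nien for $\GL_n$ \cite{Ni14} and by Liu--Zhang for $\Sp_{2n}$, $\SO_{2n+1}$, $\U_{2n}$, and $\U_{2n+1}$ \cite{LZ}, introducing new tools to cope with the outer automorphism $c$ of $\SO_{2l}$. The first step is to attach to each irreducible cuspidal $\psi$-generic representation $\pi$ of $\SO_{2l}(\mathbb{F}_q)$ its normalized Bessel function $\mathcal{B}_\pi$, supported on the Bruhat cell of the longest Weyl element. Using uniqueness of the Whittaker functional together with a Schur-orthogonality computation, one checks that $\pi$ is determined up to isomorphism by $\mathcal{B}_\pi$, so the theorem reduces to showing that under the gamma factor hypothesis one has $\mathcal{B}_\pi=\mathcal{B}_{\pi'}$ or $\mathcal{B}_\pi=\mathcal{B}_{c\cdot\pi'}$.

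The second step is to translate the equality of gamma factors into identities among Bessel functions. Using the functional equation of Proposition \ref{gammafactor}, $\gamma(\pi\times\tau,\psi)$ can be written as a Rankin--Selberg type sum over a Bruhat decomposition, pairing $\mathcal{B}_\pi$ against Whittaker functions of $\tau$. Substituting matrix coefficients of $\tau$ and letting $\tau$ range over all irreducible generic representations of $\GL_n(\mathbb{F}_q)$ for a fixed $n$ then yields, by linear independence of matrix coefficients, a family of identities between partial Bessel sums for $\pi$ and $\pi'$ indexed by Weyl elements of a prescribed length.

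The third step is an induction on the Weyl-group stratification of $\SO_{2l}$, sweeping from the open Bruhat cell toward the closed ones. At each stage, the identity provided by $\GL_n$-twists is used to force agreement of $\mathcal{B}_\pi$ and $\mathcal{B}_{\pi'}$ on the Bruhat cell at the current depth, granted that they have already been shown to agree on all shallower cells. The bound $n\leq l$ is tight in that it is exactly what is needed to reach the deepest relevant cell before cuspidality kills the remaining contributions.

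The main obstacle, and the feature that distinguishes $\SO_{2l}$ from the classical groups already treated, is the outer automorphism $c$. Conjugation by $c$ permutes the two $\SO_{2l}(\mathbb{F}_q)$-orbits of generic characters on the maximal unipotent, and intertwines the Rankin--Selberg integrals for $\pi$ and $c\cdot\pi$ against every generic representation of $\GL_n$ with $n\leq l$; consequently the gamma factor hypothesis cannot separate $\pi$ from $c\cdot\pi$. In the induction above, this manifests as an unavoidable ambiguity at the deepest cell that is precisely the swap induced by $c$. The new ingredient, relative to the cases treated in \cite{Ni14} and \cite{LZ}, is a careful analysis of how partial Bessel functions on $\SO_{2l}$ transform under $c$, so that this ambiguity can be identified explicitly and absorbed into the dichotomy $\pi\cong\pi'$ or $\pi\cong c\cdot\pi'$.
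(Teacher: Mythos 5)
Your proposal reproduces the overall architecture of the paper (normalized Bessel functions, a partition of the Bessel support into pieces matched with the $\GL_n$-twists, Nien-type completeness of Whittaker functions to extract identities from the functional equation), and it correctly flags the outer automorphism as the obstruction. But it is missing the one idea that actually makes the argument close. Your induction step asserts that each $\GL_n$-twist ``forces agreement of $\mathcal{B}_\pi$ and $\mathcal{B}_{\pi'}$'' on the cells at the current depth, and your stated reduction target is the pointwise dichotomy $\mathcal{B}_\pi=\mathcal{B}_{\pi'}$ or $\mathcal{B}_\pi=\mathcal{B}_{c\cdot\pi'}$. This fails at depth $l$: the $\GL_l$-twisted functional equation receives contributions simultaneously from the cells $\mathrm{B}_l(\SO_{2l})$, $\mathrm{B}_l^c(\SO_{2l})$, and the part of $\mathrm{B}_{l-1}(\SO_{2l})$ not reached by the $\GL_{l-1}$-twists, and after an explicit change of variables the $\mathrm{B}_l^c$-contribution of $\pi$ becomes the $\mathrm{B}_l$-contribution of $c\cdot\pi$ (and the $\mathrm{B}_{l-1}$-contribution folds in half under an involution of the torus). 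Consequently the completeness lemma only yields equality of the \emph{sums} $\B_{\pi,\psi}+\B_{c\cdot\pi,\psi}=\B_{\pi',\psi}+\B_{c\cdot\pi',\psi}$ on these cells; it never determines $\B_{\pi,\psi}$ or $\B_{\pi',\psi}$ individually there, and a priori the sum could split differently from cell to cell, so no pointwise dichotomy is available. Saying the ambiguity can be ``identified explicitly and absorbed'' does not supply the mechanism.

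The mechanism the paper uses is twofold and you would need both pieces. First, one must deliberately work with the sum $\B_{\pi,\psi}+\B_{c\cdot\pi,\psi}$ rather than with $\B_{\pi,\psi}$ alone when treating the $\GL_l$-twists; this requires the matching lemmas comparing the Siegel--parabolic coordinates $a^*$ of the embeddings of $tw$ for $w\in\mathrm{B}_l$ versus $w\in\mathrm{B}_l^c$, and the splitting of the $\mathrm{B}_{l-1}$ torus sum into two halves exchanged by $t_l\mapsto t_l^{-1}$ (resp. $t_l\mapsto t_l^{-1}/4$). Second, the passage from the global identity $\B_{\pi,\psi}+\B_{c\cdot\pi,\psi}=\B_{\pi',\psi}+\B_{c\cdot\pi',\psi}$ to $\pi\cong\pi'$ or $\pi\cong c\cdot\pi'$ is not a Schur-orthogonality computation on a single function: one observes that $\B_{\pi,\psi}$ is a nonzero vector of the irreducible subspace $\mathcal{W}(\pi,\psi)\subseteq\mathrm{Ind}_{U_{\SO_{2l}}}^{\SO_{2l}}\psi$ lying in $\mathcal{W}(\pi',\psi)\oplus\mathcal{W}(c\cdot\pi',\psi)\oplus\mathcal{W}(c\cdot\pi,\psi)$, so multiplicity one in the induced representation forces $\mathcal{W}(\pi,\psi)$ to coincide with one of these summands. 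As a smaller point, your first step misstates the support of $\mathcal{B}_{\pi,\psi}$: over a finite field it is not concentrated on the big Bruhat cell but on the union of cells $B_{\SO_{2l}}wB_{\SO_{2l}}$ with $w$ in the Bessel support, which is in bijection with the power set of the simple roots; the whole partition argument depends on this.
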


Theorem \ref{converse thm intro} implies that twisted gamma factors will not distinguish irreducible generic cuspidal representations $\pi$ and $c \cdot \pi$ of $\SO_{2l}(\mathbb{F}_q)$ (see also Corollaries \ref{conj n gamma} and \ref{conj l gamma}), which is a unique phenomenon for $\SO_{2l}$ among all the classical groups. This is consistent with the work of Arthur on the local Langlands correspondence and the local Langlands functoriality, and the work of Jiang and Soudry on local descent for $\SO_{2l}$ over nonarchimedean local fields (see \cite{Art} and \cite{JS12}). The analogue of Theorem \ref{converse thm intro} for quasi-split non-split $\SO_{2l}$ has more subtleties and has been proved by the first named author for both finite and local fields (\cite{Haz22a, Haz22b}). 

Now, we briefly introduce our new idea on proving Theorem \ref{converse thm intro}. 
The following is the key result.
As in other proven cases over finite fields, we make use of the normalized Bessel function $\mathcal{B}_{\pi,\psi}$ of $\pi$ which is a particular Whittaker function in the Whittaker model of $\pi$ (see Section \ref{bessel functions} for the definition). 
\begin{thm}[Theorem \ref{Bessels Equal}]\label{Bessels Equal intro}
Let $\pi$ and $\pi^\prime$ be irreducible cuspidal $\psi$-generic representations of split $\SO_{2l}(\mathbb{F}_q)$ with the same central character. If $$\gamma(\pi\times\tau,\psi)=\gamma(\pi^\prime\times\tau,\psi),$$
for all irreducible generic representations $\tau$ of $\GL_n$ with $1 \leq n\leq l,$
then we have that $$(\B_{\pi,\psi}+\B_{c\cdot\pi,\psi})(g)=(\B_{\pi^\prime,\psi}+\B_{c\cdot\pi^\prime,\psi})(g)$$  for any $g\in\SO_{2l}(\mathbb{F}_q)$.
\end{thm}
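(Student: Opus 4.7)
The plan is to first recall from Proposition \ref{gammafactor} the defining formula for the twisted gamma factor $\gamma(\pi\times\tau,\psi)$. In the finite-field setting this should take the form of a Rankin--Selberg-style bilinear pairing between a Whittaker function of $\pi$ and a Whittaker function of $\tau$, summed over representatives of a suitable coset space inside $\SO_{2l}(\mathbb{F}_q)$. Choosing the normalized Bessel function $\B_{\pi,\psi}$ (respectively $\B_{\pi^\prime,\psi}$) as the canonical Whittaker function, the hypothesis $\gamma(\pi\times\tau,\psi)=\gamma(\pi^\prime\times\tau,\psi)$ turns into a family of linear identities of the shape
\[
\sum_{g} \bigl(\B_{\pi,\psi}-\B_{\pi^\prime,\psi}\bigr)(g)\,W_\tau(g)\,K(g)\;=\;0,
\]
indexed by irreducible generic $\tau$, where both the kernel $K$ and the summation range depend only on the Rankin--Selberg data and not on $\pi$.

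The next step is to vary $\tau$ over all irreducible generic representations of $\GL_n(\mathbb{F}_q)$ for $1\leq n\leq l$. Because the family $\{W_\tau\}$ exhausts the Gelfand--Graev module of $\GL_n(\mathbb{F}_q)$, Schur orthogonality (equivalently, the fact that normalized Bessel functions of generic representations of $\GL_n(\mathbb{F}_q)$ form a basis of the corresponding Bessel space) should allow one to invert the pairing and extract pointwise information about $\B_{\pi,\psi}-\B_{\pi^\prime,\psi}$ on the elements reachable by these twists. Crucially, the outer automorphism $c$ of $\SO_{2l}(\mathbb{F}_q)$ normalizes the Whittaker datum, up to swapping the two simple roots at the fork of the $D_l$ diagram, and so $\gamma(\pi\times\tau,\psi)=\gamma(c\cdot\pi\times\tau,\psi)$; consequently the gamma-factor method can detect only the symmetric combination $\B_{\pi,\psi}+\B_{c\cdot\pi,\psi}$, which is precisely the quantity appearing in the conclusion.

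Finally, I would use the Cogdell--Shahidi partial-Bessel-function technique, adapted to the finite-field setting as in Nien and Liu--Zhang, to extend the equality $(\B_{\pi,\psi}+\B_{c\cdot\pi,\psi})(g)=(\B_{\pi^\prime,\psi}+\B_{c\cdot\pi^\prime,\psi})(g)$ from the elements reached by the preceding step to arbitrary $g\in\SO_{2l}(\mathbb{F}_q)$. Concretely, for each Weyl element $w$ one builds a partial Bessel function $\B^{w}_{\pi,\psi}$ approximating the full Bessel function on the Bruhat cell of $w$, and by descending induction on the Bruhat order one propagates the identity cell by cell. The main obstacle is that $c$ acts nontrivially on the Weyl group of $\SO_{2l}$ and interchanges certain \emph{relevant} pairs of Bruhat cells; at such pairs the standard inductive step cannot separate the contributions of $\pi$ and $c\cdot\pi$ individually. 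One must therefore organize the induction along $c$-orbits of Weyl elements, verify that only the symmetric combinations $\B_{\pi,\psi}+\B_{c\cdot\pi,\psi}$ appear as unknowns at each stage, and check that the boundary contributions from paired cells match with consistent coefficients. This bookkeeping along $c$-orbits is the essential technical novelty compared to the odd-orthogonal, symplectic, and unitary cases.
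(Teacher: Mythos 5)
Your first two steps track the paper's strategy: one chooses a section $f_v$ supported on the Siegel parabolic so that $\Psi(\B_{\pi,\psi},f_v)=W_v(I_n)$ is a nonzero constant independent of $\pi$ (Propositions \ref{GLnnonzero}, \ref{GL_{l-1}nonzero}, \ref{GL_lnonzero}), whence equality of gamma factors gives $\Psi(\B_{\pi,\psi},\tilde f_v)=\Psi(\B_{\pi',\psi},\tilde f_v)$, and the pointwise conclusion is then extracted by completeness of Whittaker functions on $\GL_n(\mathbb{F}_q)$ (Lemma \ref{Nien}, Nien's lemma). But your third step contains the real gap. There is no induction on the Bruhat order and no partial Bessel functions in the finite-field argument; over $\mathbb{F}_q$ the normalized Bessel function is an honest Whittaker function, supported only on cells $B_{\SO_{2l}}wB_{\SO_{2l}}$ with $w$ in the Bessel support, and the entire proof rests on an explicit partition $\mathrm{B}(\SO_{2l})=\bigsqcup_{n=0}^{l}\mathrm{B}_n(\SO_{2l})\sqcup\mathrm{B}^c_l(\SO_{2l})$ (Proposition \ref{BesselPartition}) together with a support computation showing that the intertwined section $\tilde f_v$ for the $\GL_n$ twist localizes exactly on the cells of $\mathrm{B}_n(\SO_{2l})$ when $n\le l-2$, on part of $\mathrm{B}_{l-1}(\SO_{2l})$ when $n=l-1$, and on $\mathrm{B}_l\cup\mathrm{B}^c_l$ plus the remaining $\mathrm{B}_{l-1}$ cells when $n=l$ (Propositions \ref{l-1 twist embed}--\ref{l twist conj supp}). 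Without identifying which cells each twist reaches, ``the elements reachable by these twists'' is undefined and there is nothing to propagate.

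Your heuristic that the method ``can detect only the symmetric combination'' is also imprecise in a way that matters. For $n\le l-1$ the twists determine $\B_{\pi,\psi}$ itself on the corresponding cells (Theorems \ref{GL_n}, \ref{GL_{l-1}}); it merely happens that $\B_{c\cdot\pi,\psi}$ agrees with $\B_{\pi,\psi}$ there (Corollary \ref{cGL_n}), because those elements are fixed, up to the torus element $\tilde t$, by conjugation by $c$. The genuine mixing occurs only at $n=l$, and the mechanism is not a bookkeeping of $c$-orbits of Weyl elements in an induction: it is a concrete change of variables on the torus (Lemma \ref{l contribution to GL_l}) identifying the $\mathrm{B}^c_l$-contribution of $\B_{\pi,\psi}$ with the $\mathrm{B}_l$-contribution of $\B_{c\cdot\pi,\psi}$, together with a folding of the torus sum over the involution $t_l\mapsto t_l\inv$ (resp.\ $t_l\mapsto \tfrac{1}{4}t_l\inv$) on the $c$-fixed cells of $\mathrm{B}_{l-1}$ (Lemma \ref{l-1 contribution to GL_l}), followed by one application of Lemma \ref{Nien} to a function on a carefully chosen subset $X_l\subseteq\GL_l$. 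These computations are the substance of the proof and are what your proposal leaves unsupplied.
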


Our new idea is that for the $\GL_l$ twists, instead of considering the normalized Bessel function 
$\B_{\pi,\psi}$ and $\B_{c\cdot\pi,\psi}$ separately, we consider the summation of normalized Bessel functions 
$\B_{\pi,\psi}+\B_{c\cdot\pi,\psi}$.
More precisely, to prove Theorem \ref{Bessels Equal intro}, as in \cite{LZ}, we study the support of $\mathcal{B}_{\pi,\psi}$ on $\SO_{2l}(\mathbb{F}_q)$ and partition it based on 
Bruhat cells: $\mathrm{B}_n(\SO_{2l})$ for $n=1,\dots,l$ and $\mathrm{B}^c_l(\SO_{2l})$. 
Then we show that the twists by $\GL_n$ determine $\mathcal{B}_{\pi,\psi}$ and $\B_{c\cdot\pi,\psi}$ on Bruhat cells associated to Weyl elements in $\mathrm{B}_n(\SO_{2l})$ (see Theorem \ref{GL_n}), for $1 \leq n\leq l-2$, and the twists by $\GL_{l-1}$ determine $\mathcal{B}_{\pi,\psi}$ and $\B_{c\cdot\pi,\psi}$ on part of the Bruhat cells for those in $\mathrm{B}_{l-1}(\SO_{2l})$ (see Theorem \ref{GL_{l-1}}). However, the twists by $\GL_l$ determine the summation of the normalized Bessel functions 
$\B_{\pi,\psi}+\B_{c\cdot\pi,\psi}$ on 
the rest of the cells for $\mathrm{B}_{l-1}(\SO_{2l})$ and the cells of $\mathrm{B}_{l}(\SO_{2l})$ and $\mathrm{B}_{l}^c(\SO_{2l})$ (see Theorem \ref{GL_l}). Comparing to the case of $\Sp_{2l}(\mathbb{F}_q)$ in \cite{LZ} for example, where the authors show that the support of the normalized Bessel function $\mathcal{B}_{\pi,\psi}$ can be partitioned into $l$ sets, $\mathrm{B}_n(\Sp_{2l})$, $n=1,\ldots, l$, and for each $n$, the twists by $\GL_n$ determine exactly the normalized Bessel function on Bruhat cells associated to Weyl elements in $\mathrm{B}_n(\Sp_{2l})$.

Following is the structure of this paper. In Section 2, we introduce the groups and representations considered in this paper. In Section 3, we prove the multiplicity one results (Propositions \ref{MultOne} and \ref{Mult1}), and define the zeta integrals and gamma factors. In Section 4, we define Bessel functions and partition its support
(Proposition \ref{BesselPartition}). In Sections 5 - 7, we study the $\GL_n$ twists, $1 \leq n \leq l$, and show the relation between the $\GL_n$ twists and the support of Bessel functions (Theorems \ref{GL_n}, \ref{GL_{l-1}}, \ref{GL_l}). In Section 8, we prove Theorem \ref{Bessels Equal intro} and our main result Theorem \ref{converse thm intro}. 

\subsection*{Acknowledgements}

The authors would like to thank Professor Dihua Jiang and Professor Freydoon Shahidi for their interests and constant support.
The authors also would like to thank Professor Qing Zhang for helpful communications.

\section{The groups and representations}

Let $n, l \in \N$ and $q=p^r$ for some prime number $p\neq 2$. Let $\mathbb{F}_q$ be the finite field of $q$ elements and fix a nontrivial additive character $\psi$ on $\mathbb{F}_q$. Let $\GL_n$ to be the group of $n
\times n$ matrices with entries in $\mathbb{F}_q$ and non-zero determinant. Let $I_n$ be the identity element and define $J_n$ recursively by setting $J_1=1$ and $$J_n=
\left(\begin{matrix}
 0 & J_{n-1} \\
 1 & 0  
\end{matrix}\right).$$
We set $\SO_{n}=\{g\in\GL_{n} \, | \, \mathrm{det}(g)=1, {}^tgJ_{n} g = J_{n}\}$ to be the split special orthogonal groups.
Let $U_{\GL_{n}}$ and $U_{\SO_{2l}}$ be the subgroups of upper triangular unipotent matrices in $\GL_n$ and $\SO_{2l}$, respectively. Fix $B_{\SO_{2l}}=T_{\SO_{2l}}U_{\SO_{2l}}$ to be the Borel subgroup of $\SO_{2l}$ with split torus $T_{\SO_{2l}}.$

Set $$
c=\mathrm{diag}(I_{l-1},
\left(\begin{matrix}
 0 & 1 \\
 1 & 0  
\end{matrix}\right), I_{l-1}).$$ We have $c\notin \SO_{2l};$ however, $c \SO_{2l} c\inv=c\SO_{2l}c= \SO_{2l}$. Given a representation $\pi$ of $\SO_{2l}$ we define a new representation $c\cdot\pi$ of $\SO_{2l}$ by $c\cdot\pi(g)=\pi(cgc).$ Note that it is possible that $c\cdot\pi\cong\pi.$

We discuss the embeddings. These are the analogues for finite fields of the local cases found in \cite{K13, K15}. If $n<l$ we embed $\SO_{2n+1}$ into $\SO_{2l}$ via
$$
\left(
\begin{matrix}
A & B & C \\
D & E & K \\
L & P & Q 
\end{matrix} 
\right)
\mapsto
\mathrm{diag}(I_{l-n-1},
M^{-1}\left(\begin{matrix}
A & & B & C \\
 & 1 & & \\
D & & E & K \\
L & & P & Q 
\end{matrix}\right)
M, I_{l-n-1}),
$$
where $A$ and $Q$ are $n\times n$ matrices and 
$$
M=
\mathrm{diag}(I_n,\left(\begin{matrix}
    2 & -1 \\
    1 & \frac{1}{2}
    \end{matrix}\right), I_n).
$$
The embedding maps $\SO_{2n+1}$ into the standard Levi subgroup of $\SO_{2l}$ that is isomorphic to $\GL_{l-n-1}\times\SO_{2n+2}$.

If $n=l$, we embed $\SO_{2l}$ into $\SO_{2l+1}$ via
\begin{equation}\label{equ emb n=l}
   \left(\begin{matrix}
A & B \\
C & D
\end{matrix}\right)
\mapsto
M^{-1}
\left(\begin{matrix}
A & & B \\
 & 1 & \\
C & & D  
\end{matrix}\right)
M, 
\end{equation}
where $A, B, C,$ and $D$ are $l\times l$ matrices and
$
M=
\mathrm{diag}(I_{l-1},\tilde{M}, I_{l-1}),
$
where 
$$
\tilde{M}=\left(\begin{matrix}
    \frac{1}{4} & \frac{1}{2} & \frac{-1}{2} \\
    \frac{1}{2} & 0 & 1 \\
    \frac{-1}{2} & 1 & 1
    \end{matrix}\right).
$$
Note that the embedding takes the torus $T_{\SO_{2l}}$ to a torus in $\SO_{2l+1}$, but not the standard one consisting of diagonal matrices. Indeed, the embedding sends $t=\mathrm{diag}(t_1,\dots, t_l, t_l\inv,\dots,t_1\inv)$ to
$$
\mathrm{diag}(s,\left(\begin{matrix}
\frac{1}{2}+\frac{1}{4}(t_l+t_l\inv) & \frac{1}{2}(t_l-t_l\inv) & 2(\frac{1}{2}-\frac{1}{4}(t_l+t_l\inv)) \\
\frac{1}{4}(t_l-t_l\inv) & \frac{1}{2}(t_l+t_l\inv) & \frac{-1}{2}(t_l-t_l\inv) \\
\frac{1}{2}(\frac{1}{2}-\frac{1}{4}(t_l+t_l\inv)) & \frac{-1}{4}(t_l-t_l\inv) & \frac{1}{2}+\frac{1}{4}(t_l+t_l\inv)
\end{matrix}\right), s^*),
$$
where $s=\mathrm{diag}(t_1, t_2, \dots, t_{l-1})$ and $s^*=\mathrm{diag}(t_{l-1}\inv, \dots, t_2\inv, t_{1}\inv)$.

Next, we define generic characters and generic representations. Recall that $U_{\GL_n}$ and $U_{\SO_{2l}}$ are the subgroups of upper triangular unipotent matrices in $\GL_n$ and $\SO_{2l}$ respectively and that we fixed an additive nontrivial character $\psi$ of $\mathbb{F}_q$. We define a generic character, which by abuse of notation we still denote by $\psi$, on $U_{\GL_n}$ and $U_{\SO_{2l}}$. For $u=(u_{i,j})_{i,j=1}^n\in U_{\GL_n}$, we set $\psi(u)=\psi\left(\sum_{i=1}^{l-1} u_{i,i+1}\right).$ For $u=(u_{i,j})_{i,j=1}^l\in U_{\SO_{2l}}$ we set 
$$
\psi(u)= 
   \psi\left(\sum_{i=1}^{l-2} u_{i,i+1}+\frac{1}{4}u_{l-1,l}-\frac{1}{2} u_{l-1,l+1}  \right).
$$
We say an irreducible representation $\pi$ of $\SO_{2l}$ is $\psi$-generic if $$\mathrm{Hom}_{U_{\SO_{2l}}}(\pi,\psi)\neq 0.$$
Similarly, we say an irreducible representation $\tau$ of $\GL_{n}$ is $\psi$-generic if $$\mathrm{Hom}_{U_{\GL_{n}}}(\tau,\psi)\neq 0.$$
A nonzero intertwining operator in these spaces is called a Whittaker functional and it is well known that Whittaker functionals are unique up to scalars (by uniqueness of Whittaker models). 

Fix a nonzero Whittaker functional $\Gamma\in\mathrm{Hom}_{U_{\SO_{2l}}}(\pi,\psi)$. For $v\in\pi$, let $W_v(g)=\Gamma(\pi(g)v)$ for any $g\in\SO_{2l}$ and set $\mathcal{W}(\pi,\psi)=\{W_v \, | \, v\in\pi\}.$ $\mathcal{W}(\pi,\psi)$ is called the $\psi$-Whittaker model of $\pi.$ By Frobenius reciprocity, we have that $\mathrm{Hom}_{U_{\SO_{2l}}}(\pi,\psi)\cong\mathrm{Hom}_{\SO_{2l}}(\pi,\mathrm{Ind}_{U_{\SO_{2l}}}^{\SO_{2l}}(\psi)).$ Thus, $\pi$ can be realized as a subrepresentation of $\mathrm{Ind}_{U_{\SO_{2l}}}^{\SO_{2l}}(\psi)$ via the map $\pi \rightarrow \mathcal{W}(\pi,\psi)$ given by $v\mapsto W_v.$ Moreover, by uniqueness of Whittaker models, this subrepresentation occurs with multiplicity one inside $\mathrm{Ind}_{U_{\SO_{2l}}}^{\SO_{2l}}(\psi)$. We also note that the analogous results hold for $\psi$-generic representations $\tau$ of $\GL_n.$

Let  $Q_n=L_n V_n$ be the standard Siegel parabolic subgroup of $\SO_{2n+1}$ with Levi subgroup $L_n\cong \GL_n.$ For $a\in\GL_n$ we let $l_n(a)=\mathrm{diag}(a,1,a^*)\in L_n$ where $a^*=J_n{}^ta^{-1}J_n.$ Let $\tau$ be an irreducible generic representation of $\GL_{n}$ and set $I(\tau)=\mathrm{Ind}_{Q_n}^{\SO_{2n+1}}\tau.$ An element $\xi\in I(\tau)$ is a function $\xi:\SO_{2n+1}\rightarrow\tau$ satisfying $$
\xi(l_n(a)ug)=\tau(a)(\xi(g)), \forall a\in\GL_n, u\in V_n, g\in\SO_{2n+1}.
$$
Let $\Lambda_\tau \in \mathrm{Hom}_{U_{\GL_{n}}}(\tau,\psi^{-1})$ be a fixed nonzero homomorphism. For $\xi\in I (\tau)$, we define the function $f_\xi : \SO_{2n+1}\times\GL_n \rightarrow \C$ by $$
f_\xi(g,a)=\Lambda_\tau(\tau(a)\xi(g)).
$$
Let $I(\tau,\psi^{-1})$ be the space of functions generated by $f_\xi, \xi\in I(\tau).$ Note that for $f\in I(\tau,\psi^{-1}),$ we have 
$$
f(g,ua)=\psi^{-1}(u)f(g,a), \forall g\in\SO_{2n+1}, u\in U_{\GL_n}, a\in\GL_n.
$$
We  also let $\tau^*$ be the contragredient representation of $\GL_n$ defined by $\tau^*(a)=\tau(a^*).$

\section{Multiplicity one theorems and the gamma factor}

The goal of this section is to show that Bessel models for split even special orthogonal groups over finite fields are unique. Our primary reference for this is \cite{K15}, but the setup there is local. Note that the notation of this section (introduced below) agrees with the notation of \cite{GGP12b} with $H=N^{l-n}\times \SO_{2n+1}$, $\nu=\psi'$ in the case $n<l$ ($\psi'$ is extended to be trivial on the special orthogonal group), and $H=\SO_{2l}$, $\nu=1$ in the case $n=l$ .

We begin by considering the case $n=l.$ In this case, $H$ is $\SO_{2l}$ embedded inside of $\SO_{2l+1}$ as in \eqref{equ emb n=l}. Nonzero intertwining operators in $\mathrm{Hom}_{\SO_{2l}}(\pi,\tau)$ are called Bessel functionals which give Bessel models for $\pi$ (similar to the Whittaker models). 
The following proposition shows that the Bessel models are unique when $n=l$.

\begin{prop}\label{MultOne}
Let $Q=LV$ be a parabolic subgroup of $\SO_{2l+1}$ and $\sigma$ be an irreducible representation of $L.$ Let $\pi$ be an irreducible cuspidal represenation of $\SO_{2l}$ and $\tau=\mathrm{Ind}_Q^{\SO_{2l+1}} \sigma$. Then
$$
\mathrm{dim} \, \mathrm{Hom}_{\SO_{2l}}(\pi,\tau)\leq 1.
$$
\end{prop}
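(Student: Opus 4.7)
The strategy is to combine the Mackey decomposition of the restricted induced representation with the cuspidality of $\pi$. This is the finite-field analogue of the argument in \cite{K15}, simplified by the fact that over $\mathbb{F}_q$ one has honest direct-sum decompositions of restrictions of induced representations rather than filtrations only. First, by transitivity of parabolic induction, together with the fact that any irreducible representation of a Levi embeds into a parabolic induction from a cuspidal, one may reduce to the case where $Q$ is a maximal parabolic subgroup of $\SO_{2l+1}$ and $\sigma$ is an irreducible cuspidal representation of its Levi.

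Next, by Frobenius reciprocity combined with Mackey's formula applied to the double coset space $Q\backslash \SO_{2l+1}/\SO_{2l}$, one obtains a decomposition
\[
\mathrm{Hom}_{\SO_{2l}}\!\bigl(\pi,\, \mathrm{Ind}_Q^{\SO_{2l+1}} \sigma\bigr) \;\cong\; \bigoplus_{w \,\in\, Q\backslash \SO_{2l+1}/\SO_{2l}} \mathrm{Hom}_{\SO_{2l}\,\cap\, wQw^{-1}}\!\bigl(\pi,\, {}^{w}\!\sigma\bigr),
\]
where ${}^{w}\!\sigma$ denotes the conjugated representation of $wQw^{-1}$ attached to $\sigma$, restricted to the intersection. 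The task then becomes: classify the $\SO_{2l}$-orbits on $\SO_{2l+1}/Q$ coming from the embedding \eqref{equ emb n=l}, and for each orbit analyze the structure of its stabilizer.

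For all but one orbit, the expectation is that the stabilizer $\SO_{2l}\cap wQw^{-1}$ contains the unipotent radical of a proper parabolic subgroup of $\SO_{2l}$; the corresponding Hom space then factors through a nontrivial Jacquet module of $\pi$ and vanishes, since $\pi$ is cuspidal. For the distinguished \emph{open} orbit, the stabilizer turns out to be essentially a copy of a smaller split orthogonal group together with a unipotent piece on which ${}^{w}\!\sigma$ restricts to an explicit character; the one-dimensionality of the remaining Hom space is then the standard GGP branching uniqueness for the pair $(\SO_{2l}, \SO_{2l-1})$, which is a Gelfand pair in the finite-field setting. This matches the parameters $H=\SO_{2l}$, $\nu=1$ in the \cite{GGP12b} framework recalled at the top of the section.

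The main obstacle I anticipate is the explicit orbit analysis: producing representatives of $Q\backslash \SO_{2l+1}/\SO_{2l}$ for each maximal parabolic $Q$ and verifying in every case the assertion about the stabilizer. The conjugation by the matrix $M$ in the embedding \eqref{equ emb n=l} complicates the bookkeeping and must be tracked carefully, especially when $Q$ is adjacent to the Siegel parabolic of $\SO_{2l+1}$, where the outer-automorphism phenomenon for $\SO_{2l}$ (conjugation by $c$) first enters the picture. Once this geometric step is complete, cuspidality sweeps away the non-open orbits and the bound $\dim \mathrm{Hom}_{\SO_{2l}}(\pi,\tau) \le 1$ follows.
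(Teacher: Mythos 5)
Your proposal follows essentially the same route as the paper: the paper reduces to cuspidal $\sigma$ by transitivity of parabolic induction and then invokes \cite[Proposition 5.1]{GGP12a}, whose proof is precisely the Mackey orbit analysis you outline (non-open $\SO_{2l}$-orbits on $\SO_{2l+1}/Q$ are killed by cuspidality of $\pi$, and the open orbit is handled by the multiplicity one theorems of \cite{AGRS, MW12}, or \cite{GPSR}). Two small corrections: the reduction is to cuspidal $\sigma$ rather than to maximal $Q$, and the open-orbit contribution is a general Bessel pair $(\SO_{2l},\SO_{2m+1})$ --- with $\SO_{2m+1}$ the orthogonal factor of the Levi $L$, together with a unipotent group and character, exactly as in your own description of the stabilizer --- not the corank-one pair $(\SO_{2l},\SO_{2l-1})$, so the uniqueness input is the full Bessel-model multiplicity one of \cite{AGRS, MW12} rather than the basic Gelfand-pair statement.
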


\begin{proof}
First, suppose that $\sigma$ is cuspidal. Then, \cite[Proposition 5.1]{GGP12a} still holds in our case. Indeed, we can follow their proof closely, except we use the multiplicity one results of \cite{AGRS, MW12}. This multiplicity one result was also known earlier in this case \cite{GPSR}. 

Suppose that $\sigma$ is not cuspidal. Then there exists a parabolic subgroup $Q^\prime=L^\prime V^\prime$ of $L$ and cuspidal representation $\sigma^\prime$ of $L'$ such that $\sigma\subseteq \mathrm{Ind}_{Q^\prime}^{L}\sigma^\prime.$ By transitivity of parabolic induction, $\tau\subseteq \mathrm{Ind}_{L^\prime V^\prime V}^{\SO_{2l+1}}\sigma^\prime\otimes 1_{ V}$ and hence the proposition follows from the case when $\sigma$ was cuspidal. 
\end{proof}

Next, we consider the case $n<l.$ We begin by defining the unipotent subgroup $N^{l-n}\subseteq \SO_{2l}$. Consider the standard parabolic subgroup $P_{l-n-1}=M_{l-n-1}N_{l-n-1}$ of $\SO_{2l}$ with Levi subgroup $M_{l-n-1}\cong\GL_{l-n-1}\times\SO_{2n+2}.$ Through this isomorphism, we embed $U_{\GL_{l-n-1}}$ inside of $\SO_{2l}$. We define $N^{l-n}=U_{\GL_{l-n-1}}N_{l-n-1}.$ That is,

$$
N^{l-n}=\left\{\left(\begin{matrix}
u_1 & v_1 & v_2 \\
 & I_{2n+2} & v_1^\prime \\
 & & u_1^*
\end{matrix}\right) \in \SO_{2l} \, | \, u_1\in U_{\GL_{l-n-1}} \right\}.
$$
For $v=(v_{i,j})\in N^{l-n}$ we define a character $\psi'$ of $N^{l-n}$ by
$$
\psi'(v)=
   \psi\left(\sum_{i=1}^{l-n-2} v_{i,i+1}+\frac{1}{4}v_{l-n-1,l}-\frac{1}{2} v_{l-n-1,l+1}   \right).
$$

Note that this character is trivial when $n=l-1.$ Let $H=\SO_{2n+1}N^{l-n}$, where $\SO_{2n+1}$ is realized via the embedding into $\SO_{2n+2}$ inside $M_{l-n-1}$. Extend $\psi'$ trivially across $\SO_{2n+1}$ so that $\psi'$ is a character of $H$. 
Nonzero intertwining operators in $\mathrm{Hom}_{H}(\pi,\tau\otimes \psi')$ are called Bessel functionals which give Bessel models for $\pi$. 
The below proposition gives the uniqueness of Bessel models for $n<l.$

\begin{prop}\label{Mult1}
Let $Q=LV$ be a parabolic subgroup of $\SO_{2n+1}$ and $\sigma$ be an irreducible representation of the Levi subgroup $L$. Let $\pi$ be an irreducible cuspidal representation of $\SO_{2l}$ and $\tau=\mathrm{Ind}_Q^{\SO_{2n+1}} \sigma.$ Then,
$$
\mathrm{dim}\, \mathrm{Hom}_{H}(\pi,\tau\otimes \psi')\leq 1.
$$
\end{prop}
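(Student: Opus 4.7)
My plan is to follow exactly the strategy used in the proof of Proposition \ref{MultOne}: first handle the case when $\sigma$ is cuspidal by adapting the argument of \cite[Proposition 5.1]{GGP12a} to the finite-field setting, and then reduce the general case to the cuspidal one by transitivity of parabolic induction.

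For the reduction, suppose $\sigma$ is not cuspidal. Choose a proper parabolic $Q' = L'V'$ of $L$ and an irreducible cuspidal representation $\sigma'$ of $L'$ with $\sigma \hookrightarrow \mathrm{Ind}_{Q'}^{L} \sigma'$. Transitivity of parabolic induction then yields
$$\tau = \mathrm{Ind}_Q^{\SO_{2n+1}} \sigma \hookrightarrow \mathrm{Ind}_{L'V'V}^{\SO_{2n+1}} (\sigma' \otimes 1_V).$$
The cuspidal case, applied with $L'V'V$ in place of $Q$ and $\sigma'$ in place of $\sigma$, bounds the dimension of $\mathrm{Hom}_H(\pi, - \otimes \psi')$ of the right-hand side by one, and the inclusion of $\tau$ transfers this bound to $\tau$ itself (using left exactness of $\mathrm{Hom}_H(\pi, -\otimes\psi')$).

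For the cuspidal case, I would transplant the proof of \cite[Proposition 5.1]{GGP12a} from the local to the finite-field setting. The crucial external input is the finite-field multiplicity one theorem for Bessel models on orthogonal pairs, supplied by \cite{AGRS, MW12} for the pair $(\SO_{2l}, \SO_{2n+1})$ with Bessel data $(N^{l-n}, \psi')$. Concretely, one decomposes the restriction of $\tau$ along the Bruhat/Mackey filtration associated to $Q \subseteq \SO_{2n+1}$ and the subgroup $H \subseteq \SO_{2l}$, uses cuspidality of $\pi$ to show that non-open contributions vanish, and invokes the cited multiplicity one theorem on the remaining open piece to obtain the bound of $1$.

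The technical heart, and the step I expect to be the main obstacle, is precisely this cuspidal case. Unlike the $n=l$ situation treated in Proposition \ref{MultOne}, here the unipotent $N^{l-n}$ is nontrivial and carries the character $\psi'$ (nontrivial when $n < l-1$), so the orbit-by-orbit analysis must carefully track $\psi'$ throughout. The key point to check is that on each non-open double coset, either cuspidality of $\pi$ kills the contribution via a Jacquet module of $\pi$ along a proper parabolic of $\SO_{2l}$, or the restriction of $\psi'$ to the relevant unipotent subgroup is nontrivial and forces vanishing directly. Once this orbit analysis is in place, the finite-field analogue of \cite[Proposition 5.1]{GGP12a} goes through and the proof is complete.
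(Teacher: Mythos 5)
Your proposal is correct and follows essentially the same route as the paper: reduce to cuspidal $\sigma$ by transitivity of parabolic induction, then adapt the Gan--Gross--Prasad reduction to the multiplicity one theorems of \cite{AGRS, MW12}. The only difference is that the paper invokes the special orthogonal analogue of \cite[Proposition 5.3]{GGP12a} (the general Bessel case with nontrivial unipotent $N^{l-n}$ and character $\psi'$) rather than Proposition 5.1, so the orbit-by-orbit tracking of $\psi'$ that you flag as the main obstacle is already packaged there and need not be re-derived.
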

\begin{proof}
The proof is the same as Proposition \ref{MultOne}, except instead of using the special orthogonal analogue of \cite[Proposition 5.1]{GGP12a}, we use the special orthogonal analogue of \cite[Proposition 5.3]{GGP12a}. Again, we also must substitute the appropriate multiplicity one results which follow from \cite{AGRS, MW12}.
 \end{proof}

\subsection{The Zeta Integrals}

Let $\pi$ be an irreducible $\psi$-generic cuspidal representation of $\SO_{2l}$ and $\tau$ be a generic representation of $\GL_n.$ Let $W\in\mathcal{W}(\pi,\psi)$ and $f\in I(\tau,\psi^{-1})$. Next, we shall define the zeta ``integrals" $\Psi(W,f)$ analogous to the local integrals of \cite{K15}. Note that \cite{K15} defines integrals for any $n$ and $l$; however, we  only need the case of $n\leq l$ for the converse theorem and so we do not consider the case of $n>l$. These integrals lie in the Bessel models and the uniqueness of these models give rise to the definition of the $\gamma$-factors.

First, suppose that $n=l.$ Then we define
$$
\Psi(W,f)=\sum_{g\in U_{\SO_{2l}}\setminus\SO_{2l}} W(g)f(w_{l,l}g, I_{l}),
$$
where 
$$
w_{l,l}=\left(\begin{matrix}
 \frac{1}{2}\cdot I_{l} & &  \\
  & 1 & \\
  & & 2\cdot I_l
\end{matrix}
\right)\in\SO_{2l+1}.$$
The integral satisfies the property $\Psi(g\cdot W,g\cdot f)=\Psi(W,f)$ for any $g\in\SO_{2l}.$

Next, suppose that $n<l.$ Then we define

$$
\Psi(W,f)=\sum_{g\in U_{\SO_{2n+1}}\setminus\SO_{2n+1}} \left(\sum_{r\in R^{l,n}} W(r w^{l,n} g (w^{l,n})\inv)\right)   f(g, I_{l}),
$$
where
$$
w^{l,n}=\left(\begin{matrix}
 & I_n & & & \\
 I_{l-n-1} & & & & \\
 & & I_2 & & \\
 & & & & I_{l-n-1} \\
 & & & I_n &
\end{matrix}
\right)\in\SO_{2l},$$
and
$$
R^{l,n}=\left\{\left(\begin{matrix}
I_n & & & & \\
x & I_{l-n-1} & & & \\
& & I_2 & & \\
& & & I_{l-n-1} & \\
& & & x^\prime & I_n
\end{matrix}\right)\in\SO_{2l}\right\}.
$$
The integral satisfies the property $\Psi((gn)\cdot W,g\cdot f)=(\psi')\inv(n)\Psi(W,f)$ for any $g\in\SO_{2n+1}$ and $n\in N^{l-n}.$ Note that in the case $n<l$, our integral differs from \cite{K15} slightly. The difference is a right translation of the Whittaker function by $(w^{l,n})\inv.$ 

Let $n\leq l.$ We define an intertwining operator $M(\tau,\psi^{-1}): I(\tau,\psi^{-1})\rightarrow I(\tau^*,\psi^{-1})$ by 
$$
M(\tau,\psi^{-1})f(h,a)=\sum_{u\in V_n} f(w_n u h, d_n a^*),
$$
where $w_n=\left(\begin{matrix}
 & & I_n \\
 & (-1)^n & \\
 I_n & &
\end{matrix}\right)$, $d_n=\mathrm{diag}(-1,1,-1,\dots,(-1)^n)\in\GL_n$, $a^*=J_n {}^t a^{-1} J_n$, and $V_n$ is the unipotent radical of the standard Siegel parabolic subgroup $Q_n=L_n V_n$ in $\SO_{2n+1}$ with $L_n\cong \GL_n$.

The following proposition gives the definition of the $\gamma$-factor.

\begin{prop}\label{gammafactor}
Let $\pi$ be an irreducible $\psi$-generic cuspidal representation of $\SO_{2l}$, $\tau$ be a generic representation of $\GL_n$, $W\in\mathcal{W}(\pi,\psi)$, and $f\in I(\tau,\psi^{-1})$. Then there exists a complex constant $\gamma(\pi\times\tau, \psi)$, called the $\gamma$-factor of $\pi$ and $\tau$, such that
$$
\gamma(\pi\times\tau, \psi)\Psi(W,f)=\Psi(W,M(\tau,\psi^{-1})f).
$$
\end{prop}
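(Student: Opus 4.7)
The plan is to invoke the uniqueness of Bessel models (Propositions \ref{MultOne} and \ref{Mult1}) by realizing both sides of the identity as bilinear forms on $\mathcal{W}(\pi,\psi) \otimes I(\tau,\psi^{-1})$ that lie in a common Hom space of dimension at most one, and then read off the proportionality constant $\gamma(\pi \times \tau, \psi)$.

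First I would show that $\Lambda(W,f) := \Psi(W,f)$ lies in a one-dimensional Bessel Hom space. In the case $n = l$, the invariance $\Psi(g \cdot W, g \cdot f) = \Psi(W,f)$ for $g \in \SO_{2l}$ exhibits $\Lambda$ as an $\SO_{2l}$-invariant functional on $\pi \otimes I(\tau)$, which corresponds via Frobenius reciprocity and contragredient duality (using that the contragredient of an induced representation is the induction of the contragredient) to an element of $\mathrm{Hom}_{\SO_{2l}}(\pi, \mathrm{Ind}_{Q_l}^{\SO_{2l+1}}(\tau^\vee))$; this has dimension at most one by Proposition \ref{MultOne} applied with $\sigma = \tau^\vee$. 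In the case $n < l$, the stated transformation law analogously gives an element of $\mathrm{Hom}_H(\pi, \mathrm{Ind}_{Q_n}^{\SO_{2n+1}}(\tau^\vee) \otimes (\psi')^{-1})$, which is at most one-dimensional by Proposition \ref{Mult1}.

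Next I would verify that $\Lambda'(W,f) := \Psi(W, M(\tau, \psi^{-1}) f)$ lies in the same Hom space. The key point is that $M(\tau, \psi^{-1})$ is an intertwining operator from $I(\tau, \psi^{-1})$ to $I(\tau^*, \psi^{-1})$ with respect to the action of $\SO_{2l+1}$ (resp.\ $\SO_{2n+1}$). Since the zeta integral defined with $\tau^*$ in place of $\tau$ satisfies exactly the same equivariance as $\Psi$, the composition $\Lambda'$ inherits the equivariance of $\Psi$ and so lies in the same at-most-one-dimensional space. Proportionality then yields a scalar $\gamma(\pi \times \tau, \psi)$ with $\Lambda' = \gamma(\pi \times \tau, \psi)\Lambda$; if $\Lambda \equiv 0$, then $\Lambda' \equiv 0$ as well and $\gamma(\pi \times \tau, \psi)$ may be chosen arbitrarily.

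The main obstacle is the careful verification that $\Lambda'$ has the stated equivariance, especially in the case $n < l$, where the right translation by $(w^{l,n})^{-1}$ and the twisting character $\psi'$ on $N^{l-n}$ must both be tracked through the intertwining operator $M(\tau,\psi^{-1})$. This ultimately reduces to a change-of-variables argument using the explicit form of the Weyl element $w_n$, the diagonal element $d_n$, and the embeddings of $\SO_{2n+1}$ and $N^{l-n}$ into $\SO_{2l}$ recalled in Section 2.
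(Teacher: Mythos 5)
Your proposal is correct and is precisely the argument behind the paper's one-line proof, which simply cites Propositions \ref{MultOne} and \ref{Mult1}: both $(W,f)\mapsto\Psi(W,f)$ and $(W,f)\mapsto\Psi(W,M(\tau,\psi^{-1})f)$ define elements of the same at-most-one-dimensional Bessel Hom space, hence are proportional. The only point worth flagging is that for the scalar to be genuinely forced (rather than chosen arbitrarily) one needs $\Psi(W,f)\not\equiv 0$, which the paper establishes only later (Propositions \ref{GLnnonzero}, \ref{GL_{l-1}nonzero}, \ref{GL_lnonzero}); your fallback clause for $\Lambda\equiv 0$ would not actually produce a valid $\gamma$ if $\Lambda'$ were nonzero, but this situation never arises.
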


\begin{proof}
This is immediate from Propositions \ref{MultOne} and \ref{Mult1}.
 \end{proof}

We refer to these integrals and $\gamma$-factors as the twists by $\GL_n.$

\section{Bessel Functions}\label{bessel functions}

In this section, we define Bessel functions and study their properties. The Bessel functions are particular functions in the Whittaker model of a $\psi$-generic representation. The analysis of the zeta integrals evaluated on Bessel functions is crucial in our proof of the converse theorem.

Consider $\pi$ as a representation of $B_{\SO_{2l}}$. Let $\pi(U_{\SO_{2l}}, \psi)$ be the subspace of $\pi$ generated by $$\{\pi(u)v-\psi(u)v \, | \, u\in U_{\SO_{2l}}, \, v\in\pi\},$$ and let $\pi_{U_{\SO_{2l}}, \psi}=\pi/\pi(U_{\SO_{2l}}, \psi)$ be the twisted Jacquet module. Since $\pi$ is an irreducible $\psi$-generic representation, $\mathrm{dim}(\pi_{U_{\SO_{2l}}, \psi})=1.$ Let $v\in\pi$ with $v\notin \pi(U_{\SO_{2l}}, \psi)$ and set $$
v_0=\frac{1}{|U_{\SO_{2l}}|}\sum_{u\in U_{\SO_{2l}}}\psi^{-1}(u)\pi(u)v.
$$
By the Jacquet-Langlands lemma \cite[Lemma 2.33]{BZ76}, $v_0\neq 0.$ Recall that we fixed a nonzero Whittaker functional  $\Gamma\in\mathrm{Hom}_{U_{\SO_{2l}}}(\pi,\psi)$. Then we have $\Gamma(v_0)\neq 0$ and by construction $\pi(u)v_0=\psi(u)v_0$ for any $u\in U_{\SO_{2l}}.$ Any such vector is called a Whittaker vector of $\pi.$
For $g\in\SO_{2l}$, set $$\mathcal{B}_{\pi,\psi}(g)=\frac{\Gamma(\pi(g)v_o)}{\Gamma(v_0)}.$$ $\mathcal{B}_{\pi,\psi}$ is called the normalized Bessel function for $\pi$ and it is simple to verify that $\mathcal{B}_{\pi,\psi}\in\mathcal{W}(\pi,\psi).$ The following proposition follows from the definitions.

\begin{prop}\label{Besselprop}
$\mathcal{B}_{\pi,\psi}(I_{2l})=1$ and $\mathcal{B}_{\pi,\psi}(u_1gu_2)=\psi(u_1u_2)\mathcal{B}_{\pi,\psi}(g)$ for any $g\in\SO_{2l}$ and any $u_1,u_2\in U_{\SO_{2l}}.$
\end{prop}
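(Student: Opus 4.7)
The plan is to verify both properties directly from the definition $\mathcal{B}_{\pi,\psi}(g) = \Gamma(\pi(g)v_0)/\Gamma(v_0)$, using only (i) that $\Gamma$ is a $(U_{\SO_{2l}},\psi)$-equivariant functional and (ii) that $v_0$ is a Whittaker vector, i.e.\ $\pi(u)v_0 = \psi(u)v_0$ for every $u \in U_{\SO_{2l}}$. No substantive obstacle is expected; this is a formal computation recording the two equivariance properties already built into the construction of $v_0$ and $\Gamma$.

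First I would handle the normalization: plugging $g = I_{2l}$ into the defining formula gives $\mathcal{B}_{\pi,\psi}(I_{2l}) = \Gamma(\pi(I_{2l})v_0)/\Gamma(v_0) = \Gamma(v_0)/\Gamma(v_0) = 1$, which is well-defined precisely because the Jacquet--Langlands lemma guarantees $v_0 \neq 0$ and hence $\Gamma(v_0) \neq 0$ (the latter noted explicitly in the paragraph preceding the proposition).

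Next I would derive the bi-equivariance. For $u_1, u_2 \in U_{\SO_{2l}}$ and $g \in \SO_{2l}$, apply $\pi$ as a representation homomorphism to decompose $\pi(u_1 g u_2) = \pi(u_1)\pi(g)\pi(u_2)$. Using $\pi(u_2)v_0 = \psi(u_2)v_0$ we obtain
\[
\Gamma(\pi(u_1 g u_2)v_0) = \psi(u_2)\,\Gamma(\pi(u_1)\pi(g)v_0),
\]
and then using the $(U_{\SO_{2l}},\psi)$-equivariance of $\Gamma$ applied to the vector $\pi(g)v_0 \in \pi$ we get
\[
\Gamma(\pi(u_1)\pi(g)v_0) = \psi(u_1)\,\Gamma(\pi(g)v_0).
\]
Dividing by $\Gamma(v_0)$ and using that $\psi$ is a character, so $\psi(u_1)\psi(u_2) = \psi(u_1 u_2)$ (noting $u_1, u_2 \in U_{\SO_{2l}}$ and $\psi$ restricted to a character there), yields the claimed identity
\[
\mathcal{B}_{\pi,\psi}(u_1 g u_2) = \psi(u_1 u_2)\mathcal{B}_{\pi,\psi}(g).
\]
No genuine difficulty arises; the proposition simply records how the two defining equivariance properties of $v_0$ and $\Gamma$ translate into left/right transformation rules for $\mathcal{B}_{\pi,\psi}$ as a function on the group.
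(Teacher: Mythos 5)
Your proof is correct and is exactly the computation the paper has in mind: the paper simply asserts that the proposition "follows from the definitions," and your argument spells out that verification using the Whittaker-vector property of $v_0$ and the $(U_{\SO_{2l}},\psi)$-equivariance of $\Gamma$. No discrepancy with the paper's approach.
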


Let $W(\SO_{2l})$ be the Weyl group of $\SO_{2l}$ and let $\Delta(\SO_{2l})$ be the set of simple roots. We say that a Weyl element $w\in W(\SO_{2l})$ {\it supports Bessel functions} if for any $\alpha\in\Delta(\SO_{2l})$, $w\alpha$ is either negative or simple. We let B($\SO_{2l}$) denote the set of Weyl elements which support Bessel functions. We call B$(\SO_{2l})$ the Bessel support. Recall $B_{\SO_{2l}}=T_{\SO_{2l}} U_{\SO_{2l}}$ is a fixed Borel subgroup of $\SO_{2l}.$ The following lemma justifies our terminology. 

\begin{lemma}
Let $\pi$ be an irreducible $\psi$-generic representation of $\SO_{2l}$ with Bessel function $\mathcal{B}_{\pi,\psi}$. Then, for any $w\in W(\SO_{2l})\setminus\mathrm{B}(\SO_{2l}),$ we have $\mathcal{B}_{\pi,\psi}(g)=0,$ for any $g\in B_{\SO_{2l}} w B_{\SO_{2l}}.$
\end{lemma}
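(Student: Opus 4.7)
The plan is to exploit the two-sided transformation property $\mathcal{B}_{\pi,\psi}(u_1 g u_2) = \psi(u_1 u_2)\mathcal{B}_{\pi,\psi}(g)$ from Proposition \ref{Besselprop} together with the Bruhat decomposition, forcing $\mathcal{B}_{\pi,\psi}(g) = 0$ whenever $w$ fails to support Bessel functions. First I would unpack the hypothesis: if $w \notin \mathrm{B}(\SO_{2l})$, there exists a simple root $\alpha \in \Delta(\SO_{2l})$ such that $w\alpha$ is positive but not simple. Writing $U_\alpha$ for the corresponding one-parameter root subgroup, the assumption $w\alpha > 0$ guarantees that $w U_\alpha w^{-1} = U_{w\alpha} \subset U_{\SO_{2l}}$.

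Next I would reduce to a torus-times-$w$ representative. Given $g \in B_{\SO_{2l}} w B_{\SO_{2l}}$, I would use the standard form $g = u_1 t w u_2$ with $u_i \in U_{\SO_{2l}}$ and $t \in T_{\SO_{2l}}$ (absorbing the right-hand torus factor through $w$, since $w T w^{-1} = T$). By Proposition \ref{Besselprop} one has $\mathcal{B}_{\pi,\psi}(g) = \psi(u_1 u_2) \mathcal{B}_{\pi,\psi}(tw)$, so it suffices to prove $\mathcal{B}_{\pi,\psi}(tw) = 0$.

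The heart of the argument is to compute $\mathcal{B}_{\pi,\psi}(tw\cdot x)$ for $x \in U_\alpha$ in two ways. The right-translation rule gives $\mathcal{B}_{\pi,\psi}(tw x) = \psi(x)\mathcal{B}_{\pi,\psi}(tw)$. On the other hand, conjugating yields $tw x = t(wxw^{-1})w = (t y t^{-1}) \cdot tw$ where $y = wxw^{-1} \in U_{w\alpha}$, so the left-translation rule gives $\mathcal{B}_{\pi,\psi}(twx) = \psi(tyt^{-1})\mathcal{B}_{\pi,\psi}(tw)$. Since $T$ normalizes $U_{w\alpha}$, the element $tyt^{-1}$ still lies in $U_{w\alpha}$; and because $w\alpha$ is a positive but non-simple root, $\psi$ is trivial on $U_{w\alpha}$ (the defining sum for $\psi$ only involves the matrix entries attached to the simple root subgroups). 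Therefore $\psi(tyt^{-1}) = 1$, and equating the two expressions gives $\psi(x)\mathcal{B}_{\pi,\psi}(tw) = \mathcal{B}_{\pi,\psi}(tw)$ for every $x \in U_\alpha$. Since $\alpha$ is simple, $\psi|_{U_\alpha}$ is nontrivial; choosing $x$ with $\psi(x) \neq 1$ forces $\mathcal{B}_{\pi,\psi}(tw) = 0$.

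The main obstacle is a bookkeeping check specific to $\SO_{2l}$: one must verify that the generic character displayed in Section 2 is genuinely nontrivial on each simple root subgroup and trivial on every non-simple positive root subgroup. For $\alpha_i = e_i - e_{i+1}$ with $1 \leq i \leq l-1$ this is immediate from the summands $u_{i,i+1}$, and for $\alpha_l = e_{l-1} + e_l$ it follows from the summand $-\tfrac{1}{2}u_{l-1,l+1}$; the two root subgroups $U_{\alpha_{l-1}}$ and $U_{\alpha_l}$ meet the row $l-1$ in the two distinct entries that appear in $\psi$, so nontriviality on each is preserved. All other matrix entries hit by $\psi$ lie strictly in the simple-root locus, so $\psi$ vanishes on every non-simple positive root subgroup. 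Once this is confirmed, the argument above is purely formal root-subgroup manipulation.
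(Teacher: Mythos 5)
Your proposal is correct and follows essentially the same route as the paper's proof: reduce via the Bruhat decomposition and Proposition \ref{Besselprop} to showing $\mathcal{B}_{\pi,\psi}(tw)=0$, then compute $\mathcal{B}_{\pi,\psi}(tw\,\mathbf{x}_\alpha(x))$ two ways using a simple root $\alpha$ with $w\alpha$ positive but not simple, and use nontriviality of $\psi$ on the simple root subgroup together with its triviality on the non-simple positive root subgroup $U_{w\alpha}$. The extra bookkeeping you do for the specific form of the generic character on $\SO_{2l}$ (in particular the $\alpha_{l-1}$, $\alpha_l$ entries) is a sound verification of what the paper asserts more briefly.
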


\begin{proof}
Since $B_{\SO_{2l}}=T_{\SO_{2l}}U_{\SO_{2l}}=U_{\SO_{2l}}T_{\SO_{2l}}$ as a set and by Proposition \ref{Besselprop},  $$\mathcal{B}_{\pi,\psi}(g)=\mathcal{B}_{\pi,\psi}(t_1u_1wt_2u_2)=\mathcal{B}_{\pi,\psi}(u_1^\prime t_1^\prime w t_2 u_2)=\psi(u_1^\prime u_2)\mathcal{B}_{\pi,\psi}(t_1^\prime w t_2),$$ for some $u_1,u_1^\prime,u_2\in U_{\SO_{2l}}$ and $t_1,t_1^\prime, t_2\in T_{\SO_{2l}}.$ Thus it is enough to show that $\mathcal{B}_{\pi,\psi}(t_1 w t_2)=0$ for any $t_1,t_2\in T_{\SO_{2l}}.$ By definition of the Weyl group, $wt_2=t_2^\prime w$ for some $t_2^\prime \in T_{\SO_{2l}}.$ Hence, $\mathcal{B}_{\pi,\psi}(t_1 w t_2)=\mathcal{B}_{\pi,\psi}(t_1 t_2^\prime w)$ and so it is enough to show that $\mathcal{B}_{\pi,\psi}(t w)=0$ for any $t\in T_{\SO_{2l}}.$

Since $w\notin \mathrm{B}(\SO_{2l}),$ there exists $\alpha\in\Delta(\SO_{2l})$ such that $w\alpha$ is positive but not simple. Let $x\in\mathbb{F}_q$ and let $\mathrm{\bold{x}}_\alpha(x)$ be an element in the root space of $\alpha$ (note that the root space of $\alpha$ lies in $U_{\SO_{2l}}$ and is isomorphic to $\mathbb{F}_q$). Then $tw\mathrm{\bold{x}}_\alpha(x) = \mathrm{\bold{x}}_{w\alpha}(x^\prime)tw$ for some $x^\prime\in\mathbb{F}_q$ (because $T_{\SO_{2l}}$ normalizes $U_{\SO_{2l}}$). Since the simple root spaces are exactly the support of $\psi$, $\psi(\mathrm{\bold{x}}_\alpha(x))$ is a nonzero constant multiple of $\psi(x)$ and this constant is independent of $x$. Since $w\alpha$ is positive but not simple, $\psi(\mathrm{\bold{x}}_{w\alpha}(x^\prime)))=\psi(0)=1.$ Thus, by Proposition \ref{Besselprop}, 
$\psi(x)\mathcal{B}_{\pi,\psi}(t w)=\mathcal{B}_{\pi,\psi}(t w)$ 
for any $x\in\mathbb{F}_q$. Since $\psi$ is nontrivial, $\mathcal{B}_{\pi,\psi}(t w)=0.$ This proves the lemma.
 \end{proof}

Next, we determine the support of a Bessel function on the torus.

\begin{lemma}\label{center}
If $t\in T_{\SO_{2l}}$ and $\B_{\pi,\psi}(t)\neq 0,$ then $t$ is in the center of $\SO_{2l}.$
\end{lemma}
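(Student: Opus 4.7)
The plan is to show that any torus element $t$ on which the Bessel function is nonzero must lie in the kernel of every simple root, and then invoke the standard fact that for the semisimple group $\SO_{2l}$ the intersection of the kernels of the simple roots is precisely the center. This mirrors the technique just used in the preceding lemma, but applied now with a simple root $\alpha$ (where $\psi$ is nontrivial on the corresponding root subgroup) in place of a positive non-simple root.

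Concretely, I would fix $t \in T_{\SO_{2l}}$ with $\B_{\pi,\psi}(t) \neq 0$ and, for each $\alpha \in \Delta(\SO_{2l})$, parameterize the simple root subgroup by $\mathrm{\bold{x}}_\alpha : \mathbb{F}_q \to U_{\SO_{2l}}$. From the explicit formula for the generic character $\psi$ on $U_{\SO_{2l}}$ given in Section 2, the restriction of $\psi$ to each simple root subgroup is of the form $\psi(\mathrm{\bold{x}}_\alpha(x)) = \psi(c_\alpha x)$ with $c_\alpha \in \mathbb{F}_q^\times$; the whole point of $\psi$ being a generic character is precisely that none of these coefficients vanish.

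Then I would compute $\B_{\pi,\psi}(\mathrm{\bold{x}}_\alpha(x)\, t)$ in two ways. Using the left equivariance in Proposition \ref{Besselprop} directly gives $\psi(c_\alpha x)\,\B_{\pi,\psi}(t)$. On the other hand, since the torus normalizes every root subgroup via $t^{-1}\mathrm{\bold{x}}_\alpha(x)t = \mathrm{\bold{x}}_\alpha(\alpha(t)^{-1} x)$, I may rewrite
\[
\mathrm{\bold{x}}_\alpha(x)\, t = t \cdot \mathrm{\bold{x}}_\alpha(\alpha(t)^{-1} x),
\]
and the right equivariance in Proposition \ref{Besselprop} then yields $\psi(c_\alpha \alpha(t)^{-1} x)\,\B_{\pi,\psi}(t)$. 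Equating the two expressions and dividing by the nonzero scalar $\B_{\pi,\psi}(t)$ produces $\psi\bigl(c_\alpha(1 - \alpha(t)^{-1}) x\bigr) = 1$ for all $x \in \mathbb{F}_q$. Nontriviality of $\psi$ and the fact that $c_\alpha \neq 0$ force $\alpha(t) = 1$.

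Since this holds for every simple root, $t$ lies in the intersection of the kernels of all simple roots of $\SO_{2l}$, which (by standard structure theory, the simple roots generating the root lattice of this semisimple group) is exactly the center. I do not expect any real obstacle here: the only point that requires attention is the explicit verification that the coefficient $c_\alpha$ is nonzero for each simple $\alpha$ in the particular realization of $U_{\SO_{2l}}$ and the particular formula for $\psi$ used in this paper, including the two simple roots $e_{l-1} \pm e_l$ corresponding to the $\tfrac14 u_{l-1,l}$ and $-\tfrac12 u_{l-1,l+1}$ terms. This is a direct inspection of the character formula recorded in Section 2.
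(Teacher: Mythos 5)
Your proof is correct and is essentially the same as the paper's: both conjugate a simple root subgroup element across $t$, compare the two resulting $\psi$-factors via the equivariance of $\mathcal{B}_{\pi,\psi}$, and conclude $\alpha(t)=1$ for every simple root $\alpha$, hence $t$ is central. The point you flag about checking the nonvanishing of the coefficients $c_\alpha$ is exactly the genericity of $\psi$ that the paper also invokes.
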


\begin{proof}
Let $t\in T_{\SO_{2l}}$ such that $\B_{\pi,\psi}(t)\neq 0$ and let $\delta$ be any simple root of $\SO_{2l}.$ For $x\in\mathbb{F}_q$, let $\bold{x}_\delta(x)$ be in the root subgroup of $\delta$. Then $t \bold{x}_\delta(x) = \bold{x}_\delta(\delta(t) x) t.$ Hence, by Proposition \ref{Besselprop}, $$
\psi(\bold{x}_\delta(x))\B_{\pi,\psi}(t) = \psi(\bold{x}_\delta(\delta(t)x))\B_{\pi,\psi}(t).
$$
Thus, $\psi(\bold{x}_\delta(x))= \psi(\bold{x}_\delta(\delta(t)x))$ for any $x\in\mathbb{F}_q$ and simple root $\delta\in\Delta(\SO_{2l}).$ Since $\psi$ is nontrivial and $x$ is arbitrary, we must have $\delta(t)=1$ for all $\delta\in\Delta(\SO_{2l}).$ Thus, $t$ is in the center of $\SO_{2l}$ which proves the lemma.
 \end{proof}

\subsection{Bessel Function of the Conjugate representation}

In this section, we determine the normalized Bessel function in the $\psi$-Whittaker model of the conjugate representation $c\cdot\pi.$

Let $\psi_c$ be the character on $U_{\SO_{2l}}$ defined by $\psi_c(u)=\psi(cuc).$ Recall that we fixed a nonzero Whittaker functional $\Gamma\in\mathrm{Hom}_{U_{\SO_{2l}}}(\pi,\psi).$ It follows that $\Gamma\in\mathrm{Hom}_{U_{\SO_{2l}}}(c\cdot\pi,\psi_c)$ and $$\mathcal{B}_{c\cdot\pi,\psi_c}(g)=\frac{\Gamma(c\cdot\pi(g)v_o)}{\Gamma(v_0)}=\mathcal{B}_{\pi,\psi}(cgc)$$ defines the normalized Bessel function for $c\cdot\pi$ in $\mathrm{Ind}_{U_{\SO_{2l}}}^{\SO_{2l}}\psi_c$. On the other hand, $c\cdot\pi$ is also $\psi$-generic, since $\psi_c(\tilde{t}\inv u\tilde{t})=\psi(u)$ for any $u\in U_{\SO_{2l}},$ where
\begin{equation}\label{def tildet}
  \tilde{t}=\mathrm{diag}(I_{l-1},\frac{-1}{2},-2,I_{l-1})\in T_{\SO_{2l}}.  
\end{equation}
Let
$$
\mathcal{B}_{c\cdot\pi, \psi}(g):=\mathcal{B}_{c\cdot\pi, \psi_c}(\tilde{t}\inv g \tilde{t})=
\mathcal{B}_{\pi, \psi}(c\tilde{t}\inv g \tilde{t}c).
$$
Then we have the following. 

\begin{prop}
$\mathcal{B}_{c\cdot\pi, \psi}$ is the normalized Bessel function for $c\cdot\pi$ in $\mathrm{Ind}_{U_{\SO_{2l}}}^{\SO_{2l}}\psi.$
\end{prop}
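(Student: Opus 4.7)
The plan is a clean transport-of-structure argument, moving the already-constructed $\psi_c$-normalized Bessel function $\mathcal{B}_{c\cdot\pi,\psi_c}$ to a $\psi$-normalized Bessel function by twisting with the torus element $\tilde{t}$. The key input is the character identity $\psi_c(\tilde{t}^{-1} u \tilde{t}) = \psi(u)$ for all $u \in U_{\SO_{2l}}$ which is already recorded just before the statement of the proposition and which holds because $\tilde{t} \in T_{\SO_{2l}}$ normalizes $U_{\SO_{2l}}$.

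First I would set up the Whittaker data for $c\cdot\pi$ at the character $\psi$. Starting from the fixed nonzero $\Gamma \in \mathrm{Hom}_{U_{\SO_{2l}}}(c\cdot\pi, \psi_c)$ and the associated $\psi_c$-Whittaker vector $v_0$ (so that $\Gamma(v_0) \neq 0$ and $\mathcal{B}_{c\cdot\pi,\psi_c}(g) = \Gamma(c\cdot\pi(g)v_0)/\Gamma(v_0)$), I would introduce
$$\Gamma'(v) := \Gamma\bigl(c\cdot\pi(\tilde{t}^{-1})v\bigr), \qquad v_0' := c\cdot\pi(\tilde{t}) v_0.$$
A direct computation using the character identity gives
$$\Gamma'\bigl(c\cdot\pi(u)v\bigr) = \Gamma\bigl(c\cdot\pi(\tilde{t}^{-1} u \tilde{t})\, c\cdot\pi(\tilde{t}^{-1})v\bigr) = \psi_c(\tilde{t}^{-1} u \tilde{t})\,\Gamma'(v) = \psi(u)\,\Gamma'(v),$$
so $\Gamma'$ is a nonzero element of $\mathrm{Hom}_{U_{\SO_{2l}}}(c\cdot\pi, \psi)$. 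An analogous calculation shows $c\cdot\pi(u)v_0' = \psi(u)v_0'$, so $v_0'$ is a $\psi$-Whittaker vector, and $\Gamma'(v_0') = \Gamma(v_0) \neq 0$.

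Finally I would compute the normalized Bessel function that the pair $(\Gamma', v_0')$ produces and compare it to the definition in the statement:
$$\frac{\Gamma'(c\cdot\pi(g)v_0')}{\Gamma'(v_0')} = \frac{\Gamma\bigl(c\cdot\pi(\tilde{t}^{-1} g \tilde{t})v_0\bigr)}{\Gamma(v_0)} = \mathcal{B}_{c\cdot\pi,\psi_c}(\tilde{t}^{-1} g \tilde{t}) = \mathcal{B}_{c\cdot\pi,\psi}(g).$$
This identifies $\mathcal{B}_{c\cdot\pi,\psi}$ as the normalized Bessel function of $c\cdot\pi$ sitting inside $\mathrm{Ind}_{U_{\SO_{2l}}}^{\SO_{2l}}\psi$.

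There is no real obstacle: the argument is a formal transport along the intertwining provided by $\tilde{t}$, and the only substantive ingredient is the character identity $\psi_c(\tilde{t}^{-1}\cdot\tilde{t}) = \psi(\cdot)$, which reduces to an entry-by-entry verification on the simple-root subgroups using the explicit formulas for $\psi$, $c$, and $\tilde{t}$. If one preferred, one could bypass the Whittaker functional construction and instead directly verify the two defining properties of a normalized Bessel function, namely $\mathcal{B}_{c\cdot\pi,\psi}(I_{2l}) = 1$ (immediate) and $\mathcal{B}_{c\cdot\pi,\psi}(u_1 g u_2) = \psi(u_1 u_2)\,\mathcal{B}_{c\cdot\pi,\psi}(g)$ (applying Proposition \ref{Besselprop} to $\mathcal{B}_{c\cdot\pi,\psi_c}$ and then again invoking the character identity).
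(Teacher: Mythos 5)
Your argument is correct and follows essentially the same route as the paper: both transport the $\psi_c$-Whittaker data for $c\cdot\pi$ to $\psi$-Whittaker data by twisting with $\tilde{t}$, using the identity $\psi_c(\tilde{t}^{-1}u\tilde{t})=\psi(u)$. The paper verifies the two characterizing properties of Proposition \ref{Besselprop} directly (the alternative you mention at the end), while you exhibit the explicit pair $(\Gamma',v_0')$; these are interchangeable by uniqueness of Whittaker functionals and vectors.
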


\begin{proof}
Let $\Gamma'$ be defined by $\Gamma'(v)=\Gamma(c\cdot\pi(\tilde{t}\inv)v).$ Then,
$\Gamma' \in \mathrm{Hom}_{U_{\SO_{2l}}}(c\cdot\pi,\psi)$. Since $\tilde{t}c=c\tilde{t}\inv$, $\mathcal{B}_{c\cdot\pi, \psi}$ is the right translation by $\tilde{t}\inv$ of a Whittaker function of $c\cdot \pi$ in $\mathrm{Ind}_{U_{\SO_{2l}}}^{\SO_{2l}}\psi$. Thus, $\mathcal{B}_{c\cdot\pi, \psi}$ is a Whittaker function of $c\cdot\pi$.

Next, $\mathcal{B}_{c\cdot\pi, \psi}(I_{2l})=\mathcal{B}_{\pi, \psi}(I_{2l})=1$ and for $u_1, u_2\in U_{\SO_{2l}}$ and $g\in\SO_{2l}$
\begin{align*}
    \mathcal{B}_{c\cdot\pi, \psi}(u_1gu_2)&=\mathcal{B}_{c\cdot\pi, \psi_c}(\tilde{t}\inv u_1 \tilde{t} \tilde{t}\inv g \tilde{t} \tilde{t}\inv u_2 \tilde{t})\\
    &=\psi_c(\tilde{t}\inv u_1 \tilde{t})\psi_c(\tilde{t}\inv u_2 \tilde{t})\mathcal{B}_{c\cdot\pi, \psi_c}(\tilde{t}\inv g\tilde{t})\\
&=\psi(u_1)\psi(u_2)\mathcal{B}_{c\cdot\pi, \psi}(g).
\end{align*}
By Proposition \ref{Besselprop}, $\mathcal{B}_{c\cdot\pi, \psi}$ is the normalized Bessel function of $c\cdot\pi$ which proves the proposition.
 \end{proof}

\subsection{Partition of the Bessel Support}\label{BesselSupport}

In this section we aim to partition the Bessel support and provide some preparation for future calculations. We  define the Weyl elements that appear in the computations of the zeta integrals and then show that these can be used to partition the Bessel support. When $n<l,$ we embed $w_n$ into $\SO_{2l}$ and use the image to define the Weyl elements we are interested in. When $n=l,$ we take a different approach since the embedding is $\SO_{2l}\hookrightarrow\SO_{2l+1}$ and $w_l\in\SO_{2l+1}.$ 

Let $n<l$ and recall that 
$w_n=\left(\begin{matrix}
 & & I_n \\
 & (-1)^n & \\
 I_n & &
\end{matrix}\right) \in\SO_{2n+1}$. Let 
$$
\hat{w}_n=\left\{\begin{array}{cc}
  \left(\begin{matrix}
  I_{l-n-1} & & & & & \\
   & & & & I_n & \\
   & & & 1 &  & & \\
   & & 1 &  & & & \\
   & I_n & & & & & \\
   & & & & & & I_{l-n-1} \\
  \end{matrix}\right)  & \mathrm{if} \, n \, \mathrm{is} \,  \mathrm{odd,} \\
   \left(\begin{matrix}
  I_{l-n-1} & & & & & \\
   & & & & I_n & \\
   & & 1 &  &  & & \\
   & &  & 1 & & & \\
   & I_n & & & & & \\
   & & & & & & I_{l-n-1} \\
  \end{matrix}\right)  & \mathrm{if} \, n \, \mathrm{is} \,  \mathrm{even.} \\
\end{array}\right. 
$$
Then the image of $w_n$ in $\SO_{2l}$ is $t_n^\prime \hat{w}_n$, where $t_n^\prime=\tilde{t}$ as in \eqref{def tildet} if $n$ is odd, and $t_n^\prime=I_{2l}$ if $n$ is even. Note that if $n$ is odd, then $c w_n c=\tilde{t}\inv\hat{w}_n$, and if $n$ is even then $c w_n  c  = \hat{w}_n$ (here we realize $cw_n c\in \SO_{2l}$ via the embedding mentioned before). Recall that
$$w^{l,n}=\left(\begin{matrix}
 & I_n & & & \\
 I_{l-n-1} & & & & \\
 & & I_2 & & \\
 & & & & I_{l-n-1} \\
 & & & I_n &
\end{matrix}
\right)\in\SO_{2l}.$$ Let 
$$
\tilde{w}_n=w^{l,n}\hat{w}_n(w^{l,n})\inv=\left(\begin{matrix}
& & & & I_n \\
& I_{l-n-1} & & & \\
& & X & & \\
& & & I_{l-n-1} & \\
I_n & & & & 
\end{matrix}\right),
$$
where 
$$
X=\left\{\begin{array}{cc}
  J_2  & \mathrm{if} \, n \, \mathrm{is} \,  \mathrm{odd,} \\
   I_2  & \mathrm{if} \, n \, \mathrm{is} \,  \mathrm{even.} \\
\end{array}\right. 
$$
 This Weyl element $\tilde{w}_n$  occurs in the computations of the zeta integrals of the twists by $\GL_n$ and is also used to partition the Bessel support. $\tilde{w}_n$ also shows up naturally in the computations of the zeta integrals after applying the intertwining operator.

Let $\alpha_i$ be the simple roots of $\SO_{2l}$ given by $\alpha_i (t)=\frac{t_i}{t_{i+1}}$ for $i\leq l-1$ and $\alpha_l(t)=t_{l-1}t_l$, where $t=\mathrm{diag}(t_1,\dots,t_l,t_l\inv,\dots,t_1\inv).$ Let $\Phi(\SO_{2l})$, resp. $\Phi^+(\SO_{2l})$, be the set of roots, resp. positive roots, of $\SO_{2l}.$ 
Then, for $n\leq l-2$, the action of $\tilde{w}_n$ on the simple roots is given by
\begin{align*}
    \tilde{w}_{n}\alpha_i &= \alpha_{n-i} \, \, \mathrm{for} \, \, 1\leq i \leq n-1,\\
    \tilde{w}_n\alpha_n (t) &= t_1\inv t_{n+1}\inv,\\
    \tilde{w}_n\alpha_i&=\alpha_i \, \, \mathrm{for} \, \, n+1\leq i \leq l-2,\\
    \tilde{w}_n \alpha_{l-1}&=\left\{\begin{array}{cc}
\alpha_l & \mathrm{if} \, n \, \mathrm{is} \,  \mathrm{odd,} \\
   \alpha_{l-1}  & \mathrm{if} \, n \, \mathrm{is} \,  \mathrm{even},
   \end{array}\right.\\
   \tilde{w}_n \alpha_{l}&=\left\{\begin{array}{cc}
\alpha_{l-1} & \mathrm{if} \, n \, \mathrm{is} \,  \mathrm{odd,} \\
   \alpha_{l}  & \mathrm{if} \, n \, \mathrm{is} \,  \mathrm{even.} 
\end{array}\right.
\end{align*}
Note that $\tilde{w}_{l-1}$ sends $\alpha_{l-1}$ and $\alpha_l$ to negative roots and the rest to simple roots. Specifically,
\begin{align*}
    \tilde{w}_{l-1}\alpha_i &= \alpha_{l-1-i} \, \, \mathrm{for} \, \, 1\leq i \leq l-2,\\
    \tilde{w}_{l-1} \alpha_{l-1}(t)&=\left\{\begin{array}{cc}
  t_1\inv t_l\inv & \mathrm{if} \, l-1 \, \mathrm{is} \,  \mathrm{odd,} \\
   t_1\inv t_l  & \mathrm{if} \, l-1 \, \mathrm{is} \,  \mathrm{even},
   \end{array}\right.\\
   \tilde{w}_{l-1} \alpha_{l}(t)&=\left\{\begin{array}{cc}
 t_1\inv t_l & \mathrm{if} \, l-1 \, \mathrm{is} \,  \mathrm{odd,} \\
    t_1\inv t_l\inv  & \mathrm{if} \, l-1 \, \mathrm{is} \,  \mathrm{even.} 
\end{array}\right.
\end{align*}

Next, we consider the case $n=l.$ Note that the construction of $\tilde{w}_n$ for $n<l$ arose from the embedding of the Weyl elements involved in the intertwining operator and the Weyl elements in the zeta integrals. Reversing this for $n=l,$ would require knowing what Weyl elements in $\SO_{2l}$ embed into certain subsets involving the Weyl element $w_l\in \SO_{2l+1}.$ We  answer this question later in Propositions \ref{l-1 twist embed}, \ref{l twist supp}, and \ref{l twist conj supp}. For now, we motivate our choices by using the following bijection between $\mathcal{P}(\Delta(\SO_{2l}))$ and the Bessel support.

Let $\theta_w=\{\alpha\in\Delta(\SO_{2l}) \, | \, w \alpha\in\Phi^+(\SO_{2l})\}.$ The assignment $w\mapsto \theta_w$ gives a bijection from $\mathrm{B}(\SO_{2l})$ to the power set of $\Delta(\SO_{2l})$, which we denote by $\mathcal{P}(\Delta(\SO_{2l}))$. Then, for $n<l-1,$ $\theta_{\tilde{w}_n}=\Delta(\SO_{2l})\setminus \{\alpha_n\}$. Also, we have $\theta_{\tilde{w}_{l-1}}=\Delta(\SO_{2l})\setminus \{\alpha_{l-1}, \alpha_l \}.$ In the cases of \cite{LZ}, the Weyl elements for which $\theta_w=\Delta(\SO_{2l})\setminus\{\alpha_i\}$ for some $i$ are used to partition the Bessel support. This suggests that we  need at least $2$ more Weyl elements which we define below. These  correspond to the sets $\Delta(\SO_{2l})\setminus \{\alpha_{l-1}\}$ and $\Delta(\SO_{2l})\setminus \{ \alpha_l \}.$

We extend the definition of $\tilde{w}_n$ to the case of $n=l$. We begin by defining $\tilde{w}'_l$ and  define $\tilde{w}_l$ as either this Weyl element or its conjugation by $c$ depending on the parity of $l$.
If $l$ is even, let
$$
\tilde{w}'_l=\left(\begin{matrix}
 & I_l \\
 I_l & \\
\end{matrix}\right),
$$
while if $l$ is odd, let 
$$
\tilde{w}'_l=\left(\begin{matrix}
 & & I_{l-1} & \\
 1 & & & \\
 & & & 1 \\
& I_{l-1} & & \\
\end{matrix}\right).
$$
We let $w_{\mathrm{long}}$ be the long Weyl element of $\SO_{2l}.$ Then, 

$$
w_{\mathrm{long}} =\left\{\begin{array}{cc}
  \left(\begin{matrix}
   &  & J_{l-1} \\
   & I_2 & \\
   J_{l-1} & & 
  \end{matrix}\right)  & \mathrm{if} \, l \, \mathrm{is} \,  \mathrm{odd,} \\
   J_{2l}  & \mathrm{if} \, l \, \mathrm{is} \,  \mathrm{even.} \\
\end{array}\right. 
$$
We have that $$w_{\mathrm{long}}\tilde{w^\prime_l}
=\left(\begin{matrix}
J_l & \\
& J_l
\end{matrix}\right)
$$
which is the long Weyl element of a standard Levi subgroup that is isomorphic to $\GL_l.$ This suggests that $\theta_{\tilde{w}'_l}$ should be $\Delta(\SO_{2l})\setminus \{\alpha_{l-1}\}$ or $\Delta(\SO_{2l})\setminus \{ \alpha_l \}.$ We verify this explicitly below. Also, its conjugation by $c$ is the remaining set since $c$ acts on the simple roots by swapping $\alpha_{l-1}$ and $\alpha_l$ and fixing the rest. Set $\tilde{w}'_l{}^c=c\tilde{w}'_lc.$

We have $\tilde{w}'_l t  (\tilde{w}'_l)\inv=\mathrm{diag}(t_l\inv,\dots,t_2\inv,t_1^{(-1)^{l+1}},t_1^{(-1)^{l}},t_2,\dots,t_l).$ Hence,
\begin{align*}
    \tilde{w}'_l \alpha_i&=\alpha_{l-i}\, \, \mathrm{for} \, \, 1\leq i \leq l-2,\\
    \tilde{w}'_l \alpha_{l-1}(t)&=\left\{\begin{array}{cc}
t_2\inv t_1\inv & \mathrm{if} \, l \, \mathrm{is} \,  \mathrm{odd,} \\
   \alpha_{1}(t)  & \mathrm{if} \, l \, \mathrm{is} \,  \mathrm{even,} \\
\end{array}\right.\\
\tilde{w}'_l \alpha_{l}(t)&=\left\{\begin{array}{cc}
\alpha_1 (t) & \mathrm{if} \, l \, \mathrm{is} \,  \mathrm{odd,} \\
   t_2\inv t_1\inv  & \mathrm{if} \, l \, \mathrm{is} \,  \mathrm{even.} \\
\end{array}\right.
\end{align*}
Thus, $\theta_{\tilde{w}'_l}=\Delta(\SO_{2l})\setminus\{\alpha_{l-1}\}$ if $l$ is odd and $\theta_{\tilde{w}'_l}=\Delta(\SO_{2l})\setminus\{\alpha_{l}\}$ if $l$ is even. Similarly, we have $\theta_{\tilde{w}'_l{}^c}=\Delta(\SO_{2l})\setminus\{\alpha_{l}\}$ if $l$ is odd and $\theta_{\tilde{w}'_l{}^c}=\Delta(\SO_{2l})\setminus\{\alpha_{l-1}\}$ if $l$ is even. 
Hence, we set 
$$
\tilde{w}_l =\left\{\begin{array}{cc}
 \tilde{w}'_l{}^c  & \mathrm{if} \, l \, \mathrm{is} \,  \mathrm{odd,} \\
   \tilde{w}'_l  & \mathrm{if} \, l \, \mathrm{is} \,  \mathrm{even}, \\
\end{array}\right. 
$$
 and $\tilde{w}_l^c=c\tilde{w}_l c$. Then, $\theta_{\tilde{w}_l}=\Delta(\SO_{2l})\setminus\{\alpha_{l}\}$ and $\theta_{\tilde{w}^c_l}=\Delta(\SO_{2l})\setminus\{\alpha_{l-1}\}.$
 
For $x\in\GL_n$, we set $t_n(x)=\mathrm{diag}(x,I_{2l-2n},x^*)\in\SO_{2l}.$ For $n\leq l-2$, let $\mathrm{B}_n(\SO_{2l})$ be the set of $w\in\mathrm{B}(\SO_{2l})$ such that there exists $w^\prime\in W(\GL_n)$ such that $w=t_n(w^\prime)\tilde{w}_n.$
We also let $\mathrm{B}_l(\SO_{2l})$ be the set of $w\in\mathrm{B}(\SO_{2l})$ such that there exists $w^\prime\in W(\GL_l)$ such that $w=t_l(w^\prime)\tilde{w}_l$ and $cwc\neq w.$ Also, let $\mathrm{B}^c_l(\SO_{2l})$ be the set of $cwc$ where $w\in\mathrm{B}_l(\SO_{2l}).$ We let $\mathrm{B}_{l-1}(\SO_{2l})$ be the set of $w\in\mathrm{B}(\SO_{2l})$ such that there exists $w^\prime\in W(\GL_l)$ such that $w=t_l(w^\prime)\tilde{w}_l$ and $cwc= w.$ By convention, we also define $\mathrm{B}_0(\SO_{2l})=\{I_{2l}\}.$ Note that $\theta_{I_{2l}}=\Delta(\SO_{2l}).$ From the definition, we have that $\mathrm{B}^c_l(\SO_{2l})$ is the set of $w^\prime\in W(\GL_l)$ such that $w=t_l^c(w^\prime)\tilde{w}_l^c$ and $cwc\neq w$ where $t_l^c(w^\prime)=ct_l(w^\prime)c.$ From Proposition \ref{l-1 twist} below, a similar definition holds for $\mathrm{B}_{l-1}(\SO_{2l}).$ That is, $\mathrm{B}_{l-1}(\SO_{2l})$ is the set of $w\in\mathrm{B}(\SO_{2l})$ such that there exists $w^\prime\in W(\GL_{l-1})$ such that $w=t_{l-1}(w^\prime)\tilde{w}_{l-1}.$

We remark on the significance of the condition $cwc\neq w$ in $\mathrm{B}_l(\SO_{2l})$. Let $\widetilde{\mathrm{B}_l}(\SO_{2l})$ be the set of $w\in\mathrm{B}(\SO_{2l})$ such that there exists $w^\prime\in W(\GL_l)$ such that $w=t_l(w^\prime)\tilde{w}_l$ and $\widetilde{\mathrm{B}_l^c}(\SO_{2l})$ be the set of $cwc$ such that $w\in \widetilde{\mathrm{B}_l}(\SO_{2l}).$ Then $\widetilde{\mathrm{B}_l}(\SO_{2l})\cap\widetilde{\mathrm{B}_l^c}(\SO_{2l})=\mathrm{B}_{l-1}(\SO_{2l})$ is a nonzero intersection and hence we would not be able to partition the Bessel support with these sets. Thus it is necessary to include the condition $cwc\neq w$ in the definitions of $\mathrm{B}_l(\SO_{2l})$ and $\mathrm{B}^c_l(\SO_{2l})$. This does not happen for the classical groups considered in \cite{LZ}. 

Later, we see that the twists by $\GL_n$ determine the normalized Bessel function on Bruhat cells associated to Weyl elements in $\mathrm{B}_n(\SO_{2l})$, for $n\leq l-2$, the twists by $\GL_{l-1}$ determine only a part of the Bruhat cells for those in $\mathrm{B}_{l-1}(\SO_{2l})$, and the twists by $\GL_l$ determine the rest of the Bruhat cells for $\mathrm{B}_{l-1}(\SO_{2l})$ and the cells of $\mathrm{B}_{l}(\SO_{2l})$ and $\mathrm{B}_{l}^c(\SO_{2l}).$ The goal for the rest of this section is to show that these sets partition the Bessel support. Before we show this, we present some computational results for $\mathrm{B}_{l-1}(\SO_{2l}).$
 
\begin{prop}\label{l-1 Besselpart}
Let $w\in\mathrm{B}_{l-1}(\SO_{2l})$ and $w^\prime\in W(\GL_l)$ such that $w=t_l(w^\prime)\tilde{w}_l$. Then 
$$
w^\prime=\left(\begin{matrix}
& w'' \\
1 & 
\end{matrix}\right),
$$
where $w''\in W(\GL_{l-1}).$ 

Conversely, if $w\in\mathrm{B}(\SO_{2l})$ is such that $w=t_l(w^\prime)\tilde{w}_l$ where
$$
w^\prime=\left(\begin{matrix}
& w'' \\
1 & 
\end{matrix}\right),
$$ for some $w''\in W(\GL_{l-1}),$ then $w\in\mathrm{B}_{l-1}(\SO_{2l})$.
\end{prop}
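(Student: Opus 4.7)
The plan is to reduce the relation $cwc = w$ to a simple combinatorial constraint on $w$ viewed as a permutation of $\{1,\dots,2l\}$, and then translate this into the stated matrix condition on $w'$. The key initial observation is that $c$, realized as an element of $S_{2l}$, is just the transposition $(l,\,l+1)$. A direct check, writing $(cwc)(i) = c(w(c(i)))$ and comparing values at $i \in \{l,l+1\}$ and $i \notin \{l,l+1\}$, shows that $cwc = w$ holds if and only if $w$ preserves the set $\{l,l+1\}$, equivalently $\{w(l),w(l+1)\} = \{l,l+1\}$.

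Next I would compute the values of $\tilde{w}_l$ at $l$ and $l+1$ in both parities. When $l$ is even, $\tilde{w}_l = \tilde{w}'_l$ is the block anti-diagonal permutation swapping the top and bottom $l$ basis vectors, so $\tilde{w}_l(l)=2l$ and $\tilde{w}_l(l+1)=1$. When $l$ is odd, $\tilde{w}_l = c\tilde{w}'_l c$, and the same two equalities follow from the explicit form of $\tilde{w}'_l$ (its $l$-th and $(l+1)$-th rows have nonzero entries in columns $1$ and $2l$ respectively) together with the observation that $c$ fixes the indices $1$ and $2l$. Writing $w' e_1 = e_{\sigma(1)}$ and using the identity $(w')^* = J_l w' J_l$ for permutation matrices, the definition $t_l(w') = \mathrm{diag}(w',(w')^*)$ yields $t_l(w')(1) = \sigma(1)$ and $t_l(w')(2l) = 2l+1-\sigma(1)$. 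Composing with $\tilde{w}_l$ gives $w(l) = 2l+1-\sigma(1)$ and $w(l+1) = \sigma(1)$.

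Hence the equality $\{w(l), w(l+1)\} = \{l, l+1\}$ becomes $\{\sigma(1),\,2l+1-\sigma(1)\} = \{l,l+1\}$, and since $\sigma(1)\leq l$ this forces $\sigma(1) = l$. This is exactly the statement that the first column of $w'$ is $e_l$, which is the block form asserted in the proposition, with $w'' \in W(\GL_{l-1})$ realized as the top-right $(l-1)\times(l-1)$ block of $w'$. Since every implication in the chain is an equivalence, the converse runs backward: if $w'$ has this shape and $w = t_l(w')\tilde{w}_l \in \mathrm{B}(\SO_{2l})$, the same computations yield $\{w(l), w(l+1)\} = \{l,l+1\}$, hence $cwc = w$, and therefore $w \in \mathrm{B}_{l-1}(\SO_{2l})$ by definition.

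The main obstacle I anticipate is the case-by-case bookkeeping for $\tilde{w}_l$ when $l$ is odd, where one must carefully track how conjugation by $c$ alters the underlying permutation of $\tilde{w}'_l$ to confirm that the values at $l$ and $l+1$ match those of the even case; once that is in place, the rest is a direct index computation.
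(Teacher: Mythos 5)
Your argument is correct and takes a genuinely different, more combinatorial route than the paper. The paper proves the first claim by writing out $t_l(w^\prime)\tilde{w}_l\tilde{w}_l^\prime c$ (resp.\ $t_l(w^\prime)\tilde{w}_l c\tilde{w}_l c\, c$ for $l$ even) and $c\,t_l(w^\prime)$ as explicit block matrices and comparing entries to force $w'_{i,1}=0$ for $i\le l-1$; you instead observe that every element in sight is a genuine permutation matrix, that $c$ is the transposition $(l,\,l+1)$, and that $cwc=w$ is equivalent to $\{w(l),w(l+1)\}=\{l,l+1\}$, which after the index computation $\{w(l),w(l+1)\}=\{\sigma(1),\,2l+1-\sigma(1)\}$ forces $\sigma(1)=l$. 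This is cleaner, treats both parities and both directions of the equivalence uniformly, and avoids the block-matrix bookkeeping; the paper's computation has the side benefit of recording the explicit matrices $\tilde{w}_l\tilde{w}'_l$ that reappear in later sections. One small correction: for $l$ odd one gets $\tilde{w}_l(l)=1$ and $\tilde{w}_l(l+1)=2l$, i.e.\ the two values are swapped relative to the even case rather than being "the same two equalities" — but since your argument only uses the unordered set $\{\tilde{w}_l(l),\tilde{w}_l(l+1)\}=\{1,2l\}$, this does not affect the conclusion.
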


\begin{proof}
We only prove the first claim as the second claim is easy to verify directly.
Since $w\in\mathrm{B}_{l-1}(\SO_{2l})$, we have $cwc=w.$ Thus, $t_l(w^\prime)\tilde{w}_l=ct_l(w^\prime)c c\tilde{w}_l c$. First, assume that $l$ is odd, then $c\tilde{w}_l c= \tilde{w}'_l.$ Also $ct_l(w^\prime)=t_l(w^\prime)\tilde{w}_l \tilde{w}'_l c$ and 
$$\tilde{w}_l \tilde{w}'_l =\left(\begin{matrix}
& & & &  1 \\
& I_{l-2} & & & \\
& & J_2 & & \\
& & & I_{l-2} & \\
1 & & & &
\end{matrix}\right).$$
Let $w'=(w'_{i,j})_{i,j=1}^l.$ Then, 
$$t_l(w^\prime)\tilde{w}_l \tilde{w}'_l c=\left(\begin{matrix}
0 & (w'_{1,j})_{j=2}^l & 0 & w'_{1,1} \\
0 & (w'_{i,j})_{i,j=2}^l & 0 & (w'_{i,1})_{i=2}^l \\
* & 0 & * & 0 \\
* & 0 & * & 0
\end{matrix}\right),
$$
where the $*$'s represent coordinates in $(w')^*.$ On the other hand, $$ct_l(w^\prime)=
\left(\begin{matrix}
(w'_{i,j})_{i,j=1}^{l-1} & (w'_{i,l})_{i=1}^l & 0 & 0 \\
0 & 0 & * & * \\
w'_{l,1} & (w'_{l,j})_{j=2}^l & 0 & 0 \\
0 & 0 & * & *
\end{matrix}\right).
$$
Hence, we must have $w'_{i,1}=0$ for any $i=1,\dots,l-1$. Since $w'$ is a Weyl element of $\GL_l,$ $w'_{l,1}=1$ and hence the claim follows.

Next, we consider the case that $l$ is even. Again, we have $t_l(w^\prime)\tilde{w}_l=ct_l(w^\prime)c c\tilde{w}_l c$ and hence $t_l(w^\prime)\tilde{w}_l c\tilde{w}_l c c=ct_l(w^\prime)$. Again, $$\tilde{w}_l c\tilde{w}_l c =\left(\begin{matrix}
& & & &  1 \\
& I_{l-2} & & & \\
& & J_2 & & \\
& & & I_{l-2} & \\
1 & & & &
\end{matrix}\right).$$
The rest of this case follows exactly as above. This completes the proof of the proposition.
 \end{proof}

The following proposition relates $\mathrm{B}_{l-1}(\SO_{2l})$ with $\tilde{w}_{l-1}.$

\begin{prop}\label{l-1 twist}
Suppose that $w\in\mathrm{B}_{l-1}(\SO_{2l})$. Then there exists $\omega\in W(\GL_{l-1})$ such that $w=t_{l-1}(\omega)\tilde{w}_{l-1}.$ Conversely, if $w\in\mathrm{B}(\SO_{2l})$ is such that $w=t_{l-1}(\omega)\tilde{w}_{l-1}$ for some $\omega\in W(\GL_{l-1})$, then $w\in\mathrm{B}_{l-1}(\SO_{2l})$.
\end{prop}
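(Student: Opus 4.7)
The plan is to reduce the proposition to a single matrix identity and combine it with the structural result of Proposition \ref{l-1 Besselpart}. The heart of the argument is the equality
\[
t_l(w')\tilde{w}_l \;=\; t_{l-1}(w'')\tilde{w}_{l-1}, \qquad \text{when } w' = \begin{pmatrix} & w'' \\ 1 & \end{pmatrix} \text{ with } w'' \in W(\GL_{l-1}).
\]
For the forward direction, start from $w \in \mathrm{B}_{l-1}(\SO_{2l})$. By definition $w = t_l(w')\tilde{w}_l$ for some $w' \in W(\GL_l)$ with $cwc=w$, and Proposition \ref{l-1 Besselpart} forces $w' = \begin{pmatrix} & w'' \\ 1 & \end{pmatrix}$ for some $w'' \in W(\GL_{l-1})$. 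I would verify the displayed identity by direct block computation: since $w'$ is a permutation matrix, $(w')^{-1} = {}^tw'$, so $(w')^* = J_l\,{}^tw'\,J_l$, and the special shape of $w'$ gives $(w')^* = \begin{pmatrix} 0 & 1 \\ (w'')^* & 0 \end{pmatrix}$. Then multiply $\mathrm{diag}(w',(w')^*)$ by $\tilde{w}_l$ in block form, splitting according to the parity of $l$ (with $\tilde{w}_l = \begin{pmatrix} 0 & I_l \\ I_l & 0 \end{pmatrix}$ in the even case and $\tilde{w}_l = c\tilde{w}'_l c$ in the odd case), and compare with $t_{l-1}(w'')\tilde{w}_{l-1} = \begin{pmatrix} 0 & 0 & w'' \\ 0 & X & 0 \\ (w'')^* & 0 & 0 \end{pmatrix}$, where $X = J_2$ if $l$ is even and $X = I_2$ if $l$ is odd. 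The $1$ at position $(l,1)$ of $w'$ and the dual $1$ at position $(1,l)$ of $(w')^*$ are precisely what produces the two entries of the central block $X$ in either parity.

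For the converse, given $w = t_{l-1}(\omega)\tilde{w}_{l-1}$, set $w' = \begin{pmatrix} & \omega \\ 1 & \end{pmatrix} \in W(\GL_l)$; the same identity yields $w = t_l(w')\tilde{w}_l$, so it only remains to check $cwc = w$. Since the middle $2 \times 2$ block of $t_{l-1}(\omega)$ is $I_2$ and $c$ only acts on rows and columns $l, l+1$, we get $c\,t_{l-1}(\omega)\,c = t_{l-1}(\omega)$. The outer $I_{l-1}$ blocks of $\tilde{w}_{l-1}$ live in rows and columns disjoint from $\{l,l+1\}$, hence are fixed by conjugation by $c$, while the middle block $X$ satisfies $cXc = X$ whether $X = I_2$ or $X = J_2$. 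Therefore $c\tilde{w}_{l-1}c = \tilde{w}_{l-1}$, and combining gives $cwc = w$, placing $w$ in $\mathrm{B}_{l-1}(\SO_{2l})$.

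The main obstacle is purely bookkeeping in the forward block multiplication, especially handling the asymmetric block structure of $\tilde{w}'_l$ in the odd case and tracking how the distinguished zero column of $w'$ (respectively zero row of $(w')^*$) aligns against the off-diagonal structure of $\tilde{w}_l$. Once the block shapes are laid out, both sides of the displayed identity agree row-by-row, and both parities of $l$ yield the same conclusion via essentially identical arguments.
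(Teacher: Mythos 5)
Your proposal is correct and follows essentially the same route as the paper: invoke Proposition \ref{l-1 Besselpart} to pin down the shape of $w'$, then verify the block identity $t_l(w')\tilde{w}_l = t_{l-1}(w'')\tilde{w}_{l-1}$ by direct computation split on the parity of $l$. You are in fact slightly more thorough than the paper on the converse, where you explicitly check $cwc=w$ (via $cXc=X$ and the disjointness of the outer blocks from rows and columns $l,l+1$) rather than leaving it as "easy to verify."
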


\begin{proof}
By Proposition \ref{l-1 Besselpart}, $w=t_l(w^\prime)\tilde{w}_l$ where
$$
w^\prime=\left(\begin{matrix}
& w'' \\
1 & 
\end{matrix}\right),
$$
for some $w''\in W(\GL_{l-1}).$ We show that the claim follows with $\omega=w''.$ 

If $l$ is even, then $$w=t_l(w^\prime)\tilde{w}_l
=\left(\begin{matrix}
& & w'' \\
& J_2 & \\
(w'')^* & & 
\end{matrix}
\right).$$ It is immediate to check then $
t_{l-1}(w'')\tilde{w}_{l-1}=w.$ Hence $\omega=w''$ gives the claim.

If $l$ is odd, then $$w=t_l(w^\prime)\tilde{w}_l
=\left(\begin{matrix}
& & w'' \\
& I_2 & \\
(w'')^* & & 
\end{matrix}\right).
$$
Again, it is immediate to check then $
t_{l-1}(w'')\tilde{w}_{l-1}=w.$ Hence $\omega=w''$ gives the claim. This completes the proof of the proposition.
 \end{proof} 
 
For $0\leq n\leq l$, let $$P_n=\{ \theta\subseteq\Delta(\SO_{2l}) \, | \, w_\theta\in \mathrm{B}_n(\SO_{2l})\},$$ and $$P_l^c=\{ \theta\subseteq\Delta(\SO_{2l}) \, | \, w_\theta\in \mathrm{B}^c_l(\SO_{2l})\}.$$ The next proposition is crucial to show that the sets $\mathrm{B}_n(\SO_{2l})$ for $0\leq n\leq l$ and $\mathrm{B}_l^c(\SO_{2l})$ form a partition of the Bessel support.

\begin{prop}\label{Subsets}
For $n<l-1,$ $$P_n=\{\theta\in\Delta(\SO_{2l}) \, | \, \{\alpha_{n+1},\dots,\alpha_l\}\subseteq\theta\subseteq\Delta(\SO_{2l})\setminus\{\alpha_n\}\}.$$ We also have $P_0=\Delta(\SO_{2l}),$
$$
P_l=\{\theta\in\Delta(\SO_{2l}) \, | \, \{\alpha_{l-1}\}\subseteq\theta\subseteq\Delta(\SO_{2l})\setminus\{\alpha_{l}\}\},
$$
$$P_l^c=\{\theta\in\Delta(\SO_{2l}) \, | \, \{\alpha_{l}\}\subseteq\theta\subseteq\Delta(\SO_{2l})\setminus\{\alpha_{l-1}\}\},$$ and $$P_{l-1}=\{\theta\in\Delta(\SO_{2l}) \, | \, \theta\subseteq\Delta(\SO_{2l})\setminus\{\alpha_{l-1},\alpha_l\}\}.$$ 
\end{prop}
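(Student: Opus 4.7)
The plan is to verify each of the four claimed equalities by computing $\theta_w$ directly for the relevant Weyl elements $w$, using the bijection $w\mapsto\theta_w$ from $\mathrm{B}(\SO_{2l})$ to $\mathcal{P}(\Delta(\SO_{2l}))$ together with the analogous bijection for $\GL_n$. The case $n=0$ is immediate since $\mathrm{B}_0(\SO_{2l})=\{I_{2l}\}$ by convention and $\theta_{I_{2l}}=\Delta(\SO_{2l})$.

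For $1\leq n\leq l-2$, I would write $w=t_n(w')\tilde{w}_n$ with $w'\in W(\GL_n)$ corresponding to a permutation $\sigma\in S_n$. Using that $t_n(w')$ acts as $\sigma$ on $t_1,\dots,t_n$ and trivially on $t_{n+1},\dots,t_l$, together with the explicit action of $\tilde{w}_n$ on simple roots recorded earlier in this section, I find: for $i\geq n+1$, $w\alpha_i=\tilde{w}_n\alpha_i$ is simple, so $\alpha_i\in\theta_w$; $w\alpha_n=t_{\sigma(1)}\inv t_{n+1}\inv$ is always a negative root, so $\alpha_n\notin\theta_w$; and for $i\leq n-1$, $w\alpha_i=t_n(w')\alpha_{n-i}$ is a $\GL_n$-root whose sign and simplicity are controlled entirely by $w'$. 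The last item shows that $w\in\mathrm{B}(\SO_{2l})$ if and only if $w'\in\mathrm{B}(\GL_n)$, and combined with the bijection $\mathrm{B}(\GL_n)\leftrightarrow\mathcal{P}(\Delta(\GL_n))$ it shows that $\theta_w\cap\{\alpha_1,\dots,\alpha_{n-1}\}$ can be made to equal any prescribed subset by a suitable choice of $w'$, establishing the claim for $P_n$. The case $P_{l-1}$ proceeds the same way after first invoking Proposition \ref{l-1 twist} to rewrite $w=t_{l-1}(w'')\tilde{w}_{l-1}$; the new input is that both $\tilde{w}_{l-1}\alpha_{l-1}$ and $\tilde{w}_{l-1}\alpha_l$ have the form $t_1\inv t_l^{\pm 1}$, and these remain negative after $t_{l-1}(w'')$ is applied, giving $\alpha_{l-1},\alpha_l\notin\theta_w$ for every $w\in\mathrm{B}_{l-1}(\SO_{2l})$.

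The main obstacle is $P_l$ (and by symmetry $P_l^c$), where the condition $cwc\neq w$ in the definition of $\mathrm{B}_l(\SO_{2l})$ must be tracked. Writing $w=t_l(w')\tilde{w}_l$ with $w'$ corresponding to $\sigma\in S_l$, one computes $w\alpha_l=t_{\sigma(1)}\inv t_{\sigma(2)}\inv$ (always negative, so $\alpha_l\notin\theta_w$) and $w\alpha_{l-1}=t_{\sigma(1)}/t_{\sigma(2)}$, while Proposition \ref{l-1 Besselpart} identifies $cwc=w$ with $\sigma(1)=l$. The key combinatorial input is the following lemma: for any $w'\in\mathrm{B}(\GL_l)$, either $\sigma(1)=l$ or $\sigma(2)=\sigma(1)+1$. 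This follows by a short induction on the position $p$ of the value $l$ in $\sigma$: each step back from $p$ must be a step-one ascent (a descent at position $p-1$ would force $\sigma(p-1)>l$, impossible), hence $\sigma(p-j)=l-j$ for $0\leq j\leq p-1$, so if $\sigma(1)<l$ then $p\geq 2$ and $\sigma(2)=\sigma(1)+1$. Granting this, $w\alpha_{l-1}$ is simple if and only if $\sigma(1)\neq l$, equivalently $\alpha_{l-1}\in\theta_w$ if and only if $w\in\mathrm{B}_l(\SO_{2l})$. Combined with $\alpha_l\notin\theta_w$ and the freedom in $\theta_w\cap\{\alpha_1,\dots,\alpha_{l-2}\}$ coming from the $\GL_l$ bijection, this yields the description of $P_l$. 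The description of $P_l^c$ follows by conjugating by $c$, which swaps $\alpha_{l-1}\leftrightarrow\alpha_l$ and fixes the remaining simple roots.
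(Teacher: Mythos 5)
Your argument is correct and reaches the stated description of each $P_n$, but it is organized differently from the paper in two useful ways. For the reverse inclusions, the paper starts from an arbitrary $\theta$ satisfying the sandwich condition and reconstructs $w_\theta$ root by root through a long case analysis of $s=w_\theta t w_\theta^{-1}$ (showing successively $s_i=t_{j_i}^{-1}$, then pinning down $s_{l-1}$ and $s_l$, etc.); you instead realize each admissible $\theta$ directly as $\theta_{t_n(w')\tilde{w}_n}$ for a suitable $w'$ via the bijection $\mathrm{B}(\GL_n)\leftrightarrow\mathcal{P}(\Delta(\GL_n))$ and then invoke injectivity of $w\mapsto\theta_w$, which is shorter. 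Likewise, your chain lemma (each step back from the position of the extremal value must be a step-one ascent) isolates in one statement the argument the paper repeats across several subcases, and it cleanly produces the dichotomy $\alpha_{l-1}\in\theta_w\Leftrightarrow cwc\neq w$ separating $P_l$ from $P_{l-1}$ --- in fact it handles both sets simultaneously, whereas the paper treats them independently. Two points need care in a full write-up, though neither breaks the argument. First, the formulas $w\alpha_l=t_{\sigma(1)}^{-1}t_{\sigma(2)}^{-1}$ and $w\alpha_{l-1}=t_{\sigma(1)}/t_{\sigma(2)}$ are convention-dependent and ignore the distinguished entry of $\tilde{w}_l t\tilde{w}_l^{-1}$: the correct computation is $w\alpha_{l-1}(t)=s_{l-1}s_l^{-1}$ and $w\alpha_l(t)=s_{l-1}s_l$ where $(s_1,\dots,s_l)$ is a permutation of $(t_1^{-1},\dots,t_{l-1}^{-1},t_l^{\pm 1})$ with the sign depending on the parity of $l$, so you must separately treat the case where $t_l^{\pm 1}$ occupies position $l-1$ or $l$ (it still yields a simple, resp.\ negative, root, consistent with your conclusion, but this is invisible in your formula). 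Second, ``$w\in\mathrm{B}(\SO_{2l})$ iff $w'\in\mathrm{B}(\GL_n)$'' holds only after composing $w'$ with the fixed reversal $\alpha_i\mapsto\alpha_{n-i}$ induced by $\tilde{w}_n$; since composition with a fixed Weyl element is a bijection and you only need surjectivity onto subsets of $\{\alpha_1,\dots,\alpha_{n-1}\}$, this does not affect the conclusion.
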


\begin{proof}
First, assume that $n\leq l-2$ and suppose $\theta\in P_n.$ Then, $w_\theta\in \mathrm{B}_n(\SO_{2l}).$ Thus, there exists $w'\in W(\GL_n)$ such that $w=t_n(w')\tilde{w}_n.$ $t_n(w')$ acts on $t=\mathrm{diag}(t_1,\dots,t_l,t_l\inv,\dots,t_1\inv)$ by permuting the $t_i$'s for $1\leq i \leq n.$ By direct computation,
$\{\alpha_{n+1},\dots,\alpha_l\}\subseteq\theta\subseteq\Delta(\SO_{2l})\setminus\{\alpha_n\}$.

Suppose that $n\leq l-2$ and $\{\alpha_{n+1},\dots,\alpha_l\}\subseteq\theta\subseteq\Delta(\SO_{2l})\setminus\{\alpha_n\}$. First we  show that $w_\theta$ takes the set $\{\alpha_{l-1},\alpha_l\}$ to itself. Suppose that $w_\theta \alpha_l=\alpha_j$ for some $j\leq l-2$ for contradiction. Let $t=\mathrm{diag}(t_1,\dots,t_l,t_l\inv,\dots,t_1\inv)$ and $s=w_\theta t w_\theta\inv=\mathrm{diag}(s_1,\dots,s_l,s_l\inv,\dots,s_1\inv).$ Then, $w_\theta \alpha_l (t)=t_jt_{j+1}\inv.$ So, $w_\theta \alpha_{l-1}(t)= t_j t_{j+1}$. That is, $w_\theta \alpha_{l-1}$ is a positive, nonsimple root. But $w_\theta$ should be in the Bessel support and hence we have a contradiction. Therefore, $w_\theta$ takes the set $\{\alpha_{l-1},\alpha_l\}$ to itself. So, $s_{l-1}=t_{l-1}$ and $s_l=t_l$ or $s_l=t_l\inv.$ 

Next, we show that $w_\theta \alpha_i=\alpha_i$ for $n+1 \leq i\leq l-2.$ We do this by considering $\alpha_{l-2},$ then $\alpha_{l-3},$ and so on until we reach $\alpha_{n+1}.$
We have $w_\theta \alpha_{l-2}(t)=t_j^\varepsilon t_{l-1}\inv$ for some $j\leq l-2$ and $\varepsilon=\pm 1.$ If $\varepsilon=-1,$ this is a negative root, but it should be simple by assumption. Thus $\varepsilon=1$ and $w_\theta \alpha_{l-2}$ is a positive root. In this case, $w_\theta \alpha_{l-2}$ is simple if and only if $j=l-2.$ Hence, $s_{l-2}=t_{l-2}.$ 
We repeat this argument for $\alpha_{l-3}$ and then $\alpha_{l-4}$ and so on until we obtain $\alpha_{n+1}.$ In summary, we have $w\alpha_i=\alpha_i$ for $n+1 \leq i\leq l-2.$ Hence, $s_i=t_i$ for $n+1 \leq i\leq l-1$ and $s_l\in\{t_l,t_l\inv\}.$

Then, $w_\theta \alpha_n (t)=t_j^\varepsilon t_{n+1}\inv$ for some $j\leq n$ and $\varepsilon=\pm 1.$ Since  $w_\theta \alpha_n$ is negative, $\varepsilon=-1.$ Finally, since $w_\theta \alpha_i$ is either simple or negative for $i\leq n-1$, it follows that $s_i \in \{t_1\inv,\dots,t_n\inv\}$ for any $i\leq n.$ Let $w'\in W(\GL_n)$ be the Weyl element which takes $\mathrm{diag}(t_1,\dots,t_n)$ to $\mathrm{diag}(s_1\inv,\dots,s_n\inv).$ Then, $w_\theta
=t_n(w')\tilde{w}_n
$
for some $w'\in W(\GL_n)$ (note that the $X$ in the definition of $\tilde{w}_n$ comes from needing $\mathrm{det}(w_\theta)=1$ and this also determines the action of $w_\theta$ on $\alpha_{l-1}$ and $\alpha_l$). Therefore, for $n<l-1,$ we have 
$$P_n=\{\theta\in\Delta(\SO_{2l}) \, | \, \{\alpha_{n+1},\dots,\alpha_l\}\subseteq\theta\subseteq\Delta(\SO_{2l})\setminus\{\alpha_n\}\}.$$

Next, we examine $P_{l-1}.$ Suppose $\theta\in P_{l-1}.$ Then, $w_\theta\in \mathrm{B}_{l-1}(\SO_{2l}).$ Thus, there exists $w'\in W(\GL_l)$ such that $w_\theta=t_l(w')\tilde{w}_l.$ Also, $cw_\theta c=w_\theta.$
By Proposition \ref{l-1 Besselpart}, $$
w^\prime=\left(\begin{matrix}
& w'' \\
1 & 
\end{matrix}\right),
$$
for some $w''\in W(\GL_{l-1}).$ Since $\tilde{w}_l t \tilde{w}_l\inv=\mathrm{diag}(t_l^{\pm 1},\dots,t_1\inv,t_1,\dots, t_l^{\mp 1}),$ it follows that the $(l,l)$-entry of $w_\theta t w_\theta\inv$ is $t_l^{\pm 1}$ (the sign depends on the parity of $l$). Hence $w_\theta \alpha_l(t)=t_j\inv t_l^{\pm 1}$ for some $j\leq l-1$ and hence $w_\theta \alpha_l$ is a negative root. So, $\alpha_l \notin \theta.$ $w_\theta \alpha_{l-1}(t)= t_j\inv t_l^{\mp 1}$ is also a negative root. So $\alpha_{l-1}\notin \theta$ and hence $\theta\subseteq \Delta(\SO_{2l})\setminus\{\alpha_{l-1},\alpha_l\}.$

Now we suppose that $\theta\subseteq \Delta(\SO_{2l})\setminus\{\alpha_{l-1},\alpha_l\}.$ Then $w_\theta \alpha_{l-1}$ and $w_\theta \alpha_{l-1}$ are negative. Let $s=w_\theta t w_\theta\inv=\mathrm{diag}(s_1,\dots,s_l,s_l\inv,\dots,s_1\inv).$ Then $w_\theta \alpha_{l-1}(t)=s_{l-1}s_l\inv$ and $w_\theta \alpha_l(t)=s_{l-1}s_l.$ We  show that $s_{l-1}=t_j\inv$ for some $j$ by contradiction. Suppose $s_{l-1}=t_j$ If $s_l=t_k$, then  $w_\theta \alpha_l(t)=t_j t_k$ is a positive root which is a contradiction. But, if $s_l=t_k\inv$, then  $w_\theta \alpha_{l-1}(t)=t_j t_k$ is positive and hence we have another contradiction. Therefore, we must have $s_{l-1}=t_j\inv.$

Our next step is to show that $s_l=t_l$ or $s_l=t_l\inv$ depending on the parity of $l$. Since $s_{l-1}=t_j\inv,$ $w_\theta \alpha_{l-2}(t)=s_{l-2}t_j.$ Since $w_\theta\in\mathrm{B}(\SO_{2l})$, this root must be simple or negative. It is simple if $s_{l-2}=t_{j+1}\inv.$ It is negative if $s_{l-2}=t_r\inv$ for some $r<j.$ In either case we have that $s_{l-2}=t_r\inv$ for some $r$. By continuing this process for all the roots, we find that for each $n<l$ there exists a unique $j_n$ such that $s_n=t_{j_n}\inv$ Suppose that $s_l\neq t_l$ and $s_l\neq t_l\inv$ for contradiction. Then, there exists $n$ such that $s_n=t_l\inv.$ Then $w_\theta \alpha_n(t)=t_l\inv t_{j_{n+1}}.$ This root is positive for any index $j_{n+1}.$ It is simple when $j_{n+1}=l-1.$ Thus we must have $s_{n+1}=t_{l-1}\inv.$ Next, $w_\theta \alpha_{n+1}(t)=t_{l-1}\inv t_{j_{n+2}}.$ This root is positive for any index $j_{n+2}<l-1.$ It is simple when $j_{n+1}=l-2.$ Thus we must have $s_{n+2}=t_{l-2}\inv.$ Continuing in this fashion we find that $s_{i}=t_{l+n-i}\inv$ for $n\leq i \leq l-2.$ Thus $w_\theta \alpha_{l-1}(t)= t_{l+n-(l-2)}\inv t_j$ where $j\leq l+n-(l-2).$ This is always a postive root. However, by assumption, it is negative. Hence we have a contradiction. Therefore, we must have $s_l=t_l$ or $s_l=t_l\inv.$
Thus, $w_\theta t w_\theta\inv = \mathrm{diag}(t_{j_1}\inv,\dots,t_{j_{l-1}}\inv, t_l^\epsilon, t_l^{-\epsilon}, t_{j_{l-1}}, \dots, t_{j_{1}})$ for some $\epsilon\in\{\pm 1\}.$ Let $w''\in W(\GL_{l-1})$ be such that $w''\mathrm{diag}(t_1,\dots,t_{l-1})(w'')\inv=\mathrm{diag}(t_{j_{l-1}}, \dots, t_{j_{1}}).$ Then, $t_{l-1}(w'')\tilde{w}_{l-1}=w_\theta$ (note that the parity of $l$ determines $\epsilon$ since we need the determinant to be 1) and hence $w_\theta\in\mathrm{B}(\SO_{2l}).$ Therefore, we have 
$$P_{l-1}=\{\theta\in\Delta(\SO_{2l}) \, | \, \theta\subseteq\Delta(\SO_{2l})\setminus\{\alpha_{l-1},\alpha_l\}\}.$$

Next, we show the claim for $P_l.$ Suppose that $w_\theta=t_l(w')\tilde{w}_l$ for some $w'\in W(\GL_l)$ with $cw_\theta c\neq w_\theta.$ For $l$ odd, we have 
$$\tilde{w}_l t \tilde{w}_l\inv = \mathrm{diag}(t_l,t_{l-1}\inv,\dots, t_2\inv, t_1\inv, t_1, t_2,\dots, t_{l-1}, t_l\inv),$$
and for $l$ even
$$\tilde{w}_l t \tilde{w}_l\inv = \mathrm{diag}(t_l\inv,t_{l-1}\inv,\dots, t_2\inv, t_1\inv, t_1, t_2,\dots, t_{l-1}, t_l).$$
Let $w'=(w'_{i,j})_{i,j=1}^l.$
Since $cw_\theta c \neq w_\theta,$ by Proposition \ref{l-1 Besselpart}, 
$w'_{l,1}=0.$
Let 
$$w_\theta t w_\theta\inv=\mathrm{diag}(s_1,\dots,s_l,s_l\inv,\dots,s_1\inv).$$
For $l$ odd, $s_i\in\{t_1\inv,\dots,t_{l-1}\inv,t_l\}$ and for $l$ even $s_i\in\{t_1\inv,\dots,t_{l-1}\inv,t_l\inv\}.$ Since $w'_{l,1}=0,$ $s_l\neq t_l^{\pm 1}.$ Thus, $w_\theta \alpha_{l-1}=s_{l-1}s_l\inv=s_{l-1}t_r$  and $w_\theta \alpha_{l-1}=s_{l-1}s_l=s_{l-1}t_r\inv$ for some $r<l$. If $s_{l-1}=t_l,$ then $w_\theta \alpha_{l-1} $ is positive and we must have $r=l-1$ so that it is simple. Then, $w_\theta \alpha_l=t_l t_{l-1}\inv$ is negative and hence we have $\{\alpha_{l-1}\}\subseteq\theta\subseteq\Delta(\SO_{2l})\setminus\{\alpha_{l}\}.$ If $s_{l-1}=t_l\inv$, then $w_\theta \alpha_{l-1}$ is again positive since $r<l.$ We must have $r=l-1$ to ensure it is simple. But, then $w_\theta \alpha_{l-2}(t)=t_j t_l$ for some $j<l-1$. This root is positive and never simple. Hence we have a contradiction. Thus $s_{l-1}\neq t_l\inv.$ Thus we have shown the claim for $s_{l-1}=t_l$ (and $t_l\inv$ for which we had a contradiction) so far. Suppose $s_{l-1}=t_j\inv$ for some $j<l.$ Then $w_\theta \alpha_{l-1}(t)=t_j\inv t_r$ and $w_\theta \alpha_{l}(t)=t_j\inv t_r\inv.$ So $w_\theta \alpha_{l}$ is always negative. $w_\theta \alpha_{l-1}$ is negative if $j<r$ and simple if $j=r+1.$ We  show that we must have $j=r+1.$ Suppose $j<r$ for contradiction. Then there exists $n$ such that $s_n=t_l$ or $t_l\inv.$ Suppose $s_n=t_l$. Then $w_\theta \alpha_{n}(t)=s_n s_{n+1}\inv=t_l t_{j_{n+1}}.$ This root is positive and only simple if $j_{n+1}=l-1.$ Then $w_\theta \alpha_{n+1}(t)=t_{l-1}\inv t_{j_{n+2}}$ for some $j_{n+2}<l-1.$ Again this root is always positive and is simple only if $j_{n+2}=l-2.$ Continuing this, we find $s_i=t_{l+n-i}\inv$ for $n+1\leq i \leq l.$ Thus we have $k=n$ and $w_\theta \alpha_{l-1}(t)=t_{n+1}\inv t_{n}$ is simple and $w_\theta \alpha_{l}(t)=t_{n+1}\inv t_{n}\inv$ is negative. Therefore, $\{\alpha_{l-1}\}\subseteq\theta\subseteq\Delta(\SO_{2l})\setminus\{\alpha_{l}\}.$ We address the final case. Suppose that $s_n=t_l\inv.$ Then $w_\theta \alpha_n(t)=t_l\inv t_{j_{n+1}}$ with $j_{n+1}<l.$ This root is always positive. It is simple only if $j_{n+1}=l-1.$ The rest of this case follows exactly as in the previous case. Again we find
$\{\alpha_{l-1}\}\subseteq\theta\subseteq\Delta(\SO_{2l})\setminus\{\alpha_{l}\}$ and $\theta\in P_l.$

Next, suppose that $\{\alpha_{l-1}\}\subseteq\theta\subseteq\Delta(\SO_{2l})\setminus\{\alpha_{l}\}.$ Let 
$$w_\theta t w_\theta\inv=\mathrm{diag}(s_1,\dots,s_l,s_l\inv,\dots,s_1\inv).$$
Then, 
$w_\theta \alpha_{l-1}(t)=s_{l-1}s_l\inv$ is simple and $w_\theta \alpha_{l-1}(t)=s_{l-1}s_l$ is negative. Suppose first that the simple root $w_\theta \alpha_{l-1}(t)$ is 
$s_{l-1}s_l\inv=t_r t_{r+1}\inv$ for some $r<l$. If $s_l=t_{r+1},$ then $w_\theta \alpha_{l}(t)=t_r t_{r+1}$ is positive which is a contradiction. Therefore, we must have $s_{l-1}=t_{r+1}\inv$ and $s_l=t_r\inv.$
Hence, $w_\theta \alpha_{l-2}(t)=s_{l-2}s_{l-1}\inv=t_{j_{l-2}}^{\epsilon}t_{r+1}$ where $\epsilon\in\{\pm 1\}.$ This is simple if $\epsilon=-1$ and $j_{l-2}=r+2.$ It is negative if $\epsilon=-1$ and $j_{l-2}<r+1.$ In either case, we have $s_{l-2}=t_{j_{l-2}}\inv.$ We can continue this argument until we arrive at $s_{n+1}=t_{l-1}\inv.$ That is, $s_i=t_{j_{i}}\inv$ for $n+1 \leq i\leq l.$ The next root is $w_\theta \alpha_{n}(t)=s_{n}s_{n+1}\inv=t_{j_{n}}^{\epsilon}t_{l-1}$ where $\epsilon\in\{\pm 1\}.$ This is simple for two cases: when $j_n=l$ and $\epsilon=\pm 1.$ $\epsilon$ is determined by the parity of $l$. Specifically, if $\epsilon=1$, then $l$ must be odd and if $\epsilon=-1,$ then $l$ must be even. Suppose that $s_n=t_l.$ Then $w_\theta \alpha_{n-1}(t)=s_{n-1}s_{n}\inv=t_{j_{n-1}}^{\epsilon}t_{l}\inv$ where $j_{n-1}< r$ and $\epsilon\in\{\pm 1\}.$ Since $j_{n-1}< r< l-1$, this root can never be simple. Thus it must be negative and so $\epsilon=-1.$ That is, $s_{n-1}=t_{j_{n-1}}\inv.$ Continuing in the above manner, we again find that $s_i=t_{j_i}\inv$ for $i<n.$ Let $w''\in W(\GL_l)$ be such that $w'' \mathrm{diag}(t_l,t_{l-1}\inv,\dots,t_1\inv) (w'')\inv=\mathrm{diag}(s_1,\dots,s_l).$ Then, $w_\theta=t_l(w'')\tilde{w}_l\in \mathrm{B}_l(\SO_{2l}).$
Next, we must address the case  $s_n=t_l\inv.$ Then $w_\theta \alpha_{n-1}(t)=s_{n-1}s_{n}\inv=t_{j_{n-1}}^{\epsilon}t_{l}$ where $j_{n-1}< r$ and $\epsilon\in\{\pm 1\}.$ Since $j_{n-1}< r< l-1$, this root can never be simple. Thus it must be negative and so $\epsilon=-1.$ That is, $s_{n-1}=t_{j_{n-1}}\inv.$ Continuing in the above manner, we again find that $s_i=t_{j_i}\inv$ for $i<n.$ Let $w''\in W(\GL_l)$ be such that $w'' \mathrm{diag}(t_l\inv,t_{l-1}\inv,\dots,t_1\inv) (w'')\inv=\mathrm{diag}(s_1,\dots,s_l).$ Then, $w_\theta=t_l(w'')\tilde{w}_l\in \mathrm{B}_l(\SO_{2l}).$ Thus, if $w_\theta \alpha_{l-1}(t)=t_r t_{r+1}\inv$ for some $r<l$, then 
$$
P_l=\{\theta\in\Delta(\SO_{2l}) \, | \, \{\alpha_{l-1}\}\subseteq\theta\subseteq\Delta(\SO_{2l})\setminus\{\alpha_{l}\}\},
$$
Suppose that $w_\theta \alpha_{l-1}(t)$ is the remaining simple root
$s_{l-1}s_l\inv=t_{l-1} t_{l}$. If $s_{l-1}=t_{l-1}$ then $w_\theta\alpha_l$ is simple which is a contradiction. Thus, we must have $s_{l-1}=t_l$ and $s_l=t_{l-1}\inv.$

Thus, $w_\theta\alpha_{l-2}(t)=s_{l-2}t_l\inv.$ This root cannot be simple and hence we must have $s_{l-2}=t_{j_{l-2}}\inv.$ Next, $w_\theta\alpha_{l-3}(t)=s_{l-3}t_{j_{l-2}}=t_{j_{l-3}}^\epsilon t_{j_{l-2}}.$ This root is always positive and not simple if $\epsilon=1.$ Thus we must have $s_{l-3}=t_{j_{l-3}}\inv.$ Continuing in this manner for all the remaining roots shows that for any $i\leq l-2$ we have $s_i\in\{t_1\inv,\cdots,t_{l-2}\inv\}.$ 
Thus, $$w_\theta=\left(\begin{matrix}
 &  & w_1 \\
 & w_2 &  \\
w_1' & & \\
\end{matrix}\right),$$ 
where $w_1 \mathrm{diag}(t_{l-2}\inv,\dots,t_{1}\inv) w_1\inv=\mathrm{diag}(s_1,\dots,s_{l-2})$, and
$$
w_2=\left(\begin{matrix}
0 & 1 & 0 & 0 \\
0 & 0 & 0 & 1 \\
1 & 0 & 0 & 0 \\
0 & 0 & 1 & 0
\end{matrix}\right).
$$
Note that $\mathrm{det}(w_2)=-1$ and hence this is only possible if $l$ is odd (for $l$ even, $\mathrm{det}w_\theta=-1$).
Let $w''\in W(\GL_l)$ be such that $w'' \mathrm{diag}(t_l,t_{l-1}\inv,\dots,t_1\inv) (w'')\inv=\mathrm{diag}(s_1,\dots,s_l).$ Then, we have $w_\theta=t_l(w'')\tilde{w}_l\in \mathrm{B}_l(\SO_{2l}).$ Thus, in both the cases of $w_\theta \alpha_{l-1}(t)$, we have 
$$
P_l=\{\theta\in\Delta(\SO_{2l}) \, | \, \{\alpha_{l-1}\}\subseteq\theta\subseteq\Delta(\SO_{2l})\setminus\{\alpha_{l}\}\}.
$$

Finally, conjugation by $c$ fixes the roots $\alpha_i$ for $i\leq l-2$ and maps $\alpha_{l-1}\mapsto \alpha_l$ and $\alpha_l\mapsto \alpha_{l-1}.$ Using the result for $P_l$, it is straightforward to check that 
$$P_l^c=\{\theta\in\Delta(\SO_{2l}) \, | \, \{\alpha_{l}\}\subseteq\theta\subseteq\Delta(\SO_{2l})\setminus\{\alpha_{l-1}\}\}.$$ This finishes the proof of the proposition.
 \end{proof}

Next, we interpret the previous proposition in terms of the Bessel support.

\begin{prop}\label{BesselPartition}
The sets $\mathrm{B}_n(\SO_{2l})$ for $n=0,1,\dots,l$ and $\mathrm{B}^c_l(\SO_{2l})$ form a partition of $\mathrm{B}(\SO_{2l}).$
\end{prop}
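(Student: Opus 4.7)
The plan is to reduce the problem to a case analysis on subsets of $\Delta(\SO_{2l})$ using the bijection $w\mapsto\theta_w$ between $\mathrm{B}(\SO_{2l})$ and $\mathcal{P}(\Delta(\SO_{2l}))$ introduced in Section \ref{BesselSupport}. Since this map is a bijection, it suffices to show that the collection $P_0,P_1,\dots,P_{l-1},P_l,P_l^c$ partitions $\mathcal{P}(\Delta(\SO_{2l}))$. The explicit description of these subsets provided by Proposition \ref{Subsets} makes this a purely combinatorial check, so I would simply verify covering and disjointness.

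For covering, I would stratify an arbitrary $\theta\subseteq\Delta(\SO_{2l})$ according to the status of the two ``spin'' simple roots $\alpha_{l-1}$ and $\alpha_l$. If $\{\alpha_{l-1},\alpha_l\}\subseteq\theta$ and $\theta=\Delta(\SO_{2l})$, then $\theta\in P_0$; otherwise, letting $n$ be the largest index in $\{1,\dots,l-2\}$ with $\alpha_n\notin\theta$, the description of $P_n$ gives $\theta\in P_n$. If $\{\alpha_{l-1},\alpha_l\}\cap\theta=\emptyset$, then $\theta\in P_{l-1}$. If exactly one of $\alpha_{l-1},\alpha_l$ lies in $\theta$, then $\theta$ lands in $P_l$ or $P_l^c$ according to which one is present. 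Thus every subset is accounted for.

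For disjointness, observe that Cases A--D above are distinguished by the status of $\{\alpha_{l-1},\alpha_l\}$ in $\theta$, and hence $P_{l-1}$, $P_l$, $P_l^c$, and the family $\{P_n\}_{0\le n\le l-2}$ are pairwise disjoint. Within the latter family, $P_0$ is the singleton $\{\Delta(\SO_{2l})\}$, while for $1\le n\le l-2$ membership in $P_n$ forces $\alpha_n\notin\theta$; and if $\theta\in P_n\cap P_{n'}$ with $n<n'\le l-2$, then $\alpha_{n'}\in\{\alpha_{n+1},\dots,\alpha_l\}\subseteq\theta$ contradicts $\alpha_{n'}\notin\theta$. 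This completes the disjointness verification.

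I do not expect any genuine obstacle: the substantive work has already been carried out in Proposition \ref{Subsets}, which pins down each $P_n$ and $P_l^c$ as an explicit ``interval'' in the Boolean lattice $\mathcal{P}(\Delta(\SO_{2l}))$ determined by the prescribed inclusions and exclusions of simple roots. The only point requiring care is the role of the condition $cwc\neq w$ in the definitions of $\mathrm{B}_l(\SO_{2l})$ and $\mathrm{B}_l^c(\SO_{2l})$, which is precisely what prevents $P_l$ and $P_l^c$ from overlapping $P_{l-1}$; this was addressed in the discussion following the definitions and is already encoded in the descriptions of $P_l$ and $P_l^c$ given by Proposition \ref{Subsets}.
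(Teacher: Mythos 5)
Your proof is correct and follows the same route as the paper: reduce via the bijection $w\mapsto\theta_w$ to showing that the sets $P_0,\dots,P_{l-1},P_l,P_l^c$ described in Proposition \ref{Subsets} partition $\mathcal{P}(\Delta(\SO_{2l}))$. The paper simply asserts this combinatorial fact, whereas you spell out the covering and disjointness check explicitly; your case analysis on the status of $\alpha_{l-1}$ and $\alpha_l$ is accurate.
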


\begin{proof}
We have that $$
\bigsqcup_{n=0}^{l-2} P_n \sqcup P_l \sqcup P_l^c \sqcup P_{l-1} = \mathcal{P}(\Delta(\SO_{2l})),
$$
the power set of $\Delta(\SO_{2l})$. 
The claim then follows from Proposition \ref{Subsets}.
 \end{proof}

We need the following proposition to show that the zeta integrals are nonzero in later sections.

\begin{prop}\label{uppertriangular}
If $w\in W(\GL_l)$ and $w\neq I_l$, then $t_l(w)\notin \mathrm{B}(\SO_{2l}).$ In particular, if $a\in\GL_l$ and $a$ is not upper triangular, then $\B_{\pi,\psi}(t_l(a))=0.$
\end{prop}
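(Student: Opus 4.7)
The strategy is to use the Bessel support condition to pin down how $t_l(w)$ can act on the simple roots of $\SO_{2l}$, force $w$ to act trivially on the torus, and then derive the ``in particular'' clause from the first assertion via the Bruhat decomposition of $\GL_l$.

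First I would identify $W(\GL_l)$ with $S_l$ via permutation matrices and write $\sigma$ for the permutation associated to $w$. A direct computation with $t_l(w)=\mathrm{diag}(w,w^*)$ shows that conjugation by $t_l(w)$ sends
\[
\mathrm{diag}(t_1,\dots,t_l,t_l^{-1},\dots,t_1^{-1}) \longmapsto \mathrm{diag}(t_{\sigma^{-1}(1)},\dots,t_{\sigma^{-1}(l)},t_{\sigma^{-1}(l)}^{-1},\dots,t_{\sigma^{-1}(1)}^{-1}),
\]
so that
\[
t_l(w)\alpha_i(t)=\frac{t_{\sigma^{-1}(i)}}{t_{\sigma^{-1}(i+1)}}\ \ (1\le i\le l-1),\qquad t_l(w)\alpha_l(t)=t_{\sigma^{-1}(l-1)}t_{\sigma^{-1}(l)}.
\]
Now assume $t_l(w)\in \mathrm{B}(\SO_{2l})$. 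Since $t_l(w)\alpha_l$ is automatically a positive root, the Bessel support condition forces it to be simple, so $\{\sigma^{-1}(l-1),\sigma^{-1}(l)\}=\{l-1,l\}$.

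In \emph{Case A}, $\sigma^{-1}(l-1)=l-1$ and $\sigma^{-1}(l)=l$. I would use descending induction on $i$: assuming $\sigma^{-1}(j)=j$ for all $j\in\{i+1,\dots,l\}$, injectivity of $\sigma^{-1}$ forces $\sigma^{-1}(i)\in\{1,\dots,i\}$, so $t_l(w)\alpha_i(t)=t_{\sigma^{-1}(i)}/t_{i+1}$ is positive; requiring it to be simple yields $\sigma^{-1}(i)=i$. Iterating down to $i=1$ gives $\sigma=\mathrm{id}$, hence $w=I_l$. In \emph{Case B}, $\sigma^{-1}(l-1)=l$ and $\sigma^{-1}(l)=l-1$, so $t_l(w)\alpha_{l-2}(t)=t_{\sigma^{-1}(l-2)}/t_l$. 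By injectivity, $\sigma^{-1}(l-2)\in\{1,\dots,l-2\}$, so this root is positive but cannot be simple (that would require $\sigma^{-1}(l-2)=l-1$, already assigned), contradicting $t_l(w)\in \mathrm{B}(\SO_{2l})$. Both cases together give $w=I_l$, proving the first assertion.

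For the ``in particular'' clause, I would invoke the Bruhat decomposition in $\GL_l$: write $a=u_1tw^\circ u_2$ with $u_1,u_2\in U_{\GL_l}$, $t\in T_{\GL_l}$, and $w^\circ$ a permutation matrix; then $a$ fails to be upper triangular precisely when $w^\circ\neq I_l$. Since $a\mapsto t_l(a)=\mathrm{diag}(a,a^*)$ is a group homomorphism $\GL_l\to\SO_{2l}$ carrying $U_{\GL_l}$ into $U_{\SO_{2l}}$ and $T_{\GL_l}$ into $T_{\SO_{2l}}$, we obtain
\[
t_l(a)=t_l(u_1)\cdot t_l(t)t_l(w^\circ)\cdot t_l(u_2),
\]
and Proposition \ref{Besselprop} gives
\[
\B_{\pi,\psi}(t_l(a))=\psi(t_l(u_1))\psi(t_l(u_2))\,\B_{\pi,\psi}(t_l(t)t_l(w^\circ)).
\]
Since $t_l(t)\in T_{\SO_{2l}}\subseteq B_{\SO_{2l}}$, the point $t_l(t)t_l(w^\circ)$ lies in the Bruhat cell $B_{\SO_{2l}}t_l(w^\circ)B_{\SO_{2l}}$; the lemma immediately following Proposition \ref{Besselprop}, combined with the first part of the current proposition applied to $w^\circ\neq I_l$, then forces $\B_{\pi,\psi}(t_l(a))=0$.

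The main obstacle is the Case B analysis: one must notice that swapping the top two indices displaces $l$ into the denominator of $t_l(w)\alpha_{l-2}$, and that no admissible value of $\sigma^{-1}(l-2)$ can repair the image to be either simple or negative. Once this is recognized, the downward induction in Case A and the Bruhat reduction of the ``in particular'' clause are routine applications of the transformation properties of $\B_{\pi,\psi}$ already established.
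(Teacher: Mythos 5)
Your proof is correct (for $l\ge 3$) and handles the ``in particular'' clause exactly as the paper does, but it establishes the first assertion by a genuinely different route. The paper reduces to its partition of the Bessel support (Proposition \ref{BesselPartition}): it checks that $t_l(w)$ can never be written as $t_n(w')\tilde{w}_n$ for any $n$ (since $\tilde{w}_n$ is never of the block-diagonal form $\mathrm{diag}(A,A^*)$), and a separate block computation rules out $t_l(w)\in \mathrm{B}_l^c(\SO_{2l})$ by forcing the top-left $(l-1)\times(l-1)$ corner of $w$ to vanish. You instead verify the defining condition of the Bessel support directly on the simple roots: $t_l(w)\alpha_l$ is automatically a positive root, hence must be simple, which pins $\sigma^{-1}$ on $\{l-1,l\}$, and your two cases then force $\sigma=\mathrm{id}$. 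Your computation of the action of $t_l(w)$ on $\Delta(\SO_{2l})$ matches the paper's conventions, and both cases are sound. What your approach buys is self-containedness: it bypasses Propositions \ref{l-1 Besselpart}, \ref{Subsets} and \ref{BesselPartition} entirely, whereas the paper's proof leverages structure it has already established (at the cost of depending on the lengthy Proposition \ref{Subsets}). One caveat, shared equally by the paper: your Case B invokes $\alpha_{l-2}$ and the paper's $\mathrm{B}_l^c$ argument needs at least two rows of $w$ forced into column $l$, so both implicitly assume $l\ge 3$; for $l=2$ the transposition $w$ satisfies $t_2(w)=c\tilde{w}_2c\in\mathrm{B}_2^c(\SO_4)$ and the first assertion actually fails, so this is a defect of the statement rather than of your argument.
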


\begin{proof}
We  use the partition of the Bessel support in Proposition \ref{BesselPartition}.
First, suppose that $t_l(w)=t_n(w')\tilde{w}_n$ for some $n$ with $1\leq n\leq l$ and $w'\in W(\GL_n).$ Then, $\tilde{w}_n\in W(\GL_l).$ However, this is not the case for any $n$. Next, suppose $t_l(w)=ct_l(w')\tilde{w}_l c$ (this is the case that $t_l(w)\in \mathrm{B}_l^c(\SO_{2l})).$ Thus, $ct_l(w)c=t_l(w')\tilde{w}_l$. Let $w=(w_{i,j})_{i,j=1}^l.$ Then, $$
ct_l(w)c=\left(\begin{matrix}
(w_{i,j})_{i,j=1}^{l-1} & 0 & (w_{i,l})_{i=1}^{l-1} & 0 \\
0 & * & 0 & * \\
(w_{l,j})_{j=2}^{l-1} & 0 & w_{l,l} & 0 \\
0 & * & 0 & *
\end{matrix}\right).
$$

If $l$ is even then, $$
t_l(w')\tilde{w}_l=\left(\begin{matrix}
& w' \\
(w')^* &
\end{matrix}\right).
$$
Hence, $w_{l,l}=0$ and $w_{i,j}=0$ for any $1\leq i,j\leq l-1.$ Thus, $$
w=\left(\begin{matrix}
0_{(l-1)\times(l-1)}& * \\
* & 0
\end{matrix}\right).
$$
But, $w\in W(\GL_l)$ and hence this is not possible. 

If $l$ is odd, we have
$$
t_l(w')\tilde{w}_l=\left(\begin{matrix}
0_{l\times(l-1)} & (w'_{i,1})_{i=1}^{l} & 0_{l\times 1} & (w'_{i,j})_{i=1,j=2}^{l} \\
* & 0 & * & 0
\end{matrix}\right).
$$
Again, $w_{i,j}=0$ for any $1\leq i,j\leq l-1$, but since $w\in W(\GL_l)$, this is not possible. Therefore, $w\notin \mathrm{B}_l^c(\SO_{2l})$ and $w\notin \mathrm{B}_n(\SO_{2l})$ for any $n$. By Proposition \ref{BesselPartition}, $w\notin \mathrm{B}(\SO_{2l}).$

For the second part of the claim, suppose $a\in\GL_l$ is not upper triangular, Then, by the Bruhat decomposition of $\GL_l$, there exists $w\in W(\GL_l)$ such that $w\neq I_l$ and $a=u_1t w u_2$ where $t\in T_{\GL_l}$ and $u_1, u_2\in U_{\GL_l}.$ We obtain $\B_{\pi,\psi}(t_l(a))=\B_{\pi,\psi}(t_l(t)t_l(w))\psi(t_l(u_1))\psi(t_l(u_2)).$ However, $t_l(w)\notin \mathrm{B}(\SO_{2l})$ and hence $\B_{\pi,\psi}(t_l(t)t_l(w))=0.$ Thus, we have proven the proposition.
 \end{proof}

\section{Twists by $\GL_n$ for $n\leq l-2$}
In this section, we calculate the zeta integrals for twists up to $\GL_{l-2}.$
Let $v\in\tau$ be a fixed vector and define $\xi_v\in I(\tau)$ by supp$(\xi_v)=Q_n$ and
$$
\xi_v(l_n(a)u)=\tau(a)v, \forall a\in\GL_n, u\in V_n,
$$
where $l_n(a)=\mathrm{diag}(a,1,a^*)$ and $Q_n=L_n V_n$ is the standard Siegel parabolic of $\SO_{2n+1}.$
Let $f_v=f_{\xi_v}\in I(\tau,\psi^{-1}).$ That is, $f_v(g,a)=\Lambda_\tau(\tau(a)\xi_v(g))$, where $\Lambda_\tau\in \mathrm{Hom}_{U_{\GL_{n}}}(\tau,\psi^{-1}),$ $a\in\GL_n,$ and $g\in\SO_{2n+1}.$ We also fix the Whittaker function $W_v(a)=\Lambda_\tau(\tau(a)v).$ We show that the zeta integrals in this case are  nonzero.

\begin{prop}\label{GLnnonzero}
$\Psi(\B_{\pi,\psi},f_v)= W_v(I_n)$. In particular, we may choose $v\in\tau$ such that $\Psi(\B_{\pi,\psi},f_v)\neq 0.$
\end{prop}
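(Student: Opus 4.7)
The plan is to unfold the zeta integral and show that only the identity coset in $U_{\GL_n}\setminus \GL_n$ contributes. First, since $\xi_v$ is supported on $Q_n = L_n V_n$ and $V_n \subset U_{\SO_{2n+1}}$, the outer sum over $g \in U_{\SO_{2n+1}} \setminus \SO_{2n+1}$ collapses to a sum over representatives $g = l_n(a)$ with $a$ running through $U_{\GL_n} \setminus \GL_n$, on which $f_v(l_n(a), I_n) = \Lambda_\tau(\tau(a) v) = W_v(a)$. A direct block computation using the explicit embedding $\SO_{2n+1}\hookrightarrow \SO_{2l}$ from Section 2 shows that the embedded $l_n(a)$ equals $\mathrm{diag}(I_{l-n-1}, a, I_2, a^*, I_{l-n-1})$, and hence $w^{l,n} l_n(a) (w^{l,n})\inv = t_n(a)$. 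This yields
\[
\Psi(\B_{\pi,\psi}, f_v) = \sum_{a \in U_{\GL_n} \setminus \GL_n} W_v(a) \sum_{r \in R^{l,n}} \B_{\pi,\psi}(r\, t_n(a)).
\]

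Next I would analyze the inner sum using the Bruhat decomposition $a = u_1 t w u_2$ of $\GL_n$ together with Proposition \ref{Besselprop}. The factor $t_n(u_2) \in U_{\SO_{2l}}$ pulls out on the right as $\psi(t_n(u_2))$. Since $t_n(u_1) \in U_{\SO_{2l}}$ normalizes $R^{l,n}$ (a short computation shows that conjugation sends $x \mapsto x u_1\inv$ and $x' \mapsto u_1^* x'$, both invertible operations), the substitution $r \mapsto t_n(u_1)\inv r\, t_n(u_1)$ is a bijection on $R^{l,n}$; this allows $t_n(u_1)$ to be pulled out on the left with a factor $\psi(t_n(u_1))$. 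The inner sum becomes $\psi(t_n(u_1 u_2)) \sum_{r \in R^{l,n}} \B_{\pi,\psi}(r\, t_n(t) t_n(w))$. Examining the action of $t_n(w)$ on the simple roots of $\SO_{2l}$ (in the spirit of Proposition \ref{uppertriangular}) shows that $t_n(w) \notin \mathrm{B}(\SO_{2l})$ for $w \neq I_n$, since some simple root must get mapped to a positive non-simple root; combined with Proposition \ref{BesselPartition}, this forces the inner sum to vanish unless $w = I_n$. When $w = I_n$, Lemma \ref{center} forces $t_n(t)$ to be central in $\SO_{2l}$, hence $t = I_n$, pinning $a$ into the identity coset of $U_{\GL_n}\setminus \GL_n$.

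Finally, for $a = I_n$ the inner sum reduces to $\sum_{r \in R^{l,n}} \B_{\pi,\psi}(r)$. A Bruhat analysis of each nontrivial $r \in R^{l,n}$ in $\SO_{2l}$ shows that its Weyl component lies outside $\mathrm{B}(\SO_{2l})$, so only $r = I_{2l}$ contributes and the sum equals $\B_{\pi,\psi}(I_{2l}) = 1$. Combining everything gives $\Psi(\B_{\pi,\psi}, f_v) = W_v(I_n)$. The second assertion is immediate: since $\Lambda_\tau \neq 0$, one may choose $v \in \tau$ with $W_v(I_n) = \Lambda_\tau(v) \neq 0$.

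The main technical obstacle lies in the Bruhat-theoretic claims of the second and third paragraphs — verifying the normalization/change-of-variable claim on $R^{l,n}$, and ruling out non-identity Weyl elements $w$ as well as non-identity $r \in R^{l,n}$ via the combinatorics of root actions. These ultimately rest on the precise characterization of $\mathrm{B}(\SO_{2l})$ and the partition established in Section \ref{bessel functions}.
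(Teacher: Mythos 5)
Your overall strategy and conclusion are right, and the first and last steps (collapsing the outer sum to $U_{\GL_n}\backslash\GL_n$ via the support of $\xi_v$, the embedding computation $w^{l,n}l_n(a)(w^{l,n})\inv=t_n(a)$, and the choice of a Whittaker vector at the end) match the paper. But the middle reduction has a genuine gap: you apply the cell-support lemmas to the element $t_n(t)t_n(w)$ while the actual argument of $\B_{\pi,\psi}$ is $r\,t_n(t)t_n(w)$ with $r=r_x$ a \emph{lower}-triangular unipotent. The lemma that $\B_{\pi,\psi}$ vanishes off $\bigcup_{w\in\mathrm{B}(\SO_{2l})}B_{\SO_{2l}}wB_{\SO_{2l}}$, and likewise Lemma \ref{center}, only apply after you know the Bruhat cell of the \emph{full} element; since $r\notin B_{\SO_{2l}}$, the Weyl component of $r\,t_n(t)t_n(w)$ is in general not $t_n(w)$ (e.g.\ for $w=I_n$ and $x\neq 0$ the product lies in a nontrivial cell). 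So the step ``$t_n(w)\notin\mathrm{B}(\SO_{2l})$ for $w\neq I_n$, hence the inner sum vanishes'' does not follow as stated, and neither does the subsequent appeal to Lemma \ref{center} for $t$; your three-stage elimination of $w$, then $t$, then $r$ cannot be run in that order.

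The repair is exactly the paper's one-line argument: observe that
$$
r_x\,t_n(a)=t_l(B),\qquad B=\left(\begin{matrix} a & 0\\ xa & I_{l-n-1}\end{matrix}\right)\in\GL_l,
$$
so the whole product lies in the $\GL_l$-Levi, and Proposition \ref{uppertriangular} (whose proof performs the Bruhat decomposition \emph{inside} $\GL_l$, where the lower unipotent is correctly absorbed) kills every term at once unless $B$ is upper triangular, i.e.\ $x=0$ and $a$ upper triangular; Lemma \ref{center} then pins $a$ to the identity coset. With that substitution your proof becomes the paper's.
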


\begin{proof}
By definition, $$\Psi(\B_{\pi,\psi},f_v)
=\sum_{g\in U_{\SO_{2n+1}}\setminus\SO_{2n+1}} \left(\sum_{r\in R^{l,n}} \B_{\pi,\psi}(r w^{l,n} g (w^{l,n})\inv)\right)   f_v(g, I_{n}).
$$
The support of $f_v(g, I_n)$ is contained in $V_n L_n$. Since $V_n\subseteq U_{\SO_{2n+1}}$, we have 
$$\Psi(\B_{\pi,\psi},f_v)
=\sum_{a\in U_{GL_n}\setminus\GL_n} \left(\sum_{r\in R^{l,n}} \B_{\pi,\psi}(r w^{l,n} l_n(a) (w^{l,n})\inv)\right)   W_v(a).
$$
Embedding of $\SO_{2n+1}$ into $\SO_{2l}$ takes $l_n(a)$ to $q_n(a)=\mathrm{diag}(I_{l-n-1},a,I_2,a^*,I_{l-n-1}).$ We have $w^{l,n} q_n(a) (w^{l,n})\inv=\mathrm{diag}(a, I_{2l-2n}, a^*).$ Let $$r_x=\left(\begin{matrix}
I_n & & & & \\
x & I_{l-n-1} & & & \\
& & I_2 & & \\
& & & I_{l-n-1} & \\
& & & x^\prime & I_n
\end{matrix}\right)\in R^{l,n}.$$ Then, $$
r_x w^{l,n} q_n(a) (w^{l,n})\inv
=\left(\begin{matrix}
a & & & & \\
xa & I_{l-n-1} & & & \\
& & I_2 & & \\
& & & I_{l-n-1} & \\
& & & x^\prime a^* & a^*
\end{matrix}\right).
$$
By Proposition \ref{uppertriangular}, $\B_{\pi,\psi}(r_x w^{l,n} q_n(a) (w^{l,n})\inv)=0$ unless $a$ is upper triangular and $x=0.$ Hence, 
$$\Psi(\B_{\pi,\psi},f_v)
=\sum_{a\in T_{\GL_n}} \B_{\pi,\psi}(t_n(a))   W_v(a).
$$
Finally, by Lemma \ref{center}, $\Psi(\B_{\pi,\psi},f_v)
=W_v(I_n).$ If we choose $v$ to be a Whittaker vector for $\tau$, then $W_v(I_n)\neq 0.$ This completes the proof of the proposition.
 \end{proof}

Let $\tilde{f}_v=M(\tau,\psi\inv)f_v$ and $W_v^*(a)=\Lambda_\tau(\tau(d_n a^*)v).$ The following lemma is the analogue of \cite[Lemma 3.6]{LZ} and holds for any $n$. The proof follows similarly.

\begin{lemma}\label{intertwine}
\begin{enumerate}
    \item If $\tilde{f}_v(g, I_n)\neq 0,$ then $g\in Q_n w_n Q_n = Q_n w_n V_n.$
    \item If $x\in V_n$, then $\tilde{f}_v(w_n x, I_n)=W^*_v(I_n).$
    \item For $a\in\GL_n$ and $x\in V_n,$ $\tilde{f}_v(l_n(a)w_n x, I_n)=W_v^*(a).$
\end{enumerate}
\end{lemma}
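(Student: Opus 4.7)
The plan is to prove all three parts by a careful support analysis of the defining sum $\tilde{f}_v(g,I_n)=\sum_{u\in V_n}f_v(w_n u g, d_n)$, using that the support of $f_v(\cdot,d_n)$ equals $Q_n=L_n V_n$ (since $\mathrm{supp}(\xi_v)=Q_n$), together with two structural facts: first, that $w_n V_n w_n^{-1}$ is the opposite unipotent radical $V_n^-$ so $w_n V_n w_n^{-1}\cap Q_n=\{I_{2n+1}\}$; and second, that $w_n l_n(a) w_n^{-1}=l_n(a^*)$, which is a short direct computation using the block form of $w_n$ and the fact that $w_n^2=I_{2n+1}$.

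For part (1), if $\tilde{f}_v(g,I_n)\neq 0$ then some summand $f_v(w_n u g, d_n)$ is nonzero, forcing $w_n u g\in Q_n$, hence $g\in u^{-1}w_n^{-1} Q_n\subseteq V_n w_n Q_n\subseteq Q_n w_n Q_n$. The identity $Q_n w_n Q_n=Q_n w_n V_n$ then follows from $w_n L_n w_n^{-1}=L_n$ (so that $L_n w_n = w_n L_n$), combined with $Q_n=L_n V_n$ and $L_n$ normalizing $V_n$.

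For part (2), I evaluate $\tilde{f}_v(w_n x, I_n)=\sum_{u\in V_n} f_v(w_n u w_n x, d_n)$. For each $u\in V_n$, the element $w_n u w_n$ lies in $V_n^-$, and $w_n u w_n\cdot x\in Q_n$ iff $w_n u w_n\in Q_n$ (since $x\in V_n\subseteq Q_n$), which forces $u=I_{2n+1}$. Only the trivial term survives, giving
\[
\tilde{f}_v(w_n x, I_n)=f_v(x, d_n)=\Lambda_\tau(\tau(d_n)\xi_v(x))=\Lambda_\tau(\tau(d_n)v)=W_v^*(I_n),
\]
since $\xi_v(x)=v$ for $x\in V_n$.

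For part (3), I combine the previous ideas. Changing variables $u\mapsto l_n(a) u l_n(a)^{-1}$ (valid since $L_n$ normalizes $V_n$) transforms
\[
\tilde{f}_v(l_n(a)w_n x, I_n)=\sum_{u\in V_n}f_v(w_n l_n(a) u w_n x, d_n)=\sum_{u\in V_n}f_v(l_n(a^*) w_n u w_n x, d_n),
\]
using $w_n l_n(a)=l_n(a^*)w_n$. The support analysis from part (2) again forces $u=I_{2n+1}$, so this reduces to
\[
f_v(l_n(a^*) x, d_n)=\Lambda_\tau(\tau(d_n)\tau(a^*)v)=\Lambda_\tau(\tau(d_n a^*)v)=W_v^*(a).
\]
The only technical point worth double-checking is the identity $w_n l_n(a) w_n=l_n(a^*)$, which the analogue in \cite{LZ} already handles; the rest is bookkeeping in the same style as \cite[Lemma 3.6]{LZ}.
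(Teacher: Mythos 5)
Your proof is correct and is essentially the argument the paper intends: the paper does not write out a proof of this lemma at all, but simply points to the analogue \cite[Lemma 3.6]{LZ}, whose proof is exactly this support analysis (only the $u=I_{2n+1}$ term of the intertwining sum survives because $w_nV_nw_n^{-1}\cap Q_n=\{I_{2n+1}\}$, combined with $w_nl_n(a)w_n^{-1}=l_n(a^*)$ and the definition of $\xi_v$). Nothing to correct.
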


In \cite{LZ}, each twist utilizes \cite[Lemma 3.1]{Ni14}. 
For twists up to $\GL_{l-2},$ \cite[Lemma 3.1]{Ni14} would be sufficient. However, for twists by $\GL_{l-1}$ and $\GL_l$, it is expedient to have the below lemma. Note that \cite[Lemma 3.1]{Ni14} is precisely the case when $X=\GL_n$ in the below lemma.

\begin{lemma}\label{Nien}
Suppose $X$ is a subset of $\GL_n$ for which $ux\in X$ for any $u\in U_{\GL_n}$ and $x\in X.$ Let $H$ be a function on $X$ such that $H(ug)=\psi(u)H(g)$ for any $u\in U_{\GL_n}$ and $g\in X.$
Suppose that 
$$\sum_{x\in X} H(x)W_v(x)=0
$$
for any $v\in\tau$ and with $\tau$ running through all generic irreducible representations of $\GL_n.$ Then, $H(x)=0$ for any $x\in X.$
\end{lemma}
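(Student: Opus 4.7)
The plan is to follow the standard argument for Nien's original lemma \cite[Lemma 3.1]{Ni14}, with the only modification being to extend $H$ by zero across $\GL_n \setminus X$ so as to reduce to a statement on all of $\GL_n$.

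First I would extend $H$ to a function $\widetilde H : \GL_n \to \mathbb{C}$ by declaring $\widetilde H \equiv 0$ on $\GL_n \setminus X$. The hypothesis $U_{\GL_n} X \subseteq X$ forces $U_{\GL_n} (\GL_n \setminus X) \subseteq \GL_n \setminus X$, so the transformation rule $\widetilde H(ug) = \psi(u) \widetilde H(g)$ persists on all of $\GL_n$; that is, $\widetilde H \in \mathrm{Ind}_{U_{\GL_n}}^{\GL_n}(\psi)$. In terms of $\widetilde H$, the hypothesis reads $\sum_{g \in \GL_n} \widetilde H(g) W(g) = 0$ for every $W \in \mathcal{W}(\tau, \psi^{-1})$ and every irreducible generic $\tau$.

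The main ingredient is the decomposition
$$
\mathrm{Ind}_{U_{\GL_n}}^{\GL_n}(\psi^{-1}) = \bigoplus_{\tau} \mathcal{W}(\tau, \psi^{-1}),
$$
where $\tau$ runs over all irreducible generic representations of $\GL_n$. This is standard: it follows from semisimplicity of the group algebra of $\GL_n(\mathbb{F}_q)$, Frobenius reciprocity, and uniqueness of Whittaker models. The hypothesis then upgrades to $\sum_{g \in \GL_n} \widetilde H(g) W(g) = 0$ for every $W \in \mathrm{Ind}_{U_{\GL_n}}^{\GL_n}(\psi^{-1})$.

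To conclude, I would test this orthogonality against the characteristic function of a single $U_{\GL_n}$-coset. Fixing $x_0 \in X$, define $W_0 : \GL_n \to \mathbb{C}$ by $W_0(u x_0) = \psi^{-1}(u)$ for $u \in U_{\GL_n}$ and $W_0 \equiv 0$ off $U_{\GL_n} x_0$; since $U_{\GL_n}$ acts freely on the left on $\GL_n$, $W_0$ is well defined and one checks $W_0 \in \mathrm{Ind}_{U_{\GL_n}}^{\GL_n}(\psi^{-1})$. Using $U_{\GL_n} x_0 \subseteq X$, a direct computation gives
$$
0 \;=\; \sum_{x \in X} H(x) W_0(x) \;=\; \sum_{u \in U_{\GL_n}} \psi(u) H(x_0) \cdot \psi^{-1}(u) \;=\; |U_{\GL_n}| \cdot H(x_0),
$$
forcing $H(x_0) = 0$. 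Since $x_0 \in X$ is arbitrary, $H \equiv 0$ on $X$. No step is especially delicate; the only thing that must be checked carefully is that $\widetilde H$ really lies in $\mathrm{Ind}_{U_{\GL_n}}^{\GL_n}(\psi)$, which is immediate from the assumption on $X$.
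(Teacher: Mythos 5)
Your proof is correct, and its key step --- extending $H$ by zero to all of $\GL_n$ using the hypothesis $U_{\GL_n}X\subseteq X$ so that the transformation rule persists --- is exactly the paper's argument; the paper then simply invokes \cite[Lemma 3.1]{Ni14} for the resulting statement on all of $\GL_n$, whereas you additionally reprove that cited lemma via the Gelfand--Graev decomposition $\mathrm{Ind}_{U_{\GL_n}}^{\GL_n}(\psi^{-1})=\bigoplus_{\tau}\mathcal{W}(\tau,\psi^{-1})$ and pairing against the function supported on a single coset $U_{\GL_n}x_0$. Both routes are sound; yours is just the self-contained version of the paper's two-line reduction.
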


\begin{proof}
Let $H'$ be the function on $\GL_n$ defined by $H'(g)=H(g)$ if $g\in X$ and $H'(g)=0$ if $g\notin X.$ Then, $H'(ug)=\psi(u)H'(g)$ for any $u\in U_{\GL_n}$ and $g\in\GL_n.$ By \cite[Lemma 3.1]{Ni14}, $H'\equiv 0$ on $\GL_n$ and hence the lemma follows.
 \end{proof}

The following theorem shows that, for $n\leq l-2,$ the twists by $\GL_n$ determine that the Bessel functions are equal on $\mathrm{B}_n(\SO_{2l}).$ Note that for $\tilde{w}_n$ fixes $\alpha_{l-1}$ and $\alpha_l$ when $n$ is even and switches them when $n$ is odd. Recall that $t_n'=\tilde{t}=\mathrm{diag}(I_{l-1},\frac{-1}{2},-2,I_{l-1})$ if $n$ is odd and $t_n'=I_{2l}$ if $n$ is even. We explain the inclusion of the term $t_n'$ in the below theorem. Let $x\in\mathbb{F}_q$ and $\mathrm{\bold{x}}_{\alpha_l}(x)$ be an element of the root space of $\alpha_l.$ By Proposition \ref{Besselprop}, we have 
$$
\B_{\pi,\psi}(t_n(a) \tilde{w}_n \mathrm{\bold{x}}_{\alpha_l}(x)) = \B_{\pi,\psi}(t_n(a) \tilde{w}_n)(-\psi(x)/2).
$$
When $n$ is odd,
$$
\B_{\pi,\psi}(t_n(a) \tilde{t} \tilde{w}_n \mathrm{\bold{x}}_{\alpha_l}(x)) = \B_{\pi,\psi}(\mathrm{\bold{x}}_{\alpha_{l-1}}(x) t_n(a)\tilde{t}\tilde{w}_n)=\B_{\pi,\psi}( t_n(a)\tilde{t}\tilde{w}_n)( \psi(x)/4).
$$
Since $\psi$ is nontrivial, it follows that $\B_{\pi,\psi}(t_n(a) \tilde{w}_n)=0.$ However, if we include $t_n'=\tilde{t},$ we have
$$
\B_{\pi,\psi}(t_n(a) \tilde{t} \tilde{w}_n \mathrm{\bold{x}}_{\alpha_l}(x)) = \B_{\pi,\psi}(t_n(a)\tilde{t} \tilde{w}_n)(-\psi(x)/2)
$$
and
$$
\B_{\pi,\psi}(t_n(a) \tilde{t} \tilde{w}_n \mathrm{\bold{x}}_{\alpha_l}(x)) = \B_{\pi,\psi}(\mathrm{\bold{x}}_{\alpha_{l-1}}(-2x) t_n(a) \tilde{t} \tilde{w}_n)=\B_{\pi,\psi}(t_n(a) \tilde{t} \tilde{w}_n)( -\psi(x)/2).
$$
Thus, $\B_{\pi,\psi}(t_n(a) \tilde{t} \tilde{w}_n)$ may be nonzero.

When $n$ is even, we have
$$
\B_{\pi,\psi}(t_n(a) \tilde{w}_n \mathrm{\bold{x}}_{\alpha_l}(x))=\B_{\pi,\psi}(\mathrm{\bold{x}}_{\alpha_l}(x)t_n(a) \tilde{w}_n)=\B_{\pi,\psi}(t_n(a) \tilde{w}_n)(-\psi(x)/2)
$$
and so $t_n'=I_{2l}$ suffices. Thus, we see that the inclusion of the term $t_n'$ is necessary in the below theorem.

\begin{thm}\label{old GL_n}
Let $\pi$ and $\pi^\prime$ be irreducible cuspidal $\psi$-generic representations of $\SO_{2l}$ which share the same central character. If $\gamma(\pi\times\tau,\psi)=\gamma(\pi^\prime\times\tau,\psi)$ for all irreducible generic representations $\tau$ of $\GL_n$, then
$\B_{\pi,\psi}(t_n(a)  t_n' \tilde{w}_n)=\B_{\pi^\prime,\psi}(t_n(a) t_n'  \tilde{w}_n)$ for any $a\in\GL_n.$
\end{thm}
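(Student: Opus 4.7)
The plan is to exploit the functional equation defining the gamma factor in Proposition \ref{gammafactor}. From $\gamma(\pi\times\tau,\psi)\Psi(\B_{\pi,\psi},f_v)=\Psi(\B_{\pi,\psi},\tilde{f}_v)$ and the analogous identity for $\pi'$, combined with Proposition \ref{GLnnonzero} (which says $\Psi(\B_{\pi,\psi},f_v)=W_v(I_n)=\Psi(\B_{\pi',\psi},f_v)$ is independent of the choice between $\pi$ and $\pi'$), the hypothesis $\gamma(\pi\times\tau,\psi)=\gamma(\pi'\times\tau,\psi)$ immediately gives
$$\Psi(\B_{\pi,\psi},\tilde{f}_v)=\Psi(\B_{\pi',\psi},\tilde{f}_v)$$
for every irreducible generic $\tau$ of $\GL_n$ and every $v\in\tau$. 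The theorem will then follow once we compute $\Psi(\B_{\pi,\psi},\tilde{f}_v)$ explicitly enough to recover $\B_{\pi,\psi}(t_n(a)t_n'\tilde{w}_n)$.

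Next, I would compute $\Psi(\B_{\pi,\psi},\tilde{f}_v)$ directly. By Lemma \ref{intertwine}(1) the support of $\tilde{f}_v(\cdot,I_n)$ lies in $Q_nw_nV_n=L_nV_nw_nV_n$, so after absorbing $V_n\subset U_{\SO_{2n+1}}$ into the Whittaker transformation and choosing coset representatives for $U_{\GL_n}\setminus\GL_n$ on the Levi, and writing a residual $V_n$-sum on the right of $w_n$, Lemma \ref{intertwine}(3) gives
$$\Psi(\B_{\pi,\psi},\tilde{f}_v)=\sum_{a\in U_{\GL_n}\setminus\GL_n}\sum_{x\in V_n}\Biggl(\sum_{r\in R^{l,n}}\B_{\pi,\psi}\bigl(rw^{l,n}l_n(a)w_nx(w^{l,n})^{-1}\bigr)\Biggr)W_v^*(a).$$
Using that the image of $w_n\in\SO_{2n+1}$ inside $\SO_{2l}$ is $t_n'\hat{w}_n$ and that $\tilde{w}_n=w^{l,n}\hat{w}_n(w^{l,n})^{-1}$, conjugation yields $w^{l,n}l_n(a)w_n(w^{l,n})^{-1}=t_n(a)t_n'\tilde{w}_n$ (checking that $t_n'$ is fixed by conjugation by $w^{l,n}$ since $t_n'$ only affects the central two coordinates which $w^{l,n}$ leaves in place). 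Thus the inner argument of $\B_{\pi,\psi}$ becomes $r\,t_n(a)t_n'\tilde{w}_n\,\bigl(w^{l,n}x(w^{l,n})^{-1}\bigr)$.

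I would then identify which contributions survive. The conjugated groups $R^{l,n}$ and $w^{l,n}V_n(w^{l,n})^{-1}$ decompose into pieces that either sit inside $U_{\SO_{2l}}$ on which $\tilde{w}_n$ acts trivially on simple root directions (contributing a value of $\psi$) and pieces that push the element into Bruhat cells attached to Weyl elements outside $\mathrm{B}(\SO_{2l})$: by Proposition \ref{BesselPartition} and the vanishing criterion preceding it, these latter terms kill $\B_{\pi,\psi}$. After this cleanup, the $x$- and $r$-sums collapse to a combinatorial constant $C$ independent of $\pi$, giving
$$\Psi(\B_{\pi,\psi},\tilde{f}_v)=C\sum_{a\in U_{\GL_n}\setminus\GL_n}\B_{\pi,\psi}(t_n(a)t_n'\tilde{w}_n)\,W_v^*(a).$$
Setting $H(a)=\B_{\pi,\psi}(t_n(a)t_n'\tilde{w}_n)-\B_{\pi',\psi}(t_n(a)t_n'\tilde{w}_n)$, Proposition \ref{Besselprop} shows $H(ua)=\psi(u)H(a)$ for $u\in U_{\GL_n}$. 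Varying $v$ over $\tau$ makes $W_v^*$ run through the Whittaker model of the contragredient (up to the right translation by $d_n$), and varying $\tau$ over all irreducible generic representations of $\GL_n$, Lemma \ref{Nien} applied with $X=\GL_n$ forces $H\equiv 0$, which is the claimed equality.

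The main obstacle I expect is the bookkeeping in the second step: one must carefully track how $w^{l,n}$ conjugates the subgroups $V_n$ and $R^{l,n}$ into explicit subgroups of $\SO_{2l}$, split each into a unipotent piece on which $\tilde{w}_n$ behaves predictably and an ``error'' piece, and prove the error pieces push the argument of $\B_{\pi,\psi}$ into Bruhat cells outside $\mathrm{B}_n(\SO_{2l})$. The parity-dependent appearance of $t_n'=\tilde{t}$ versus $t_n'=I_{2l}$, explained in the paragraph preceding the theorem, must be handled uniformly, and one needs to distinguish the various root subgroups crossing $\tilde{w}_n$ so that the character matching (of $\psi$ with itself) actually occurs rather than forcing an obstruction that would kill the whole sum.
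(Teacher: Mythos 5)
Your proposal follows essentially the same route as the paper: equate $\Psi(\cdot,\tilde f_v)$ via Proposition \ref{GLnnonzero} and the functional equation, unwind $\tilde f_v$ with Lemma \ref{intertwine}, conjugate everything past $w^{l,n}$ to produce $t_n(a)t_n'\tilde w_n$, and finish with Lemma \ref{Nien}. The only small difference is in the bookkeeping you flag at the end: in the paper's computation no terms are killed by Bruhat-cell vanishing — both $w^{l,n}V_n(w^{l,n})^{-1}$ (after passing $\tilde x$ across $(w^{l,n})^{-1}$) and $R^{l,n}$ (after passing $r_{ax}$ across $\tilde w_n$) land entirely in unipotent subgroups on which $\psi$ is trivial, so the sums contribute exactly $|V_n|\,|R^{l,n}|$.
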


\begin{proof}
By Proposition \ref{GLnnonzero}, we have $\Psi(\B_{\pi,\psi},f_v)=\Psi(\B_{\pi^\prime,\psi},f_v)=W_v(I_n).$ From our assumption, $\gamma(\pi\times\tau,\psi)=\gamma(\pi^\prime\times\tau,\psi)$ for all irreducible generic representations $\tau$ of $\GL_n$, and hence it follows that $\Psi(\B_{\pi,\psi},\tilde{f_v})=\Psi(\B_{\pi^\prime,\psi},\tilde{f}_v).$ By definition,
$$\Psi(\B_{\pi,\psi},\tilde{f}_v)
=\sum_{g\in U_{\SO_{2n+1}}\setminus\SO_{2n+1}} \left(\sum_{r\in R^{l,n}} \B_{\pi,\psi}(r w^{l,n} g (w^{l,n})\inv)\right)   \tilde{f}_v(g, I_{n}).
$$
By Lemma \ref{intertwine},
$$\Psi(\B_{\pi,\psi},\tilde{f}_v)
=\sum_{\substack{a\in U_{GL_n}\setminus\GL_{n} \\ x\in V_n}} \left(\sum_{r\in R^{l,n}} \B_{\pi,\psi}(r w^{l,n} l_n(a) w_n x (w^{l,n})\inv)\right)   \tilde{f}_v(l_n(a) w_n x, I_{n}).
$$

The embedding of $\SO_{2n+1}$ into $\SO_{2l}$ takes $w_n$ to $t_n'\hat{w}_n$ (see \S\ref{BesselSupport}) and $l_n(a)$ to the element $q_n(a)=\mathrm{diag}(I_{l-n-1}, a, I_2, a^*, I_{l-n-1}).$
The embedding also takes the unipotent element $x=\left(\begin{matrix}
I_n & * & * \\
& 1 & * \\
& & I_n
\end{matrix}\right)$ to $\tilde{x}=\mathrm{diag}(I_{l-n-1},\left(\begin{matrix}
I_n & * & * \\
& I_2 & * \\
& & I_n
\end{matrix}\right), I_{l-n-1}),$ and we have
$$
\tilde{x}(w^{l,n})\inv=(w^{l,n})\inv\left(\begin{matrix}
I_n & 0 &  * & 0 & * \\
& I_{l-n-1} & 0 & 0 & 0\\
& & I_2 & 0 & * \\
& & & I_{l-n-1} & 0 \\
& & & & I_n
\end{matrix}\right).
$$
The character is trivial on the last unipotent matrix and hence, by Lemma \ref{intertwine},
$$\Psi(\B_{\pi,\psi},\tilde{f}_v)
=|V_n| \sum_{a\in U_{GL_n}\setminus\GL_{n}} \left(\sum_{r\in R^{l,n}} \B_{\pi,\psi}(r w^{l,n} q_n(a) t'_n \hat{w}_n (w^{l,n})\inv)\right)   W_v^*(a).
$$

Next, $w^{l,n} q_n(a)=t_n(a) w^{l,n}$ where $t_n(a)=\mathrm{diag}(a,I_{2l-2n}, a^*).$ Also, $w^{l,n}t_n'=t_n'w^{l,n}.$ Let $$r_x=\left(\begin{matrix}
I_n & & & & \\
x & I_{l-n-1} & & & \\
& & I_2 & & \\
& & & I_{l-n-1} & \\
& & & x^\prime & I_n
\end{matrix}\right)\in R^{l,n}.$$ Then, $r_x t_n(a) t_n'=t_n(a) t_n' r_{ax}.$ Recall that $\tilde{w}_n=w^{l,n}\hat{w}_n(w^{l,n})\inv.$ Then,
$$\Psi(\B_{\pi,\psi},\tilde{f}_v)
=|V_n| \sum_{a\in U_{GL_n}\setminus\GL_{n}} \left(\sum_{r_x\in R^{l,n}} \B_{\pi,\psi}(t_n(a) t_n' r_{ax} \tilde{w}_n)\right)   W_v^*(a).
$$
Note that
$$r_{ax}\tilde{w}_n=\tilde{w}_n\left(\begin{matrix}
I_n & &&  x'(a^*) & \\
& I_{l-n-1} & & & xa \\
& & I_2 & & \\
& & & I_{l-n-1} & \\
& & & & I_n
\end{matrix}\right).$$ The character is trivial on the last unipotent element and hence
$$\Psi(\B_{\pi,\psi},\tilde{f}_v)
=|V_n||R^{l,n}| \sum_{a\in U_{GL_n}\setminus\GL_{n}}  \B_{\pi,\psi}(t_n(a) t_n'  \tilde{w}_n)   W_v^*(a).
$$
Since $\Psi(\B_{\pi,\psi},\tilde{f_v})=\Psi(\B_{\pi^\prime,\psi},\tilde{f_v}),$ it follows that $$0=\sum_{a\in U_{GL_n}\setminus\GL_{n}}  (\B_{\pi,\psi}-\B_{\pi',\psi})(t_n(a) t_n'  \tilde{w}_n)   W_v^*(a).$$

Let $f$ be the function on $\GL_n$ defined by $f(a)=(\B_{\pi,\psi}-\B_{\pi',\psi})(t_n(a) t_n'  \tilde{w}_n).$ Then, $f(ua)=\psi(u)f(a)$ and $$0=\sum_{a\in U_{GL_n}\setminus\GL_{n}}  f(a)   W_v^*(a).$$ By Lemma \ref{Nien}, $f(a)=0$ for any $a\in\GL_n$ which proves the theorem.
 \end{proof}

As a consequence, we see that the twists by $\GL_n$ determine the Bessel functions on the Bruhat cells for $w\in\mathrm{B}_n(\SO_{2l}).$

\begin{thm}\label{GL_n}
Let $\pi$ and $\pi^\prime$ be irreducible cuspidal $\psi$-generic representations of $\SO_{2l}$ which share the same central character. If $\gamma(\pi\times\tau,\psi)=\gamma(\pi^\prime\times\tau,\psi)$ for all irreducible generic representations $\tau$ of $\GL_n$, then
$\B_{\pi,\psi}(b_1wb_2)=\B_{\pi^\prime,\psi}(b_1wb_2)$ for any $b_1,b_2\in B_{\SO_{2l}}$ and $w\in\mathrm{B}_n(\SO_{2l}).$
\end{thm}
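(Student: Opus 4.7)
My plan is to reduce the equality on the Bruhat double coset $B_{\SO_{2l}}\, w\, B_{\SO_{2l}}$ to equality at the distinguished representatives $t_n(a)\, t_n'\, \tilde{w}_n$ already covered by Theorem~\ref{old GL_n}. The reduction proceeds via the bi-equivariance of $\B_{\pi,\psi}$ under $U_{\SO_{2l}}$ (Proposition~\ref{Besselprop}), followed by a matrix computation to put $tw$ in canonical form, followed by a vanishing analysis in the ``middle-torus'' variable; a small remainder is handled using the shared central character.

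The first step is routine. Writing $b_1 = u_1 t_1$ and $b_2 = t_2 u_2$ with $u_i \in U_{\SO_{2l}}$ and $t_i \in T_{\SO_{2l}}$, and using that $w$ normalizes the split torus, one has $b_1 w b_2 = u_1 (tw) u_2$ for a single $t \in T_{\SO_{2l}}$, so by Proposition~\ref{Besselprop} it suffices to prove $\B_{\pi,\psi}(tw) = \B_{\pi',\psi}(tw)$ for all $t \in T_{\SO_{2l}}$ and $w \in \mathrm{B}_n(\SO_{2l})$. For $w = t_n(w') \tilde{w}_n$ with $w' \in W(\GL_n)$, a direct block-matrix computation (in the spirit of the proof of Proposition~\ref{uppertriangular}) gives
\[
tw = t_n(a)\, s\, \tilde{w}_n,
\]
where $a = \mathrm{diag}(t_1,\ldots,t_n)\, w' \in \GL_n$ and $s \in T_{\SO_{2l}}$ is the ``middle-torus'' element carrying the coordinates $t_{n+1},\ldots,t_l$ and acting as identity on the first and last $n$ diagonal positions; in particular $s$ commutes with $t_n(a)$.

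The heart of the argument is a vanishing analysis in $s$. For each simple root $\alpha$ with $\tilde{w}_n \alpha$ also simple, moving a root-space element $\mathbf{x}_\alpha(x)$ past $\tilde{w}_n$ and $s$ and comparing right equivariance of $\B_{\pi,\psi}$ under $U_\alpha$ with left equivariance under $U_{\tilde{w}_n\alpha}$ produces the identity
\[
\psi(c_\alpha\, x)\, \B_{\pi,\psi}(t_n(a)\, s\, \tilde{w}_n) = \psi\bigl(\eta_\alpha\, c_{\tilde{w}_n\alpha}\,(\tilde{w}_n\alpha)(s)\, x\bigr)\, \B_{\pi,\psi}(t_n(a)\, s\, \tilde{w}_n)
\]
for every $x \in \mathbb{F}_q$, where $c_\beta$ is the $\psi$-coefficient on the simple root space $U_\beta$ and $\eta_\alpha \in \{\pm 1\}$ is the structural sign from $\tilde{w}_n \mathbf{x}_\alpha(x) \tilde{w}_n^{-1} = \mathbf{x}_{\tilde{w}_n\alpha}(\eta_\alpha x)$. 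Nonvanishing of $\B_{\pi,\psi}(t_n'\,\tilde{w}_n)$, guaranteed by Proposition~\ref{GLnnonzero}, fixes the $\eta_\alpha$ so that $t_n'$ itself satisfies the constraints. Applied to all simple roots $\alpha_{n+1},\ldots,\alpha_l$ that $\tilde{w}_n$ sends to simple roots, the constraints force $s_{n+1} = \cdots = s_{l-1}$ and then $s_{l-1}^2 = 1$, so $\B_{\pi,\psi}(t_n(a)\, s\, \tilde{w}_n) = 0$ unless $s$ is one of the two elements $t_n'$ or $s' := (-I_{2l})\, t_n(-I_n)\, t_n'$.

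When $s = t_n'$ the equality is Theorem~\ref{old GL_n}. For $s = s'$, I exploit that $-I_{2l}$ is central in $\SO_{2l}$ and that $t_n(-I_n)\, t_n(a) = t_n(-a)$, so
\[
t_n(a)\, s'\, \tilde{w}_n = (-I_{2l})\, t_n(-a)\, t_n'\, \tilde{w}_n,
\]
whence $\B_{\pi,\psi}(tw) = \omega_\pi(-I_{2l})\, \B_{\pi,\psi}(t_n(-a)\, t_n'\, \tilde{w}_n)$. Theorem~\ref{old GL_n} applied with parameter $-a$, together with $\omega_\pi(-I_{2l}) = \omega_{\pi'}(-I_{2l})$ from the shared central character hypothesis, yields the identity. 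The main technical obstacle is the sign bookkeeping in the vanishing step: verifying that the signs $\eta_\alpha$ are compatible with $t_n'$ being a solution and that the quadratic equation isolates exactly the pair $\{t_n', s'\}$, which requires careful tracking of the character coefficients $c_{\alpha_i} = 1$ for $i \le l-2$, $c_{\alpha_{l-1}} = 1/4$, $c_{\alpha_l} = -1/2$, together with the action of $\tilde{w}_n$ on $\alpha_{l-1}$ and $\alpha_l$ (which depends on the parity of $n$).
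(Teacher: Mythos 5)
Your proposal is correct and follows essentially the same route as the paper's proof: reduce to torus representatives $tw$, use the root-space equivariance of the Bessel function together with the explicit action of $\tilde{w}_n$ on the simple roots $\alpha_{n+1},\dots,\alpha_l$ to force the middle-torus part to be $t_n'$ up to the central element $-I_{2l}$, and then invoke Theorem~\ref{old GL_n} together with the shared central character. The only cosmetic difference is that the paper absorbs the sign directly into $-t_n(a)\tilde{t}\tilde{w}_n$ rather than writing $(-I_{2l})t_n(-a)t_n'\tilde{w}_n$, which is the same computation.
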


\begin{proof}
We have $b_1wb_2=u_1twu_2$ for some $t\in T_{\SO_{2l}},$ and $u_1, u_2, \in U_{\SO_{2l}}.$ By Proposition \ref{Besselprop}, it suffices to show that $\B_{\pi,\psi}(tw)=\B_{\pi',\psi}(tw).$ We shall see that the support of these functions on $T_{\SO_{2l}}\mathrm{B}_n(\SO_{2l})$ is contained in the set $\{t_n(a)t_n'\tilde{w}_n , | , a\in\GL_n \}.$ Suppose that $\B_{\pi,\psi}(tw)\neq 0.$ By definition of $\mathrm{B}_n(\SO_{2l}),$ there exists $w'\in W(\GL_n)$ such that $w=t_n(w')\tilde{w}_n.$ Recall that \begin{align*}
    \tilde{w}_{n}\alpha_i &= \alpha_{n-i} \, \, \mathrm{for} \, \, 1\leq i \leq n-1,\\
    \tilde{w}_n\alpha_n (t) &= t_1\inv t_{n+1}\inv,\\
    \tilde{w}_n\alpha_i&=\alpha_i \, \, \mathrm{for} \, \, n+1\leq i \leq l-2,\\
    \tilde{w}_n \alpha_{l-1}&=\left\{\begin{array}{cc}
\alpha_l & \mathrm{if} \, n \, \mathrm{is} \,  \mathrm{odd,} \\
   \alpha_{l-1}  & \mathrm{if} \, n \, \mathrm{is} \,  \mathrm{even},
   \end{array}\right.\\
   \tilde{w}_n \alpha_{l}&=\left\{\begin{array}{cc}
\alpha_{l-1} & \mathrm{if} \, n \, \mathrm{is} \,  \mathrm{odd,} \\
   \alpha_{l}  & \mathrm{if} \, n \, \mathrm{is} \,  \mathrm{even.} 
\end{array}\right.
\end{align*}
Also $t_n(w')\alpha_i=\alpha_i$ for any $n+1 \leq i\leq l.$  Let $x\in\mathbb{F}_q$ and $\mathrm{\bold{x}}_{\alpha_i}(x)$ be an element of the root space of $\alpha_i.$ By Proposition \ref{Besselprop}, for $n+1 \neq i\leq l-2$ we have 
$$
\B_{\pi,\psi}(tw \mathrm{\bold{x}}_{\alpha_i}(x)) = \B_{\pi,\psi}(t w)\psi(x).
$$
On the other hand,
$$
\B_{\pi,\psi}(tw \mathrm{\bold{x}}_{\alpha_i}(x)) = \B_{\pi,\psi}(tw)\psi(\alpha_i(t)x).
$$
Since $\psi$ is nontrivial, we must have $\alpha_i(t)=1$ for any $n+1\leq i \leq l-2.$ That is, $t_i=t_{i+1}$ for any $n+1\leq i \leq l-2.$

Similarly,
$$
\B_{\pi,\psi}(t w \mathrm{\bold{x}}_{\alpha_l}(x)) = \B_{\pi,\psi}(t w)(-\psi(x)/2)
$$
and 
$$
\B_{\pi,\psi}(t w \mathrm{\bold{x}}_{\alpha_{l-1}}(x)) = \B_{\pi,\psi}(t w)(\psi(x)/4).
$$
When $n$ is odd,
$$
\B_{\pi,\psi}(t w \mathrm{\bold{x}}_{\alpha_l}(x)) = \B_{\pi,\psi}(\mathrm{\bold{x}}_{\alpha_{l-1}}(\alpha_{l-1}(t)x) tw)=\B_{\pi,\psi}( tw)(\psi(\alpha_{l-1}(t)x)/4),
$$
and
$$
\B_{\pi,\psi}(t w \mathrm{\bold{x}}_{\alpha_{l-1}}(x)) = \B_{\pi,\psi}(\mathrm{\bold{x}}_{\alpha_{l}}(\alpha_{l}(t)x) tw)=\B_{\pi,\psi}( tw)(-\psi(\alpha_{l}(t)x)/2),
$$
Since $\psi$ is nontrivial, $\frac{\alpha_{l-1}(t)}{4}=\frac{-1}{2}$ and  $\frac{-\alpha_{l}(t)}{2}=\frac{1}{4}.$ Set $t=\mathrm{diag}(t_1,\dots,t_l,t_l\inv,\dots,t_1\inv).$ We find that $t_{l-1}=\pm 1$ and $t_l=\mp \frac{1}{2}.$ Furthermore, $t_i=t_{i+1}=t_{l-1}=\pm1$ for any $n+1\leq i \leq l-2.$ Let $a=\mathrm{diag}(t_1,\dots,t_n)w'.$ Then $tw=t_n(a)\tilde{t}\tilde{w}_n$ if $t_{l-1}=1$ and by Theorem \ref{old GL_n}, we have $\B_{\pi,\psi}(tw)=\B_{\pi',\psi}(tw).$ If $t_{l-1}=-1,$ then $tw=-t_n(a)\tilde{t}\tilde{w}_n$. Since $\pi$ and $\pi'$ have the same central character, say $\chi$, by Theorem \ref{old GL_n}, \begin{align*}
    \B_{\pi,\psi}(tw) &= \B_{\pi,\psi}(-t_n(a)\tilde{t}\tilde{w}_n) \\ &=\chi(-1)\B_{\pi,\psi}(t_n(a)\tilde{t}\tilde{w}_n)\\
    &=\chi(-1)\B_{\pi',\psi}(t_n(a)\tilde{t}\tilde{w}_n)\\
    &=\B_{\pi',\psi}(-t_n(a)\tilde{t}\tilde{w}_n)\\
    &=\B_{\pi',\psi}(tw).
\end{align*}
This proves the corollary when $n$ is odd.

Suppose now that $n$ is even. We have
$$
\B_{\pi,\psi}(t w \mathrm{\bold{x}}_{\alpha_l}(x)) = \B_{\pi,\psi}(\mathrm{\bold{x}}_{\alpha_{l}}(\alpha_{l}(t)x) tw)=\B_{\pi,\psi}( tw)(\psi(\alpha_{l}(t)x)/4),
$$
and
$$
\B_{\pi,\psi}(t w \mathrm{\bold{x}}_{\alpha_{l-1}}(x)) = \B_{\pi,\psi}(\mathrm{\bold{x}}_{\alpha_{l-1}}(\alpha_{l-1}(t)x) tw)=\B_{\pi,\psi}( tw)(-\psi(\alpha_{l-1}(t)x)/2),
$$
Since $\psi$ is nontrivial, $\alpha_{l-1}(t)=1$ and  $\alpha_{l}(t)=1.$ Set $t=\mathrm{diag}(t_1,\dots,t_l,t_l\inv,\dots,t_1\inv).$ We find that $t_{l-1}=t_l=\pm 1.$ Furthermore, $t_i=t_{i+1}=t_{l-1}=\pm1$ for any $n+1\leq i \leq l-2.$ Let $a=\mathrm{diag}(t_1,\dots,t_n)w'.$ Then $tw=t_n(a)\tilde{w}_n$ if $t_{l-1}=1$ and by Theorem \ref{old GL_n}, we have $\B_{\pi,\psi}(tw)=\B_{\pi',\psi}(tw).$ If $t_{l-1}=-1,$ then $tw=-t_n(a)\tilde{w}_n$. Since $\pi$ and $\pi'$ have the same central character, say $\chi$, by Theorem \ref{old GL_n}, \begin{align*}
    \B_{\pi,\psi}(tw) &= \B_{\pi,\psi}(-t_n(a)\tilde{w}_n) \\ &=\chi(-1)\B_{\pi,\psi}(t_n(a)\tilde{w}_n)\\
    &=\chi(-1)\B_{\pi',\psi}(t_n(a)\tilde{w}_n)\\
    &=\B_{\pi',\psi}(-t_n(a)\tilde{w}_n)\\
    &=\B_{\pi',\psi}(tw).
\end{align*}
This proves the corollary when $n$ is even.
\end{proof}

\section{Twists by $\GL_{l-1}$}
In this section, we consider the twists of $\GL_{l-1}$. The arguments of this section are similar to the previous section, except that $R^{l,n}$ and $w^{l,n}$ are trivial which leads to minor differences.

Let $v\in\tau$ be a fixed vector and define $\xi_v\in I(\tau)$ by supp$(\xi_v)=L_{l-1} V_{l-1}= Q_{l-1}$ and 
$$
\xi_v(l_{l-1}(a)u)=\tau(a)v, \forall a\in\GL_{l-1}, u\in V_{l-1}.
$$
Let $f_v=f_{\xi_v}\in I(\tau,\psi^{-1}).$ That is, for $\Lambda_\tau\in \mathrm{Hom}_{U_{GL_{l-1}}}(\tau,\psi^{-1}), a\in\GL_{l-1},$ and $g\in\SO_{2l-1}$, we have $f_v(g,a)=\Lambda_\tau(\tau(a)\xi_v(g))$. We also let $W_v(a)=\Lambda_\tau(\tau(a)v).$ We show that the zeta integrals in this case are nonzero. 

\begin{prop}\label{GL_{l-1}nonzero}
Suppose $\B_{\pi,\psi}$ be the normalized Bessel function of $\pi$. Then, we have
$\Psi(\B_{\pi,\psi},f_v)= W_v(I_{l-1})$. In particular, we may choose $v\in\tau$ such that $\Psi(\B_{\pi,\psi},f_v)\neq 0.$
\end{prop}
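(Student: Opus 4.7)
The plan is to mirror the proof of Proposition \ref{GLnnonzero} essentially line-by-line, using the fact that for $n=l-1$ the Weyl element $w^{l,n}$ and the set $R^{l,n}$ both collapse to the identity (since the blocks $I_{l-n-1}$ vanish). This means the zeta integral simplifies to
$$
\Psi(\B_{\pi,\psi},f_v) = \sum_{g \in U_{\SO_{2l-1}} \setminus \SO_{2l-1}} \B_{\pi,\psi}(g) f_v(g, I_{l-1}),
$$
where we implicitly use the embedding $\SO_{2l-1} \hookrightarrow \SO_{2l}$ of Section 2 to evaluate $\B_{\pi,\psi}$.

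First I would use that the support of $f_v(g,I_{l-1})$ lies in $Q_{l-1} = L_{l-1} V_{l-1}$, and that $V_{l-1} \subseteq U_{\SO_{2l-1}}$, to reduce the sum to
$$
\Psi(\B_{\pi,\psi},f_v) = \sum_{a \in U_{\GL_{l-1}} \setminus \GL_{l-1}} \B_{\pi,\psi}(l_{l-1}(a))\, W_v(a).
$$
Next, I would trace through the embedding: since $l_{l-1}(a) = \mathrm{diag}(a,1,a^*)$ has identity in the middle block involved in the twist by $M$, its image in $\SO_{2l}$ is simply $t_{l-1}(a) = \mathrm{diag}(a, I_2, a^*)$. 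Then I would invoke the analogue of Proposition \ref{uppertriangular} for $t_{l-1}$: if $w' \in W(\GL_{l-1})$ is nontrivial, then $t_{l-1}(w')\alpha_{l-1}$ sends $t_{l-1}t_l$ to $t_j t_l$ for some $j < l-1$, which is positive but not simple, so $t_{l-1}(w') \notin \mathrm{B}(\SO_{2l})$ and $\B_{\pi,\psi}$ vanishes off upper triangular $a$. Combining with Proposition \ref{Besselprop} to dispose of the unipotent factor and Lemma \ref{center} to force the torus part to be central, the only surviving term is $a = I_{l-1}$ (the middle $I_2$ block rules out the $-I_{2l}$ possibility), giving $\Psi(\B_{\pi,\psi},f_v) = \B_{\pi,\psi}(I_{2l}) W_v(I_{l-1}) = W_v(I_{l-1})$.

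The main obstacle is really only bookkeeping: verifying that the embedding of $\SO_{2l-1}$ into $\SO_{2l}$ identifies $l_{l-1}(a)$ with $t_{l-1}(a)$, and adapting the argument of Proposition \ref{uppertriangular} from $t_l$ to $t_{l-1}$ (which is easier, since we need only check one simple root). The nonvanishing assertion then follows by choosing $v$ so that $W_v$ is a nonzero Whittaker function on $\GL_{l-1}$, which is possible because $\tau$ is generic.
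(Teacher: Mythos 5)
Your proposal follows the paper's proof essentially line by line: reduce the sum to $Q_{l-1}=L_{l-1}V_{l-1}$ using the support of $f_v$, identify $l_{l-1}(a)$ with $t_{l-1}(a)$ under the embedding, kill all non-upper-triangular $a$ and then all non-central torus parts via Lemma \ref{center}, and take $v$ a Whittaker vector. One small caveat: your proposed shortcut for the vanishing step --- checking only the simple root with value $t_{l-1}t_l$ --- does not work, since $t_{l-1}(w')$ fixes that root whenever $w'$ fixes the index $l-1$ (e.g.\ $w'$ transposing the first two indices); but no adaptation is needed at all, because $t_{l-1}(a)=t_l(\mathrm{diag}(a,1))$ and Proposition \ref{uppertriangular} applies verbatim, which is exactly how the paper argues.
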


\begin{proof}
By definition, $$\Psi(\B_{\pi,\psi},f_v)=\sum_{g\in U_{\SO_{2l-1}}\setminus \SO_{2l-1}} \B_{\pi,\psi}(g) f_v(g, I_{l-1}).$$
The support of $f_v(\cdot,I_{l-1})$ is $Q_{l-1}.$ Thus, $$\Psi(\B_{\pi,\psi},f_v)=\sum_{a\in U_{\GL_{l-1}}\setminus \GL_{l-1}} \B_{\pi,\psi}(l_{l-1}(a)) f_v(l_{l-1}(a), I_{l-1}).$$
Under the embedding of $\SO_{2l-1}$ into $\SO_{2l}$, $l_{l-1}(a)$ maps to $t_{l-1}(a).$ Hence, 
$$\Psi(\B_{\pi,\psi},f_v)=\sum_{a\in U_{\GL_{l-1}}\setminus \GL_{l-1}} \B_{\pi,\psi}(t_{l-1}(a)) W_v(a).$$
By Proposition \ref{uppertriangular}, $\B_{\pi,\psi}(t_{l-1}(a))=0$ unless $a$ is upper triangular. Thus, by Lemma $\ref{center}$,
$$\Psi(\B_{\pi,\psi},f_v)=W_v(I_{l-1}).$$ If we choose $v$ to be a Whittaker vector for $\tau$, then $W_v(I_{l-1})\neq 0.$ This proves the proposition.
 \end{proof}

The next theorem shows that the twists by $\GL_{l-1}$ determine that the Bessel functions are equal on a subset of the Bruhat cells in $\mathrm{B}_{l-1}(\SO_{2l}).$ More specifically, by Proposition \ref{Besselprop}, to determine the normalized Bessel functions on a Bruhat cell of $w\in\mathrm{B}_{l-1}(\SO_{2l})$, it is enough to determine it on  $T_{\SO_{2l}}w.$ The below theorem determines the subset of $T_{\SO_{2l}}w$ for $t\in T_{\SO_{2l}}$ such that the $l$-th coordinate is $t_l=1$ if $n$ is odd and $t_l=\frac{1}{2}$ if $n$ is even. Using the central character we can determine $t_l=\pm1$ if $n$ is odd and $t_l=\pm\frac{1}{2}$ if $n$ is even. Later, Theorem \ref{GL_l} shows that the twists by $\GL_l$ determine the remaining part where $t_l\neq\pm1$ if $n$ is odd and $t_l\neq\pm\frac{1}{2}$ if $n$ is even.

\begin{thm}\label{GL_{l-1}}
Let $\pi$ and $\pi^\prime$ be irreducible cuspidal $\psi$-generic representations of $\SO_{2l}$ which share the same central character. If $\gamma(\pi\times\tau,\psi)=\gamma(\pi^\prime\times\tau,\psi)$ for all irreducible generic representations $\tau$ of $\GL_{l-1}$, then we have
$\B_{\pi,\psi}(t_{l-1}(a)  t_{l-1}' \tilde{w}_{l-1})=\B_{\pi^\prime,\psi}(t_{l-1}(a) t_{l-1}'  \tilde{w}_{l-1})$ for any $a\in\GL_{l-1}.$
\end{thm}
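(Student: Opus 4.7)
The approach is to mirror the argument of Theorem \ref{old GL_n}, exploiting the simplification that when $n=l-1$ we have $l-n-1=0$: the matrix $w^{l,l-1}$ equals $I_{2l}$, the group $R^{l,l-1}$ is trivial, and consequently $\tilde{w}_{l-1}=\hat{w}_{l-1}$. These are precisely the ``minor differences'' alluded to in the introductory sentence of this section.

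First I would take $v\in\tau$ a Whittaker vector and form $f_v=f_{\xi_v}\in I(\tau,\psi^{-1})$ with $\xi_v$ supported on the Siegel parabolic $Q_{l-1}$ and $\xi_v(l_{l-1}(a)u)=\tau(a)v$. Proposition \ref{GL_{l-1}nonzero} yields $\Psi(\B_{\pi,\psi},f_v)=W_v(I_{l-1})=\Psi(\B_{\pi',\psi},f_v)$, so the two side integrals coincide and are nonzero for a suitable $v$. Combined with the hypothesis $\gamma(\pi\times\tau,\psi)=\gamma(\pi'\times\tau,\psi)$ and the functional equation of Proposition \ref{gammafactor}, this gives $\Psi(\B_{\pi,\psi},\tilde{f}_v)=\Psi(\B_{\pi',\psi},\tilde{f}_v)$, where $\tilde{f}_v=M(\tau,\psi^{-1})f_v$.

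Next I would unfold this shared value. Since $w^{l,l-1}$ and $R^{l,l-1}$ are trivial, the zeta integral reduces to a single sum over $g\in U_{\SO_{2l-1}}\setminus\SO_{2l-1}$. Lemma \ref{intertwine}(1) confines the support of $\tilde{f}_v(\cdot,I_{l-1})$ to $Q_{l-1}w_{l-1}V_{l-1}$, and Lemma \ref{intertwine}(3) evaluates $\tilde{f}_v(l_{l-1}(a)w_{l-1}x,I_{l-1})=W_v^*(a)$. Under the embedding $\SO_{2l-1}\hookrightarrow\SO_{2l}$ of Section 2, $l_{l-1}(a)\mapsto t_{l-1}(a)$, $w_{l-1}\mapsto t_{l-1}'\hat{w}_{l-1}=t_{l-1}'\tilde{w}_{l-1}$, and each $x\in V_{l-1}$ maps to an element of $U_{\SO_{2l}}$ that can be moved past $t_{l-1}'\tilde{w}_{l-1}$ and, by Proposition \ref{Besselprop}, contributes trivially to the generic character. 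The $V_{l-1}$-sum then collapses to the constant $|V_{l-1}|$, yielding
$$\Psi(\B_{\pi,\psi},\tilde{f}_v)=|V_{l-1}|\sum_{a\in U_{\GL_{l-1}}\setminus\GL_{l-1}}\B_{\pi,\psi}\bigl(t_{l-1}(a)\,t_{l-1}'\,\tilde{w}_{l-1}\bigr)\,W_v^*(a),$$
together with the identical formula for $\pi'$.

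Setting $H(a):=(\B_{\pi,\psi}-\B_{\pi',\psi})(t_{l-1}(a)\,t_{l-1}'\,\tilde{w}_{l-1})$ and subtracting, we obtain $\sum_{a}H(a)W_v^*(a)=0$ for every Whittaker vector $v$ in every irreducible generic $\tau$ of $\GL_{l-1}$. Proposition \ref{Besselprop}, combined with the inclusion $t_{l-1}(U_{\GL_{l-1}})\subseteq U_{\SO_{2l}}$ (whose induced character matches $\psi$ on $U_{\GL_{l-1}}$), gives the equivariance $H(ua)=\psi(u)H(a)$ for $u\in U_{\GL_{l-1}}$, so Lemma \ref{Nien} with $X=\GL_{l-1}$ forces $H\equiv 0$, which is the desired equality. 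The main care point is the embedding step: verifying that $x\in V_{l-1}$ can be pushed past $t_{l-1}'\tilde{w}_{l-1}$ into $U_{\SO_{2l}}$ with trivial character contribution, which involves separately handling the parities of $l-1$ that determine whether $t_{l-1}'=\tilde{t}$ or $t_{l-1}'=I_{2l}$.
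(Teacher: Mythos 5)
Your proposal is correct and follows essentially the same route as the paper's proof: nonvanishing and equality of $\Psi(\B_{\pi,\psi},f_v)$ via Proposition \ref{GL_{l-1}nonzero}, the functional equation to transfer equality to $\Psi(\cdot,\tilde f_v)$, Lemma \ref{intertwine} to restrict to $Q_{l-1}w_{l-1}V_{l-1}$, the embedding computation collapsing the $V_{l-1}$-sum to $|V_{l-1}|$, and Lemma \ref{Nien} to conclude. The only cosmetic difference is at the unipotent step: the paper does not move $\tilde u$ past $t_{l-1}'\tilde w_{l-1}$ but simply observes that the image $\tilde u$ lies in $U_{\SO_{2l}}$ with $\psi(\tilde u)=1$ and applies the right equivariance of Proposition \ref{Besselprop} directly.
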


\begin{proof}
By Proposition \ref{GL_{l-1}nonzero}, $\Psi(\B_{\pi,\psi},f_v)=\Psi(\B_{\pi^\prime,\psi},f_v)=W_v(I_{l-1}).$ From our assumption, $\gamma(\pi\times\tau,\psi)=\gamma(\pi^\prime\times\tau,\psi)$ for all irreducible generic representations $\tau$ of $\GL_{l-1}$, and hence it follows that $\Psi(\B_{\pi,\psi},\tilde{f_v})=\Psi(\B_{\pi^\prime,\psi},\tilde{f_v}).$ By definition,
$$\Psi(\B_{\pi,\psi},\tilde{f}_v)=\sum_{g\in U_{\SO_{2l-1}}\setminus \SO_{2l-1}} \B_{\pi,\psi}(g) \tilde{f}_v(g, I_{l-1}).$$
From Lemma \ref{intertwine}, we get
$$\Psi(\B_{\pi,\psi},\tilde{f}_v)=\sum_{\substack{a\in U_{\GL_{l-1}}\setminus \GL_{l-1} \\ u\in V_{l-1}}} \B_{\pi,\psi}(l_{l-1}(a)w_{l-1} u) W_v^*(a).$$
The embedding of $\SO_{2l-1}$ into $\SO_{2l}$ takes $l_{l-1}(a)$ to $t_{l-1}(a)$ and $w_{l-1}$ to $t_{l-1}'\tilde{w}_{l-1}.$ Let $$
u=\left(\begin{matrix}
I_{l-1} & x & y \\
& 1 & x' \\
& & I_{l-1}
\end{matrix}\right).
$$
Then the embedding takes $u$ to
$$
\tilde{u}=\left(\begin{matrix}
I_{l-1} & x & \frac{x}{2} & y \\
& 1 & & \frac{x'}{2} \\
& & 1 & x' \\
& & & I_{l-1}
\end{matrix}\right).
$$
Since $\psi(\tilde{u})=1,$ we have 
$$\Psi(\B_{\pi,\psi},\tilde{f}_v)=|V_{l-1}|\sum_{a\in U_{\GL_{l-1}}\setminus \GL_{l-1} } \B_{\pi,\psi}(t_{l-1}(a)t_{l-1}'\tilde{w}_{l-1} ) W_v^*(a).$$
From $\Psi(\B_{\pi,\psi},\tilde{f_v})=\Psi(\B_{\pi^\prime,\psi},\tilde{f_v}),$ it follows that $$0=\sum_{a\in U_{GL_{l-1}}\setminus\GL_{l-1}}  (\B_{\pi,\psi}-\B_{\pi',\psi})(t_{l-1}(a) t_{l-1}'  \tilde{w}_{l-1})   W_v^*(a).$$

Let $f$ be the function on $\GL_{l-1}$ defined by $f(a)=(\B_{\pi,\psi}-\B_{\pi',\psi})(t_{l-1}(a) t_{l-1}'  \tilde{w}_{l-1}).$ Then, $f(ua)=\psi(u)f(a)$ and $$0=\sum_{a\in\GL_{l-1}}  f(a)   W_v^*(a).$$ By Lemma \ref{Nien}, $f(a)=0$ for any $a\in\GL_{l-1}$ which gives the theorem.
 \end{proof}

The following corollary shows that Theorems \ref{GL_n} and \ref{GL_{l-1}} hold for the Bessel functions of $c\cdot\pi$ and $c\cdot\pi'$ as well.

\begin{cor}\label{cGL_n}
Let $\pi$ and $\pi^\prime$ be irreducible cuspidal $\psi$-generic representations of $\SO_{2l}$ which share the same central character. If $\gamma(\pi\times\tau,\psi)=\gamma(\pi^\prime\times\tau,\psi)$ for all irreducible generic representations $\tau$ of $\GL_{n}$ for any $n\leq l-1$, then
$\B_{c\cdot\pi,\psi}(t_{n}(a)  t_{n}' \tilde{w}_{n})=\B_{c\cdot\pi^\prime,\psi}(t_{n}(a) t_{n}'  \tilde{w}_{n})$ for any $a\in\GL_{n}.$
\end{cor}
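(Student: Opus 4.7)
The plan is to derive the corollary from Theorems \ref{old GL_n} and \ref{GL_{l-1}} by exploiting the identity $\B_{c\cdot\pi,\psi}(g)=\B_{\pi,\psi}(c\tilde{t}\inv g \tilde{t} c)$ already established in Section \ref{bessel functions}. Concretely, I want to verify that for $n\leq l-1$ and $g=t_n(a)\,t_n'\,\tilde{w}_n$, the conjugate $c\tilde{t}\inv g \tilde{t} c$ equals $g$ again. Once this is in hand, the hypothesis on gamma factors combined with Theorems \ref{old GL_n} and \ref{GL_{l-1}} will yield $\B_{\pi,\psi}(g)=\B_{\pi',\psi}(g)$, and chaining the three equalities
$$
\B_{c\cdot\pi,\psi}(g)=\B_{\pi,\psi}(g)=\B_{\pi',\psi}(g)=\B_{c\cdot\pi',\psi}(g)
$$
gives the desired conclusion.

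First I would record the commutation relations that follow directly from the block structures. Since $n\leq l-1$, the matrix $t_n(a)=\mathrm{diag}(a,I_{2l-2n},a^*)$ has identity entries at positions $l$ and $l+1$ and therefore commutes with both $c$ and $\tilde{t}$. A direct inspection of the definition of $\tilde{t}$ gives $c\tilde{t}c=\tilde{t}\inv$. Finally, the middle $2\times 2$ block $X$ of $\tilde{w}_n$ is either $I_2$ or $J_2$, and each is invariant under conjugation by the swap in positions $l, l+1$, so $c\tilde{w}_n c=\tilde{w}_n$.

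The remaining work is a case split on the parity of $n$. When $n$ is even, $t_n'=I_{2l}$ and $\tilde{w}_n$ has middle block $X=I_2$, hence $\tilde{w}_n$ commutes with $\tilde{t}$, and the product telescopes via $c\tilde{t}\inv t_n(a)\tilde{w}_n\tilde{t} c=t_n(a)\,c\tilde{w}_n c=t_n(a)\tilde{w}_n$. When $n$ is odd, $t_n'=\tilde{t}$ and $X=J_2$ swaps positions $l, l+1$, whence $\tilde{w}_n\tilde{t}\inv=\tilde{t}\tilde{w}_n$; chaining this with $c\tilde{t}c=\tilde{t}\inv$ and $c\tilde{w}_n c=\tilde{w}_n$ causes $c\tilde{t}\inv t_n(a)\tilde{t}\tilde{w}_n\tilde{t} c$ to collapse back to $t_n(a)\tilde{t}\tilde{w}_n=t_n(a)t_n'\tilde{w}_n$. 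The main subtlety here is this parity bookkeeping: the factor $t_n'$ is present exactly when $n$ is odd, and it is precisely what is needed to absorb the extra $\tilde{t}$ produced by the sandwiching, mirroring the role of $t_n'$ in the proof of Theorem \ref{old GL_n}.
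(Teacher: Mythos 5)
Your proposal is correct and follows essentially the same route as the paper: both reduce to Theorems \ref{old GL_n}/\ref{GL_{l-1}} for $\B_{\pi,\psi}$ and $\B_{\pi',\psi}$, and then use the identity $\B_{c\cdot\pi,\psi}(g)=\B_{\pi,\psi}(c\tilde{t}\inv g\tilde{t}c)$ together with the fact that $t_n(a)t_n'\tilde{w}_n$ is fixed by conjugation by $\tilde{t}c$. The only difference is that you verify this conjugation-invariance explicitly (the commutation of $t_n(a)$ with $c$ and $\tilde{t}$, $c\tilde{t}c=\tilde{t}\inv$, $c\tilde{w}_nc=\tilde{w}_n$, and the parity bookkeeping for $t_n'$), which the paper asserts without detail; your computations check out.
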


\begin{proof}
By Theorems \ref{GL_n} and \ref{GL_{l-1}} ,
$\B_{\pi,\psi}(t_{n}(a)  t_{n}' \tilde{w}_{n})=\B_{\pi^\prime,\psi}(t_{n}(a) t_{n}'  \tilde{w}_{n})$ for any $a\in\GL_{n}.$ Also, 
$$\B_{\pi,\psi}(t_{n}(a) t_{n}'  \tilde{w}_{n})
=\B_{\pi,\psi}(c\tilde{t}\inv \tilde{t} ct_{n}(a) t_{n}'  \tilde{w}_{n} c\tilde{t}\inv \tilde{t} c)
=\B_{c\cdot\pi,\psi}( \tilde{t} ct_{n}(a) t_{n}'  \tilde{w}_{n} c\tilde{t}\inv)$$
$$
=\B_{c\cdot\pi,\psi}( t_{n}(a) t_{n}'  \tilde{w}_{n}).
$$ for any $n\leq l-1$. This proves the corollary.
 \end{proof}

The following corollary shows the equivalence of $\gamma$-factors of conjugate representations for $n\leq l-1$. This is not necessary to show the converse theorem, but when later paired with its analogue for $n=l$, Corollary \ref{conj l gamma}, it shows that the $\gamma$-factor cannot distinguish between a representation and its conjugate. In the setting of local fields, this is known \cite{Art, JL14}.

\begin{cor}\label{conj n gamma}
Let $\pi$ be an irreducible cuspidal $\psi$-generic representation of $\SO_{2l}$. Then we have $\gamma(\pi\times\tau,\psi)=\gamma(c\cdot\pi\times\tau,\psi)$ for all irreducible generic representations $\tau$ of $\GL_{n}$ for any $n\leq l-1$.
\end{cor}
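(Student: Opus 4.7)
The plan is to derive the equality of $\gamma$-factors directly from the defining functional equation in Proposition \ref{gammafactor}, by comparing the four zeta integrals $\Psi(\B_{\pi,\psi},f_v)$, $\Psi(\B_{c\cdot\pi,\psi},f_v)$, $\Psi(\B_{\pi,\psi},M(\tau,\psi^{-1})f_v)$, and $\Psi(\B_{c\cdot\pi,\psi},M(\tau,\psi^{-1})f_v)$. Since $c\cdot\pi$ is itself an irreducible cuspidal $\psi$-generic representation of $\SO_{2l}$, every formula already proved for $\pi$ applies to $c\cdot\pi$ without change.

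First I would apply Proposition \ref{GLnnonzero} (in the range $n\leq l-2$) and Proposition \ref{GL_{l-1}nonzero} (in the case $n=l-1$) once to $\pi$ and once to $c\cdot\pi$ to obtain
$$\Psi(\B_{\pi,\psi},f_v) \;=\; W_v(I_n) \;=\; \Psi(\B_{c\cdot\pi,\psi},f_v),$$
which is nonzero for $v$ a Whittaker vector of $\tau$. Next, I would re-run the reduction of the intertwined zeta integral carried out in the proof of Theorem \ref{old GL_n} (respectively Theorem \ref{GL_{l-1}}) for both $\B_{\pi,\psi}$ and $\B_{c\cdot\pi,\psi}$. That reduction uses only Proposition \ref{Besselprop}, Lemma \ref{intertwine}, the embedding formulas, and the definitions of $t_n'$ and $\tilde{w}_n$, so it applies to any $\psi$-generic representation. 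It yields
$$\Psi(\B_{c\cdot\pi,\psi},M(\tau,\psi^{-1})f_v) \;=\; |V_n|\,|R^{l,n}|\sum_{a\in U_{\GL_n}\setminus\GL_n}\B_{c\cdot\pi,\psi}\bigl(t_n(a)\,t_n'\,\tilde{w}_n\bigr)\,W_v^*(a),$$
together with the analogous identity for $\pi$.

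To finish, I would invoke the pointwise identity
$$\B_{\pi,\psi}\bigl(t_n(a)\,t_n'\,\tilde{w}_n\bigr) \;=\; \B_{c\cdot\pi,\psi}\bigl(t_n(a)\,t_n'\,\tilde{w}_n\bigr)$$
established in the second half of the proof of Corollary \ref{cGL_n} via the fact that conjugation by $\tilde{t}c$ fixes $t_n(a)\,t_n'\,\tilde{w}_n$. This makes the two sums termwise equal, so $\Psi(\B_{\pi,\psi},M(\tau,\psi^{-1})f_v) = \Psi(\B_{c\cdot\pi,\psi},M(\tau,\psi^{-1})f_v)$. Dividing by the common nonzero value of the untwisted integral gives
$$\gamma(\pi\times\tau,\psi) \;=\; \frac{\Psi(\B_{\pi,\psi},M(\tau,\psi^{-1})f_v)}{\Psi(\B_{\pi,\psi},f_v)} \;=\; \frac{\Psi(\B_{c\cdot\pi,\psi},M(\tau,\psi^{-1})f_v)}{\Psi(\B_{c\cdot\pi,\psi},f_v)} \;=\; \gamma(c\cdot\pi\times\tau,\psi).$$

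There is no substantive obstacle: the argument is essentially the observation that the Bessel identity tucked inside the proof of Corollary \ref{cGL_n} is already symmetric enough to force the $\gamma$-factors to coincide. The only care required is verifying that $\tilde{t}c$ really does centralize $t_n(a)\,t_n'\,\tilde{w}_n$, which is a short parity-dependent check using the definition of $t_n'$ (namely $\tilde{t}$ for odd $n$ and $I_{2l}$ for even $n$) together with how $c$ and $\tilde{t}$ act on the blocks of $\tilde{w}_n$.
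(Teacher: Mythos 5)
Your proposal is correct and follows essentially the same route as the paper's own proof: show the untwisted integrals agree and are nonzero via Propositions \ref{GLnnonzero} and \ref{GL_{l-1}nonzero}, reduce the intertwined integrals to the sum over $U_{\GL_n}\backslash\GL_n$ as in Theorems \ref{old GL_n} and \ref{GL_{l-1}}, and then apply the conjugation identity $\B_{\pi,\psi}(t_n(a)t_n'\tilde{w}_n)=\B_{c\cdot\pi,\psi}(t_n(a)t_n'\tilde{w}_n)$ from Corollary \ref{cGL_n}. No gaps.
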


\begin{proof}
We abuse notation and consider the zeta integrals simultaneously for each $n\leq l-1.$ By Propositions \ref{GLnnonzero} and \ref{GL_lnonzero} we can choose $v\in\tau$ such that $\Psi(\B_{\pi,\psi},f_v)=\Psi(\B_{c\cdot\pi,\psi},f_v)\neq 0.$ By the proofs of theorems \ref{GL_n} and \ref{GL_{l-1}},
$$\Psi(\B_{\pi,\psi},\tilde{f}_v)=|V_{n}||R^{l,n}|\sum_{a\in U_{\GL_{n}}\setminus \GL_{n} } \B_{\pi,\psi}(t_{n}(a)t_{n}'\tilde{w}_{n} ) W_v^*(a).$$ Since for any $n\leq l-1,$ we have
\begin{align*}
    \B_{\pi,\psi}(t_{n}(a) t_{n}'  \tilde{w}_{n})
&=\B_{\pi,\psi}(c\tilde{t}\inv \tilde{t} ct_{n}(a) t_{n}'  \tilde{w}_{n} c\tilde{t}\inv \tilde{t} c)\\
&=\B_{c\cdot\pi,\psi}( \tilde{t} ct_{n}(a) t_{n}'  \tilde{w}_{n} c\tilde{t}\inv)\\
&=\B_{c\cdot\pi,\psi}( t_{n}(a) t_{n}'  \tilde{w}_{n}),
\end{align*}
it follows that $\Psi(\B_{\pi,\psi},\tilde{f}_v)=\Psi(\B_{c\cdot\pi,\psi},\tilde{f}_v).$ Finally, since $\Psi(\B_{\pi,\psi},f_v)=\Psi(\B_{c\cdot\pi,\psi},f_v)$ is nonzero, we have $\gamma(\pi\times\tau,\psi)=\gamma(c\cdot\pi\times\tau,\psi)$ for all irreducible generic representations $\tau$ of $\GL_{n}$ for any $n\leq l-1$. This concludes the proof of the corollary.
 \end{proof}

\section{Twists by $\GL_l$}

In this section, we consider the twists by $\GL_l.$ These twists are significantly different from the previous twists. Note that we still need to determine the Bessel support on the Bruhat cells corresponding to Weyl elements in $\mathrm{B}_l(\SO_{2l})$ and $\mathrm{B}_l^c(\SO_{2l})$, and the Bessel support on the rest of Bruhat cells corresponding to Weyl elements in $\mathrm{B}_{l-1}(\SO_{2l})$ that are not determined by Theorem \ref{GL_{l-1}}. The difference from previous cases is that $c$ acts nontrivially on these sets.

Let $v\in\tau$ be a fixed vector and define $\xi_v\in I(\tau)$ by supp$(\xi_v)=L_{l} V_{l}= Q_{l}$ and 
$$
\xi_v(l_{l}(a)u)=\tau(a)v,$$ for any $a\in\GL_{l}, u\in V_{l}.
$
Let $f_v=f_{\xi_v}\in I(\tau,\psi^{-1}).$ That is $f_v(g,a)=\Lambda_\tau(\tau(a)\xi_v(g))$, where $\Lambda_\tau\in \mathrm{Hom}_{U_{GL_{l}}}(\tau,\psi^{-1}), a\in\GL_{l},$ and  $g\in\SO_{2l+1}$. Let $W_v(a)=\Lambda_\tau(\tau(a)v).$ The following proposition shows that the zeta integrals in this case are nonzero.

\begin{prop}\label{GL_lnonzero}
$\Psi(\B_{\pi,\psi},f_v)= W_v(w_{l,l})$. In particular, we may choose $v\in\tau$ such that $\Psi(\B_{\pi,\psi},f_v)\neq 0.$
\end{prop}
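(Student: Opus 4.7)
The plan is to follow the template of Propositions \ref{GLnnonzero} and \ref{GL_{l-1}nonzero}, adapted to the subtler embedding $\iota : \SO_{2l} \hookrightarrow \SO_{2l+1}$ of \eqref{equ emb n=l}. Starting from the definition
\[
\Psi(\B_{\pi,\psi}, f_v) = \sum_{g \in U_{\SO_{2l}} \setminus \SO_{2l}} \B_{\pi,\psi}(g)\, f_v(w_{l,l}\, \iota(g), I_l),
\]
I would first note that $w_{l,l} = l_l(\tfrac{1}{2} I_l) \in L_l \subseteq Q_l$ and that $\xi_v$ is supported on $Q_l = L_l V_l$. Hence $f_v(w_{l,l}\iota(g), I_l)$ vanishes unless $w_{l,l}\iota(g) \in Q_l$, which (since $w_{l,l} \in L_l$) is equivalent to $\iota(g) \in Q_l$. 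So the sum reduces to representatives of $U_{\SO_{2l}} \setminus (\iota^{-1}(Q_l) \cap \SO_{2l})$.

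Next I would combine this with the Bessel support. By the Bruhat decomposition $\SO_{2l} = \bigsqcup_w B_{\SO_{2l}} w B_{\SO_{2l}}$ and Proposition \ref{Besselprop}, only cells with $w \in \mathrm{B}(\SO_{2l})$ contribute, and each such cell's contribution reduces to that of $tw$ for $t \in T_{\SO_{2l}}$ modulo the appropriate unipotent piece. For the identity cell $w = I_{2l}$, Lemma \ref{center} further restricts $t$ to the center $Z(\SO_{2l}) = \{\pm I_{2l}\}$; a direct matrix computation with the change-of-basis $\tilde M$ from \eqref{equ emb n=l} shows that $\iota(-I_{2l}) \notin Q_l$ because its $(l+2, l)$ entry is nonzero, so only $t = I_{2l}$ survives. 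For each nontrivial $w \in \mathrm{B}(\SO_{2l})$, I would analogously show that the condition $\iota(tw \cdot u) \in Q_l$ combined with the Bessel support restrictions produce incompatible constraints on the bottom-left $(l+1) \times l$ block of $\iota(tw)$ after conjugation by $\tilde M$.

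At $g = I_{2l}$, $\iota(g) = I_{2l+1}$ and $w_{l,l}\iota(g) = w_{l,l} = l_l(\tfrac{1}{2} I_l)$, so
\[
f_v(w_{l,l}, I_l) = \Lambda_\tau(\tau(\tfrac{1}{2} I_l) v) = W_v(w_{l,l}),
\]
and combined with $\B_{\pi,\psi}(I_{2l}) = 1$ this yields $\Psi(\B_{\pi,\psi}, f_v) = W_v(w_{l,l})$. For the nonvanishing assertion, take $v$ to be a Whittaker vector for $\tau$: since $\tfrac{1}{2} I_l$ is central in $\GL_l$, $W_v(\tfrac{1}{2} I_l) = \omega_\tau(\tfrac{1}{2} I_l) W_v(I_l) \neq 0$, where $\omega_\tau$ is the central character of $\tau$. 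The main obstacle is the second paragraph: the embedding \eqref{equ emb n=l} is twisted by conjugation by $M$, so $\iota(g) \in Q_l$ translates into a delicate set of linear relations among the blocks of $g$, and ruling out every nontrivial Bruhat cell demands careful matrix bookkeeping keyed to the explicit form of $\tilde M$; this is the step that has no analogue in the simpler embeddings used for $n < l$.
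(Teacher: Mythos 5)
Your overall strategy matches the paper's: use the support of $f_v$ to restrict the sum, then use Bessel-support considerations and Lemma \ref{center} to isolate the identity, and finally evaluate at $g=I_{2l}$ to get $W_v(w_{l,l})$; your nonvanishing argument (a Whittaker vector plus centrality of $\tfrac12 I_l$) is equivalent to the paper's choice $v=\tau(w_{l,l}^{-1})\tilde v$. The place where you diverge is the elimination of the non-identity contributions. The paper does \emph{not} run a cell-by-cell analysis of which Bruhat cells of $\SO_{2l}$ land in $Q_l$ under the embedding; it reduces to elements of the form $t_l(a)$ and then invokes Proposition \ref{uppertriangular}, which was proved precisely so that $\B_{\pi,\psi}(t_l(a))=0$ whenever $a$ is not upper triangular, after which Lemma \ref{center} finishes. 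Your proposed replacement --- showing directly that $\iota(twu)\notin Q_l$ for every nontrivial $w$ in the Bessel support --- is plausible (indeed $\iota^{-1}(Q_l)$ sits inside the stabilizer of the isotropic space spanned by the first $l-1$ coordinates, whose Weyl group meets $\mathrm{B}(\SO_{2l})$ only in the identity), but you explicitly leave it as "delicate matrix bookkeeping" to be done. That unexecuted step is the entire content of the reduction, so as written the argument is incomplete; the fix is simply to use Proposition \ref{uppertriangular} instead of re-deriving the support constraint from the geometry of the embedding.

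On the credit side, your observation that $\iota(-I_{2l})\notin Q_l$ (the $(l+2,l)$ entry of the middle block is $\tfrac12$) is correct and is actually a point the paper's proof passes over silently: Lemma \ref{center} alone leaves both $\pm I_{2l}$ in play, and one does need either your computation or the observation that the $t_l=-1$ torus element fails the $Q_l$-membership condition to discard the $-I_{2l}$ term. So your write-up is more careful than the paper on that point, but you must still close the gap on the nontrivial cells (via Proposition \ref{uppertriangular} or by completing your embedding computation) before the proof is complete.
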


\begin{proof}
By definition.
$$
\Psi(\B_{\pi,\psi},f_v)=\sum_{g\in U_{\SO_{2l}}\setminus\SO_{2l}} \B_{\pi,\psi}(g)f_v(w_{l,l}g, I_{l}).
$$
By definition, the support of $f_v(\cdot, I_l)$ is $Q_l.$ Thus, $$
\Psi(\B_{\pi,\psi},f_v)=\sum_{a\in U_{\GL_{l}}\setminus\GL_{l}} \B_{\pi,\psi}(t_l(a))f_v(w_{l,l}l_l(a), I_{l}).
$$
By Proposition \ref{uppertriangular}, $\B_{\pi,\psi}(t_l(a))=0$ for any $a$ which isn't upper triangular. Thus, by Lemma \ref{center}, $\Psi(\B_{\pi,\psi},f_v)=W_v(w_{l,l}).$ If we let $\tilde{v}$ be a Whittaker vector for $\tau$ and $v=\tau(w_{l,l}\inv)\tilde{v},$ then $\Psi(\B_{\pi,\psi},f_v)=W_v(w_{l,l})=W_{\tilde{v}} (I_l)\neq 0.$ This proves the proposition.
 \end{proof}

The support of $\tilde{f}_v=M(\tau,\psi\inv)f_v$ is $Q_l w_l V_l.$ Recall that the twists by $\GL_n$ for $n\leq l-2$ determine the Bessel functions on the Bruhat cells for $\mathrm{B}_n(\SO_{2l}),$ while the twists by $\GL_{l-1}$ only determine a portion of the Bruhat cells for $\mathrm{B}_{l-1}(\SO_{2l}).$  We first show that the rest of the Bruhat cells, along with the Bruhat cells for $\mathrm{B}_{l}(\SO_{2l})$ and $\mathrm{B}_{l}^c(\SO_{2l})$, embed into $Q_l w_l V_l.$ This follows from the next three propositions.

\begin{prop}\label{l-1 twist embed}
Let $w\in\mathrm{B}_{l-1}(\SO_{2l})$ and $t=\mathrm{diag}(t_1,\dots, t_l, t_l\inv,\dots,t_1\inv) \in T_{\SO_{2l}}$. Then $tw\in Q_l w_l V_l$ (via the embedding of $\SO_{2l}$ into $\SO_{2l+1}$) if and only if $t_l\neq 1$ if $l$ is odd, or $t_l\neq \frac{-1}{2}$ if $l$ is even.
\end{prop}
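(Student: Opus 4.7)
Since $w \in \mathrm{B}_{l-1}(\SO_{2l})$, by Proposition \ref{l-1 twist} we may write $w = t_{l-1}(\omega) \tilde{w}_{l-1}$ for some $\omega \in W(\GL_{l-1})$. A direct block multiplication then shows that, in the $(l-1, 2, l-1)$ block decomposition of $\SO_{2l}$,
$$tw = \begin{pmatrix} 0 & 0 & s\omega \\ 0 & M_l & 0 \\ s^*\omega^* & 0 & 0 \end{pmatrix},$$
where $s = \mathrm{diag}(t_1, \ldots, t_{l-1})$, $s^* = J_{l-1} s^{-1} J_{l-1}$, and the central $2\times 2$ block is $M_l = \mathrm{diag}(t_l, t_l^{-1})$ if $l$ is odd and $M_l = \bigl(\begin{smallmatrix} 0 & t_l \\ t_l^{-1} & 0 \end{smallmatrix}\bigr)$ if $l$ is even; the parity dependence traces back to the factor $X \in \{I_2, J_2\}$ appearing in $\tilde{w}_{l-1}$.

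The next ingredient is a Bruhat-theoretic criterion for the Siegel parabolic $Q_l = L_l V_l$ of $\SO_{2l+1}$: the element $w_l$ represents the longest double coset in $W(L_l) \backslash W(\SO_{2l+1}) / W(L_l)$, so $Q_l w_l V_l = Q_l w_l Q_l$ is the open Bruhat cell, and for $g \in \SO_{2l+1}$ written in $(l, 1, l)$ block form, $g \in Q_l w_l Q_l$ if and only if the lower-left $l\times l$ block of $g$ is invertible. Indeed, factoring $g = q_1 w_l q_2$ with $q_i = l_l(a_i) v_i \in Q_l$ shows that this block equals $a_1^* a_2 \in \GL_l$, and conversely an invertible lower-left block admits such a factorization by Gauss elimination. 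Thus it suffices to compute the lower-left $l\times l$ block of $\iota(tw)$ and determine when it is invertible.

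Writing $\iota(tw) = M^{-1}\widetilde{E}M$ where $\widetilde{E}$ is obtained from $tw$ by inserting the middle row and column $(0, \ldots, 0, 1, 0, \ldots, 0)$, one observes that $M = \mathrm{diag}(I_{l-1}, \tilde{M}, I_{l-1})$ only alters rows and columns $l, l+1, l+2$. Combined with the antidiagonal shape of $tw$, this forces the lower-left $l\times l$ block of $\iota(tw)$ to take the form
$$G = \begin{pmatrix} 0 & \star \\ s^*\omega^* & 0 \end{pmatrix}$$
in a $(1, l-1)\times(l-1, 1)$ decomposition, where every entry except the scalar $\star$ at position $(1, l)$ is forced by the shape of $tw$ away from the middle rows and columns. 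Since $s^*\omega^* \in \GL_{l-1}$, $G$ is invertible iff $\star \neq 0$. Computing $\tilde{M}^{-1}$ explicitly and tracing the effect of the conjugation on the $(l+2, l)$ entry of $\iota(tw)$ yields
$$\star = \begin{cases} (2t_l+1)^2/(16\, t_l), & l \text{ even}, \\ -(t_l-1)^2/(8\, t_l), & l \text{ odd}, \end{cases}$$
which vanishes precisely when $t_l = -\tfrac{1}{2}$ (for $l$ even) or $t_l = 1$ (for $l$ odd). This establishes the proposition.

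The main obstacle is the careful bookkeeping around the conjugation by $M$, since it mixes the three adjacent rows and columns $l, l+1, l+2$ via the $3\times 3$ matrix $\tilde{M}$. Using the representative $w = t_{l-1}(\omega) \tilde{w}_{l-1}$ from Proposition \ref{l-1 twist} (rather than $w = t_l(w')\tilde{w}_l$ from Proposition \ref{l-1 Besselpart}) is what makes this tractable: it localizes all parity dependence in the $2\times 2$ central block $M_l$, reducing the final verification to a single explicit scalar identity in each parity.
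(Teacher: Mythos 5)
Your proposal is correct and follows essentially the same route as the paper: both arguments reduce membership of $\iota(tw)$ in $Q_l w_l V_l$ to the invertibility of its lower-left $l\times l$ block, observe that this block is invertible precisely when a single scalar entry (your $\star$, the paper's $Z_{3,1}$) is nonzero, and compute that scalar to be $-\tfrac{(t_l-1)^2}{8t_l}$ for $l$ odd and $\tfrac{(2t_l+1)^2}{16t_l}$ for $l$ even. The only cosmetic differences are your use of the representative $w=t_{l-1}(\omega)\tilde{w}_{l-1}$ from Proposition \ref{l-1 twist} instead of $w=t_l(w')\tilde{w}_l$, and your phrasing of the block-invertibility criterion via the open Bruhat cell rather than by directly solving $tw=l_l(a)n_1w_ln_2$.
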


\begin{proof}
We begin with a sketch of the proof. 
We first compute the embedding of the torus in coordinates and the embedding of the Weyl element $w$, then compute $Q_l w_l V_l$ in terms of block matrices. We see that the lower left corner is determined by the torus. In particular, this lower left corner must be invertible if $tw\in Q_l w_l V_l$ and the determinant of this block gives us the condition we need. We proceed with the details of the proof.

Recall that the embedding sends $t=\mathrm{diag}(t_1,\dots, t_l, t_l\inv,\dots,t_1\inv)$ to
$$
\mathrm{diag}(s,\left(\begin{matrix}
\frac{1}{2}+\frac{1}{4}(t_l+t_l\inv) & \frac{1}{2}(t_l-t_l\inv) & 2(\frac{1}{2}-\frac{1}{4}(t_l+t_l\inv)) \\
\frac{1}{4}(t_l-t_l\inv) & \frac{1}{2}(t_l+t_l\inv) & \frac{-1}{2}(t_l-t_l\inv) \\
\frac{1}{2}(\frac{1}{2}-\frac{1}{4}(t_l+t_l\inv)) & \frac{-1}{4}(t_l-t_l\inv) & \frac{1}{2}+\frac{1}{4}(t_l+t_l\inv)
\end{matrix}\right), s^*),
$$
where $s=\mathrm{diag}(t_1, t_2, \dots, t_{l-1})$.
By Proposition \ref{l-1 Besselpart}, there exists $w^\prime\in W(\GL_l)$ such that $w=t_l(w^\prime)\tilde{w}_l$ and  
$$
w^\prime=\left(\begin{matrix}
& w'' \\
1 & 
\end{matrix}\right),
$$
for some $w''\in W(\GL_{l-1}).$ Then, 
$$
w=\left(\begin{matrix}
&  & w'' \\
& J_2^{l-1} & \\
(w'')^* & & 
\end{matrix}\right).
$$
Thus, the embedding into $\SO_{2l+1}$ of $w$ is
$$
\left(\begin{matrix}
 & & w'' \\
& A^{l-1} & \\
(w'')^* & & 
\end{matrix}
\right),
$$
where $$
A=\left(\begin{matrix}
\frac{-1}{8} & \frac{3}{4} & \frac{9}{4} \\
\frac{-3}{8} & \frac{5}{4} & \frac{3}{4} \\
\frac{9}{16} & \frac{-3}{8} & \frac{-1}{8}
\end{matrix}
\right).
$$
Note that $A^{l-1}=I_3$ when $l$ is odd. 

Thus, we can compute the embedding of $tw$ in $\SO_{2l+1}$. We obtain
$$
\left(\begin{matrix}
& & \mathrm{diag}(t_1,\dots,t_{l-1})w'' \\
& Z & \\
\mathrm{diag}(t_{l-1}\inv,\dots,t_{1}\inv)(w'')^*
\end{matrix}\right),
$$
where 
$$Z=\left(\begin{matrix}
\frac{1}{2}+\frac{1}{4}(t_l+t_l\inv) & \frac{1}{2}(t_l-t_l\inv) & 2(\frac{1}{2}-\frac{1}{4}(t_l+t_l\inv)) \\
\frac{1}{4}(t_l-t_l\inv) & \frac{1}{2}(t_l+t_l\inv) & \frac{-1}{2}(t_l-t_l\inv) \\
\frac{1}{2}(\frac{1}{2}-\frac{1}{4}(t_l+t_l\inv)) & \frac{-1}{4}(t_l-t_l\inv) & \frac{1}{2}+\frac{1}{4}(t_l+t_l\inv)
\end{matrix}\right) A^{l-1}.$$

Next, we turn towards computing $Q_l w_l V_l.$ Let $a\in \GL_l$ and $n_1,n_2\in V_l$. We wish to compute $l_l(a)n_1 w_l n_2.$ Write 
$$
n_1=\left(\begin{matrix}
I_l & X & Y \\
    & 1 & X^\prime \\
    &   & I_l
\end{matrix}\right),
$$
where $X\in\mathbb{F}_q^l$, $Y\in\mathrm{Mat}_{l\times l}(\mathbb{F}_q),$ $X^\prime=-{}^t X J_l$, and $J_l Y + {}^t X^\prime\cdot X^\prime + {}^t Y J_l =0.$
Similarly, we write$$
n_2=\left(\begin{matrix}
I_l & M & N \\
    & 1 & M^\prime \\
    &   & I_l
\end{matrix}\right).
$$
Then, 
$$l_l(a)n_1 w_l n_2=
\left(\begin{matrix}
aY & aX+aYM & a+aX M^\prime+aYN \\
X^\prime    & 1+X^\prime M & M^\prime+X^\prime N \\
a^*    & a^*M  & a^*N
\end{matrix}\right).
$$

Thus, $tw=l_l(a)n_1 w_l n_2$ has a solution as long as we can find a suitable $a^*.$ We see that $a^*=\left(\begin{matrix}
& Z_{3,1} \\
\mathrm{diag}(t_{l-1}\inv,\dots,t_{1}\inv)(w'')^* &
\end{matrix}\right)$ where $Z_{3,1}$ is the $(3,1)$-entry of $Z$. Such $a^*$ exists as long as this matrix is invertible. Hence we need $Z_{3,1}\neq 0.$

If $l$ is odd, then $Z_{3,1}=\frac{1}{2}(\frac{1}{2}-\frac{1}{4}(t_l+t_l\inv)).$ This is nonzero as long as $t_l\neq 1$ and hence the proposition follows.

If $l$ is even then $Z_{3,1}=\frac{1}{4}+\frac{1}{4}t_l+\frac{1}{16}t_l\inv.$ This is nonzero as long as $t_l\neq \frac{-1}{2}$ and hence the proposition follows. This finishes the proof of the proposition.
 \end{proof}

Next we examine the Bruhat cells for $\mathrm{B}_{l}(\SO_{2l})$ and $\mathrm{B}_{l}^c(\SO_{2l}).$

\begin{prop}\label{l twist supp}
Let $w\in\mathrm{B}_{l}(\SO_{2l})$. Then, for any $t\in T_{\SO_{2l}}$, we have $tw\in Q_l w_l V_l$ under the embedding of $\SO_{2l}$ into $\SO_{2l+1}.$
\end{prop}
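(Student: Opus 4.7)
The plan is to follow the strategy of Proposition \ref{l-1 twist embed}. Since $w\in\mathrm{B}_l(\SO_{2l})$, write $w=t_l(w')\tilde{w}_l$ with $w'\in W(\GL_l)$ satisfying $cwc\neq w$; the proof of Proposition \ref{l-1 Besselpart} shows that this last condition is equivalent to the vanishing $w'_{l,1}=0$. Exactly as in Proposition \ref{l-1 twist embed}, $tw\in Q_lw_lV_l$ is equivalent to invertibility of the lower-left $l\times l$ block of the image of $tw$ under the embedding $\SO_{2l}\hookrightarrow\SO_{2l+1}$.

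The key structural point is that when $w'_{l,1}=0$, the four entries $A_{l,l}$, $B_{l,1}$, $C_{1,l}$, $D_{1,1}$ of $tw=\left(\begin{smallmatrix}A&B\\C&D\end{smallmatrix}\right)$ all vanish. Hence, after inserting the identity row/column for the embedding, the central $3\times 3$ block is the rank-one matrix with a single $1$ in position $(2,2)$, and the subsequent conjugation by $\tilde{M}$ produces the fixed constant $(\tilde{M}\inv)_{3,2}\tilde{M}_{2,1}=\tfrac{1}{4}$ in the \emph{critical} slot corresponding to $Z_{3,1}$ of Proposition \ref{l-1 twist embed}, now independently of $t$. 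So the obstruction of the earlier proposition disappears.

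To finish the case $l$ even, one writes the lower-left $l\times l$ block explicitly in terms of $C$ and $\tfrac{1}{4}$'s, and expands the determinant along its first row. Combining with the adjugate identity
\[
\det C_{\{2,\ldots,l\},\,\{1,\ldots,l\}\setminus\{j\}}=(-1)^{j+1}(C\inv)_{j,1}\det C,
\]
one obtains that the determinant equals $\tfrac{\det C}{16}\bigl(1+4(C\inv)_{l,1}\bigr)$. Since $C=\mathrm{diag}(t_l\inv,\ldots,t_1\inv)(w')^*$ and the entry $(C\inv)_{l,1}$ unpacks to $w'_{l,1}\cdot t_l=0$, the determinant equals the nonzero scalar $\tfrac{\det C}{16}=\pm\tfrac{1}{16}\prod_i t_i\inv$, confirming invertibility for every $t\in T_{\SO_{2l}}$.

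The main obstacle is just the parity split: for $l$ odd, $\tilde{w}_l=c\tilde{w}'_l c$ produces a middle $3\times 3$ block of slightly different shape, but the condition $w'_{l,1}=0$ again kills the analogous four entries and an essentially identical cofactor computation yields a nonzero determinant, which completes the proof.
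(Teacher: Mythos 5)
Your proposal is correct and follows essentially the same route as the paper's proof: it uses Proposition \ref{l-1 Besselpart} to get $w=t_l(w')\tilde{w}_l$ with $w'_{l,1}=0$, reduces membership in $Q_lw_lV_l$ to invertibility of the lower-left $l\times l$ block $a^*$ of the embedded $tw$, and notes that the constant $\tfrac14$ replaces the $t$-dependent entry $Z_{3,1}$ of Proposition \ref{l-1 twist embed}. The only difference is cosmetic: you evaluate $\det a^*$ via cofactor expansion and the adjugate identity, yielding $\tfrac{\det C}{16}\bigl(1+4(C\inv)_{l,1}\bigr)$ with $(C\inv)_{l,1}=t_lw'_{l,1}=0$, whereas the paper reads off the same nonzero value directly from the permutation structure of $(w')^*$.
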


\begin{proof}
Since $w\in\mathrm{B}_{l}(\SO_{2l})$, $w\notin\mathrm{B}_{l-1}(\SO_{2l}).$ By Proposition \ref{l-1 Besselpart}, there exists $w^\prime\in W(\GL_l)$ such that $w=t_l(w^\prime)\tilde{w}_l$ where $w'=(w_{i,j}')_{i,j=1}^l\in W(\GL_l)$ with $w_{l,1}'=0.$ Similarly, we also write $(w')^*=(w_{i,j}'^*)_{i,j=1}^l.$ 

As in the previous proof, we compute $Q_l w_l V_l.$ Let $a\in \GL_l$ and $n_1,n_2\in V_l$. We wish to compute $l_l(a)n_1 w_l n_2.$ Write 
$$
n_1=\left(\begin{matrix}
I_l & X & Y \\
    & 1 & X^\prime \\
    &   & I_l
\end{matrix}\right),
$$
where $X\in\mathbb{F}_q^l$, $Y\in\mathrm{Mat}_{l\times l}(\mathbb{F}_q),$ $X^\prime=-{}^t X J_l$, and $J_l Y + {}^t X^\prime\cdot X^\prime + {}^t Y J_l =0.$
Similarly, we write$$
n_2=\left(\begin{matrix}
I_l & M & N \\
    & 1 & M^\prime \\
    &   & I_l
\end{matrix}\right).
$$
Then, 
$$l_l(a)n_1 w_l n_2=
\left(\begin{matrix}
aY & aX+aYM & a+aX M^\prime+aYN \\
X^\prime    & 1+X^\prime M & M^\prime+X^\prime N \\
a^*    & a^*M  & a^*N
\end{matrix}\right).
$$

We proceed by computing the image of $w$ under the embedding given by Equation \ref{equ emb n=l}. Recall that we defined the matrix
$$
\tilde{M}=\left(\begin{matrix}
    \frac{1}{4} & \frac{1}{2} & \frac{-1}{2} \\
    \frac{1}{2} & 0 & 1 \\
    \frac{-1}{2} & 1 & 1
    \end{matrix}\right)
$$ which was used in the embedding. The image of the embedding depends on the parity of $l$. 

Let $l$ be even. Then the embedding of $\SO_{2l}$ into $\SO_{2l+1}$ takes $w$ to

$$
\left(\begin{matrix}
0_{(l-1)\times(l-1)} & \left(\begin{matrix}
0 & 0 & (w_{i,1}')_{i=1}^{l-1}
\end{matrix}\right) \tilde{M} & (w_{i,j}')_{i=1,j=2}^{i=l-1,j=l} \\
\tilde{M}\inv \left(\begin{matrix}
0 \\
0 \\
(w_{1,j}'^*)_{j=1}^{l-1}  
\end{matrix}\right)
& \tilde{M}\inv \left(\begin{matrix}
0 & 0 & 0 \\
0 & 1 & 0 \\
0 & 0 & 0
\end{matrix}\right)\tilde{M} & \tilde{M}\inv \left(\begin{matrix}
(w_{l,j}')_{j=2}^l   \\
0 \\
0
\end{matrix}\right)  \\
(w_{i,j}'^*)_{i=2,j=1}^{i=l,j=l-1}  & \left(\begin{matrix}
(w_{i,l}'^*)_{i=2}^l & 0 & 0
\end{matrix}\right) \tilde{M} & 0_{(l-1)\times(l-1)}
\end{matrix}\right).
$$
Thus, $tw=l_l(a)n_1 w_l n_2$ as long as we can find $a$ such that
$$
a^*=\left(\begin{matrix}
\frac{t_l\inv(w_{1,j}'^*)_{j=1}^{l-1}}{4} & \frac{1}{4} \\
\mathrm{diag}(t_{l-1}\inv,\dots, t_1\inv) (w_{i,j}'^*)_{i=2,j=1}^{i=l,j=l-1} & \mathrm{diag}(t_{l-1}\inv,\dots, t_1\inv)\frac{(w_{i,l}'^*)_{i=2}^l}{4} \\
\end{matrix}\right).
$$
This is possible as the determinant of this matrix is nonzero. Indeed, the determinant of this matrix is (up to a sign) $\frac{1}{16} t_l\inv t_{l-1}\inv \cdots t_1\inv$ since $(w')^*\in W(\GL_l)$ and $(w_{i,j}'^*)_{i=2,j=1}^{i=l,j=l-1}$ contains a row of zeroes. Therefore, $tw\in Q_l w_l V_l$ for any $t\in T_{\SO_{2l}}.$

The proof for $l$ odd is similar. We obtain that $tw=l_l(a)n_1 w_l n_2$ as long as we can find $a$ such that
$$
a^*=\left(\begin{matrix}
\frac{t_l\inv(w_{1,j}'^*)_{j=1}^{l-1}}{4} & \frac{1}{4} \\
\mathrm{diag}(t_{l-1}\inv,\dots, t_1\inv) (w_{i,j}'^*)_{i=2,j=1}^{i=l,j=l-1} & \mathrm{diag}(t_{l-1}\inv,\dots, t_1\inv)\frac{-(w_{i,l}'^*)_{i=2}^l}{2} \\
\end{matrix}\right).
$$
Again, this is always possible as the determinant of this matrix is (up to a sign) $\frac{1}{8} t_l\inv\cdots t_1\inv.$
Therefore, $tw\in Q_l w_l V_l$ for any $t\in T_{\SO_{2l}}$ which concludes the proof of the proposition.
 \end{proof}

\begin{prop}\label{l twist conj supp}
Let $w\in\mathrm{B}_{l}^c(\SO_{2l})$. Then for any $t\in T_{\SO_{2l}}$ $tw\in Q_l w_l V_l$ under the embedding of $\SO_{2l}$ into $\SO_{2l+1}.$
\end{prop}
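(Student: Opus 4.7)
The plan is to mirror the proof of Proposition \ref{l twist supp}, handling the extra conjugation by $c$ explicitly. Since $w\in\mathrm{B}_l^c(\SO_{2l})$ we have $w=cw_0c$ for some $w_0\in\mathrm{B}_l(\SO_{2l})$, so $w_0=t_l(w^\prime)\tilde w_l$ with $w^\prime\in W(\GL_l)$. The condition $cw_0c\neq w_0$ defining $\mathrm{B}_l(\SO_{2l})$ translates, via Proposition \ref{l-1 Besselpart}, into $w^\prime_{l,1}=0$. Since $w^\prime$ is a permutation matrix and $(w^\prime)^*=J_l w^\prime J_l$, we also have $(w^\prime)^*_{1,l}=w^\prime_{l,1}=0$. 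Hence the unique $1$ in the first row of $(w^\prime)^*$ sits in some column $j_0\leq l-1$, and the unique $1$ in its last column sits in some row $i_0\geq 2$.

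As in Proposition \ref{l twist supp}, for $a\in\GL_l$ and $n_1,n_2\in V_l$ the product $l_l(a)n_1 w_l n_2$ has the explicit block form displayed there, with lower-left $l\times l$ block equal to $a^*$. Thus, to prove that $tw\in Q_l w_l V_l$ it suffices to show that the lower-left $l\times l$ block of the embedding of $tw$ into $\SO_{2l+1}$ is invertible; $a$, $n_1$ and $n_2$ are then recovered from the other blocks. Recall that the embedding has the form $g\mapsto M^{-1}\tilde g M$ with $M=\mathrm{diag}(I_{l-1},\tilde M,I_{l-1})$, and conjugation by $M$ only alters rows and columns $l,l+1,l+2$ of $\tilde g$. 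Since conjugation by $c$ in $\SO_{2l}$ only swaps rows $l,l+1$ and columns $l,l+1$, carrying out the same block computation as in Proposition \ref{l twist supp} but tracking this swap gives, for $l$ even, a lower-left block of the form
$$b^*=\begin{pmatrix} -\dfrac{t_l}{2}\bigl((w^\prime)^*_{1,j}\bigr)_{j=1}^{l-1} & \dfrac{1}{4} \\ \mathrm{diag}(t_{l-1}^{-1},\ldots,t_1^{-1})\bigl((w^\prime)^*_{i,j}\bigr)_{i=2,j=1}^{i=l,j=l-1} & -\dfrac{1}{2}\,\mathrm{diag}(t_{l-1}^{-1},\ldots,t_1^{-1})\bigl((w^\prime)^*_{i,l}\bigr)_{i=2}^{l} \end{pmatrix},$$
and an entirely analogous expression (with the scalars coming from $\tilde M$ and $\tilde M^{-1}$ changing in a predictable way) for $l$ odd.

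Finally I would show $\det b^*\neq 0$ by Laplace expansion along the first row. The minor obtained by deleting row $1$ and column $l$ is singular: since $(w^\prime)^*_{i_0,l}=1$ is the unique nonzero entry of the deleted $l$-th column, the row of this minor corresponding to $i_0$ is zero. Hence only the entry at position $(1,j_0)$ contributes. The remaining $(l-1)\times(l-1)$ minor factors as $\mathrm{diag}(t_{l-1}^{-1},\ldots,t_1^{-1})$ times $-\tfrac{1}{2}$ times a genuine $(l-1)\times(l-1)$ permutation matrix (the restriction of $(w^\prime)^*$ to rows $\geq 2$ and columns $\neq j_0$), so its determinant is $\pm\tfrac{1}{2}\,t_1^{-1}\cdots t_{l-1}^{-1}$. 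Combining, $\det b^*=\pm\tfrac{t_l}{4}\,t_1^{-1}\cdots t_{l-1}^{-1}\neq 0$ for every $t\in T_{\SO_{2l}}$, which is the desired conclusion; the case $l$ odd is treated in exactly the same way. The main obstacle is the bookkeeping in computing $b^*$: because conjugation by $c$ in $\SO_{2l}$ and by $\tilde M$ in the embedding interact nontrivially on the three middle rows/columns, one has to check carefully that the entries of $\tilde M$ and $\tilde M^{-1}$ appearing in the top row and right column do not conspire to vanish. Note also that the phenomenon $(w^\prime)^*_{i_0,l}=1$ forces the lower-left $(l-1)\times(l-1)$ minor of $b^*$ to be singular, so the top row and the $(1,l)$ entry of $b^*$ are genuinely needed for invertibility, and it is exactly the hypothesis $w\in\mathrm{B}_l^c$ (rather than $\mathrm{B}_{l-1}$) that makes this combination nonzero.
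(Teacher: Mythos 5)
Your proposal is correct and follows essentially the same route as the paper: the paper's own proof simply says the argument is the same as for Proposition \ref{l twist supp} and records the resulting lower-left blocks $a^*$, which agree with your $b^*$ (for $l$ even) and your indicated analogue for $l$ odd. Your Laplace expansion along the first row, using $(w')^*_{1,l}=w'_{l,1}=0$ to isolate the $(1,j_0)$ contribution and conclude $\det b^*=\pm\tfrac{t_l}{4}t_1^{-1}\cdots t_{l-1}^{-1}\neq 0$, is exactly the determinant computation the paper carries out (in the $\mathrm{B}_l$ case) and implicitly reuses here.
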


\begin{proof}
The proof is similar to that of the previous proposition. We record the $a^*$'s for future reference. 

For $l$ even, we have $tw=l_l(a)n_1 w_l n_2$ where $a\in\GL_l$ and $n_1,n_2\in V_l$ if 
$$
a^*=\left(\begin{matrix}
\frac{-t_l(w_{1,j}'^*)_{j=1}^{l-1}}{2} & \frac{1}{4} \\
\mathrm{diag}(t_{l-1}\inv,\dots, t_1\inv) (w_{i,j}'^*)_{i=2,j=1}^{i=l,j=l-1} & \mathrm{diag}(t_{l-1}\inv,\dots, t_1\inv)\frac{-(w_{i,l}'^*)_{i=2}^l}{2} \\
\end{matrix}\right).
$$

For $l$ odd, we have $tw=l_l(a)n_1 w_l n_2$ where $a\in\GL_l$ and $n_1,n_2\in V_l$ if 
$$
a^*=\left(\begin{matrix}
\frac{-t_l(w_{1,j}'^*)_{j=1}^{l-1}}{2} & \frac{1}{4} \\
\mathrm{diag}(t_{l-1}\inv,\dots, t_1\inv) (w_{i,j}'^*)_{i=2,j=1}^{i=l,j=l-1} & \mathrm{diag}(t_{l-1}\inv,\dots, t_1\inv)\frac{(w_{i,l}'^*)_{i=2}^l}{4} \\
\end{matrix}\right).
$$
 \end{proof}

The next two lemmas relate Bessel function for $\pi$ and $c\cdot\pi.$ In particular, the below lemma shows that the contribution of the zeta integral from $\mathrm{B}_l^c(\SO_{2l})$ for $\pi$ can be described using an integral over $\mathrm{B}_l(\SO_{2l})$ and the Bessel function for $c\cdot\pi.$

\begin{lemma}\label{l contribution to GL_l}
Let $\pi$ be an irreducible cuspidal $\psi$-generic representations of $\SO_{2l}.$ Then we have
\begin{align*}
    &\sum_{t\in T_{\SO_{2l}}, w\in \mathrm{B}_l^c(\SO_{2l}), u\in U_{\SO_{2l}}}
\B_{\pi,\psi}(twu)\tilde{f}_v(w_{l,l} twu, I_l) \\
=&\sum_{t\in T_{\SO_{2l}}, w\in \mathrm{B}_l(\SO_{2l}), u\in U_{\SO_{2l}}}
\B_{c\cdot\pi,\psi}(twu)
\tilde{f}_v(w_{l,l} twu, I_l).
\end{align*}
\end{lemma}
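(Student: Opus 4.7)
The plan is to reduce the LHS to the RHS by a sequence of changes of variables driven by the outer automorphism $c$. First, since conjugation by $c$ is an involution of $\SO_{2l}$ that restricts to bijections on $T_{\SO_{2l}}$ and on $U_{\SO_{2l}}$ (as $c$ normalizes the Borel), and $c\mathrm{B}_l^c(\SO_{2l})c=\mathrm{B}_l(\SO_{2l})$ by definition, the substitution $(t,w,u)\mapsto(ctc,cwc,cuc)$ is a bijection that transforms $twu$ into $c(twu)c$. After this substitution, the LHS becomes a sum over $T_{\SO_{2l}}\times\mathrm{B}_l(\SO_{2l})\times U_{\SO_{2l}}$ with integrand $\B_{\pi,\psi}(c(twu)c)\,\tilde{f}_v(w_{l,l}c(twu)c,I_l)$.

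Next I apply the identity $\B_{\pi,\psi}(chc)=\B_{c\cdot\pi,\psi}(\tilde{t}h\tilde{t}^{-1})$ from Section~\ref{bessel functions}, which follows directly from the definition $\B_{c\cdot\pi,\psi}(g)=\B_{\pi,\psi}(c\tilde{t}^{-1}g\tilde{t}c)$ and $\tilde{t}c=c\tilde{t}^{-1}$. Expanding $\tilde{t}(twu)\tilde{t}^{-1}=(ts_w)\,w\,(\tilde{t}u\tilde{t}^{-1})$ with $s_w=\tilde{t}w\tilde{t}^{-1}w^{-1}\in T_{\SO_{2l}}$, and noting that $\psi(\tilde{t}u\tilde{t}^{-1})=\psi_c(u)$ by a direct computation on simple-root coordinates, I make the further substitutions $t\mapsto ts_w^{-1}$ and $u\mapsto\tilde{t}^{-1}u\tilde{t}$ (each a bijection on the respective summation set). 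These turn the Bessel factor into $\B_{c\cdot\pi,\psi}(twu)$, exactly as on the RHS.

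The residual task is to show that, after this composite substitution, the factor $\tilde{f}_v$ also agrees, i.e.\
\[
\tilde{f}_v\!\left(w_{l,l}\cdot c\bigl(ts_w^{-1}w\tilde{t}^{-1}u\tilde{t}\bigr)c,\,I_l\right) \;=\; \tilde{f}_v(w_{l,l}\,twu,\,I_l).
\]
To handle this I use the identity $w_{l,l}\,l_l(a)=l_l(a/2)$ together with Lemma~\ref{intertwine}(3): they combine to give $\tilde{f}_v(w_{l,l}g,I_l)=W_v^*(a(g)/2)$ whenever $\iota(g)\in Q_l w_l V_l$, where $a(g)\in\GL_l$ is the Siegel-Levi component of an Iwasawa decomposition $\iota(g)=l_l(a(g))\,n_1\,w_l\,n_2$. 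Explicit formulas for $a(g)^*$ are furnished by Proposition~\ref{l twist supp} when the Weyl part lies in $\mathrm{B}_l(\SO_{2l})$ and by Proposition~\ref{l twist conj supp} when it lies in $\mathrm{B}_l^c(\SO_{2l})$. I would then verify that after tracking the $c$-conjugation (which, via $c\tilde{t}c=\tilde{t}^{-1}$, inverts the $l$-th torus coordinate) and the $s_w$- and $\tilde{t}$-shifts, the two $a^*$-values coincide as elements of $\GL_l$, at worst up to a left factor in $U_{\GL_l}$ whose character contribution is absorbed by the Whittaker equivariance of $W_v^*$.

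The main obstacle will be this final matching step. The explicit formulas in Propositions~\ref{l twist supp} and~\ref{l twist conj supp} differ in their top row-block (coefficients $t_l^{-1}/4$ versus $-t_l/2$) and in their last column-block (coefficients $1/4$ versus $-1/2$), and one has to verify that these discrepancies are precisely compensated by the combined torus substitution and the unipotent conjugation. This is a direct but delicate matrix computation using the explicit embedding $\iota$, the $3\times 3$ matrix $\tilde M$ appearing therein, and the definition $\tilde{t}=\mathrm{diag}(I_{l-1},-1/2,-2,I_{l-1})$; the even and odd parities of $l$ will need to be treated separately, as already reflected in the two cases of Propositions~\ref{l twist supp} and~\ref{l twist conj supp}.
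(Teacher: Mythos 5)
Your overall strategy is the same as the paper's: conjugate the $\mathrm{B}_l^c(\SO_{2l})$ sum by $c$, convert $\B_{\pi,\psi}$ into $\B_{c\cdot\pi,\psi}$ via the identity $\B_{c\cdot\pi,\psi}(g)=\B_{\pi,\psi}(c\tilde{t}\inv g\tilde{t}c)$, reparametrize the torus to absorb the resulting shift, and then check that the $\tilde{f}_v$ factor is invariant under the composite substitution using the explicit Iwasawa data of Propositions \ref{l twist supp} and \ref{l twist conj supp}. (The paper packages your first two steps into the single identity $\B_{\pi,\psi}(tw)=\B_{c\cdot\pi,\psi}(t'\,cwc)$ with $t'=\tilde{t}ctc\cdot(cwc)\tilde{t}\inv(cwc)\inv$, but that is only a difference of bookkeeping.) All of your reductions up to the ``residual task'' are correct.

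The gap is that the residual task is the entire content of the lemma, and you do not carry it out. The paper's proof consists almost wholly of this verification: it computes $(cwc)\tilde{t}\inv(cwc)\inv$ in coordinates (finding it is $t_l(\mathrm{diag}(1,\dots,-2,\dots,1))$ with the $-2$ in the $r$-th slot, where $r$ is determined by $w'_{r,1}=1$ in the decomposition $cwc=t_l(w')\tilde{w}_l$ of Proposition \ref{l-1 Besselpart}), identifies the precise change of variables in the coordinates $t_r$ and $t_l$ that returns $t'$ to $t$, and then checks against the two $a^*$-formulas that this same change of variables carries $a_1^*$ exactly onto $a_2^*$, so that $\tilde{f}_v(w_{l,l}tw u,I_l)=W_v^*(\tfrac{1}{2}a_i\tilde{u}^*)$ is literally unchanged. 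Two points in your sketch need attention. First, you never identify $s_w=\tilde{t}w\tilde{t}\inv w\inv$ concretely; without knowing that it only rescales the $r$-th and $l$-th torus coordinates (by constants depending on the parity of $l$), you cannot match it against the discrepancy between the two $a^*$'s, which also lives only in those coordinates. Second, your fallback that the two $a^*$-values need only agree ``up to a left factor in $U_{\GL_l}$ absorbed by Whittaker equivariance'' is not a safe escape hatch: $W_v^*$ transforms by $\psi\inv$ under such a factor, so a nontrivial unipotent discrepancy with nontrivial character value would introduce an extraneous $\psi$-value and the identity would fail. In the paper's computation the matrices match on the nose (they are monomial, and the mismatch is purely in two diagonal entries), so no such factor arises — but that is something one must verify, not assume away.
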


\begin{proof}
We write Let $t=\mathrm{diag}(t_1,\dots,t_{l-1},t_l, t_l\inv,t_{l-1}\inv,\dots,t_1\inv).$
By Proposition \ref{Besselprop}, we have $\B_{\pi,\psi}(twu)=\B_{\pi,\psi}(tw)\psi(u)$ for any $t\in T_{\SO_{2l}},$ $u\in U_{\SO_{2l}},$ and $w\in W(\SO_{2l}).$ Also,
\begin{align*}
 \B_{\pi,\psi}(tw)
=\B_{\pi,\psi}(c\tilde{t}\inv \tilde{t} ctw c\tilde{t}\inv \tilde{t} c)
=\B_{c\cdot\pi,\psi}( \tilde{t} ctw c\tilde{t}\inv)
=\B_{c\cdot\pi,\psi}( t' cwc),
\end{align*}
where $t'=\tilde{t}ctc (cwc\tilde{t}\inv (cwc)\inv).$ For $w\in\mathrm{B}_l^c(\SO_{2l})$, we have $cwc\in \mathrm{B}_l(\SO_{2l}).$ By Proposition \ref{l-1 Besselpart}, there exists $w^\prime\in W(\GL_l)$ such that $cwc=t_l(w^\prime)\tilde{w}_l$ where $w'=(w_{i,j}')_{i,j=1}^l\in W(\GL_l)$ with $w_{l,1}'=0.$ We also let $(w')^*=(w_{i,j}'^*)_{i,j=1}^l$ and $r$ be such that $w_{r,1}'=1.$ 

For $l$ even, $(cwc\tilde{t}\inv (cwc)\inv)=t_l(\mathrm{diag}(1,\dots, 1, -2, 1,\dots, 1))$ where $-2$ is the $r$-th entry. Thus, $t'=t_l(\mathrm{diag}(t_1,\dots, t_{r-1}, -2t_r, t_{r+1},\dots, t_{l-1}, \frac{-t_l\inv}{2})).$ Let $a_1,a_2\in\GL_l$ and $n_1,n_2,n_3,n_4\in V_l$ be such that 
$tw=l_l(a_1)n_1 w_l n_2$ and $tcwc=l_l(a_2)n_3 w_l n_4.$ By the proofs of Proposition \ref{l twist supp} and \ref{l twist conj supp}, we have
$$
a_1^*=\left(\begin{matrix}
\frac{-t_l(w_{1,j}'^*)_{j=1}^{l-1}}{2} & \frac{1}{4} \\
\mathrm{diag}(t_{l-1}\inv,\dots, t_1\inv) (w_{i,j}'^*)_{i=2,j=1}^{i=l,j=l-1} & \mathrm{diag}(t_{l-1}\inv,\dots, t_1\inv)\frac{-(w_{i,l}'^*)_{i=2}^l}{2} \\
\end{matrix}\right),$$ and
$$
a_2^*=\left(\begin{matrix}
\frac{t_l\inv(w_{1,j}'^*)_{j=1}^{l-1}}{4} & \frac{1}{4} \\
\mathrm{diag}(t_{l-1}\inv,\dots, t_1\inv) (w_{i,j}'^*)_{i=2,j=1}^{i=l,j=l-1} & \mathrm{diag}(t_{l-1}\inv,\dots, t_1\inv)\frac{(w_{i,l}'^*)_{i=2}^l}{4} \\
\end{matrix}\right).
$$
Performing the change of variables $t_r\mapsto \frac{-t_r}{2}$ and $t_l\mapsto \frac{-t_l\inv}{2}$ takes $t'\mapsto t$ and $a_1^*\mapsto a_2^*.$ Note that $w'^*_{l+1-r,l}=1$ and this is the coordinate for $t_r\inv.$ Thus we obtain the claim for $l$ even:
\begin{align*}
    &\quad\sum_{t\in T_{\SO_{2l}}, w\in \mathrm{B}_l^c(\SO_{2l}), u\in U_{\SO_{2l}}}
\B_{\pi,\psi}(twu)\tilde{f}_v(w_{l,l} twu, I_l)\\
&=\sum_{t\in T_{\SO_{2l}}, w\in \mathrm{B}_l^c(\SO_{2l}), u\in U_{\SO_{2l}}}
\B_{c\cdot\pi,\psi}( t' cwcu)
\tilde{f}_v(w_{l,l} twu, I_l)\\
&=\sum_{t\in T_{\SO_{2l}}, w\in \mathrm{B}_l(\SO_{2l}), u\in U_{\SO_{2l}}}
\B_{c\cdot\pi,\psi}( t wu)
\tilde{f}_v(w_{l,l} twu, I_l).
\end{align*}

For $l$ is odd, we obtain a different change of variables which matches the $a^*$'s in the odd cases of Propositions \ref{l twist supp} and \ref{l twist conj supp}. We omit the details. Again, we find that
\begin{align*}
&\quad\sum_{t\in T_{\SO_{2l}}, w\in \mathrm{B}_l^c(\SO_{2l}), u\in U_{\SO_{2l}}}
\B_{\pi,\psi}(twu)\tilde{f}_v(w_{l,l} twu, I_l)\\
&=\sum_{t\in T_{\SO_{2l}}, w\in \mathrm{B}_l(\SO_{2l}), u\in U_{\SO_{2l}}}
\B_{c\cdot\pi,\psi}( t wu)
\tilde{f}_v(w_{l,l} twu, I_l).
\end{align*} 
\end{proof}

The contribution of $\mathrm{B}_{l-1}(\SO_{2l})$ to the zeta integral is more nuanced. Since $cwc=w$ for $w\in\mathrm{B}_{l-1}(\SO_{2l}),$ one has to split the integral in half and then conjugate one half in order to obtain an integral involving a sum of Bessel functions for $\pi$ and $c\cdot\pi.$ Note that from Theorem \ref{GL_{l-1}} it follows that we need to only consider the subset of the torus defined by $$T_l=\{t=\mathrm{diag}(t_1,\dots,t_l,t_l\inv, \dots, t_1\inv)\in T_{\SO_{2l}} \, | \, t_l\neq \pm c_l\},$$ where $c_l= 1$ if $l$ is odd or $\frac{1}{2}$ if $l$ is even.
\begin{lemma}\label{l-1 contribution to GL_l}
Let $\pi$ be an irreducible cuspidal $\psi$-generic representation of $\SO_{2l}$.
There exists a subset $A_l\subseteq T_l,$ specified in the proof, such that
\begin{align*}
&\sum_{t\in T_l, w\in \mathrm{B}_{l-1}(\SO_{2l}), u\in U_{\SO_{2l}}}
\B_{\pi,\psi}(twu)\tilde{f}_v(w_{l,l} twu, I_l) \\
=&\sum_{t\in A_l, w\in \mathrm{B}_{l-1}(\SO_{2l}), u\in U_{\SO_{2l}}}
(\B_{\pi,\psi}+\B_{c\cdot\pi,\psi})(twu)\tilde{f}_v(w_{l,l} twu, I_l).
\end{align*}
\end{lemma}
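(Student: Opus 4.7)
The plan is to exploit the outer-automorphism symmetry $\B_{\pi,\psi}(g) = \B_{c\cdot\pi,\psi}(\gamma(g))$, where $\gamma(g) = \tilde{t}cgc\tilde{t}\inv$, to pair contributions from $\pi$ with those from $c\cdot\pi$. Since $c\tilde{t}c = \tilde{t}\inv$, one checks $(\tilde{t}c)^2 = I_{2l}$, so $\gamma$ is an involution. Crucially, for $w \in \mathrm{B}_{l-1}(\SO_{2l})$ we have $cwc = w$, so a short calculation gives
\[
\gamma(twu) = \phi(t) \cdot w \cdot \gamma_U(u),
\]
where $\gamma_U(u) = \tilde{t}cuc\tilde{t}\inv$ is an involution of $U_{\SO_{2l}}$ and $\phi(t) = t^c \cdot A$ with $A = \tilde{t}w\tilde{t}\inv w\inv$. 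Using Proposition~\ref{Subsets} (which tells us that $wtw\inv$ has $t_l^\epsilon$ in the $l$-th slot with $\epsilon = (-1)^{l+1}$ for all $w \in \mathrm{B}_{l-1}(\SO_{2l})$), one computes that $A$ is independent of $w$: $A = I_{2l}$ when $l$ is odd and $A = \mathrm{diag}(I_{l-1}, \frac{1}{4}, 4, I_{l-1})$ when $l$ is even. In either case $A^c = A\inv$, so $\phi$ is an involution of $T_{\SO_{2l}}$, and a direct check shows its fixed locus is precisely $\{t : t_l = \pm c_l\}$, i.e., exactly what is excluded from $T_l$. Set $A_l$ to be any set of orbit representatives for the fixed-point-free involution $\phi|_{T_l}$.

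I would then split $\sum_{T_l} = \sum_{A_l} + \sum_{T_l \smallsetminus A_l}$ and, in the second sum, substitute $t = \phi(t')$ for $t' \in A_l$ together with the change of variables $u \mapsto \gamma_U(u)$ on $U_{\SO_{2l}}$. The Bessel identity yields $\B_{\pi,\psi}(\phi(t')wu) = \B_{c\cdot\pi,\psi}(t' w \gamma_U(u))$, and the equality $\psi \circ \gamma_U = \psi$ (a consequence of $\psi_c(\tilde{t}\inv \,\cdot\, \tilde{t}) = \psi$ from Section~\ref{bessel functions}, combined with $\psi_c(cvc) = \psi(v)$) ensures no extra $\psi$-factor appears. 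The problem then reduces to the pointwise identity
\[
\tilde{f}_v(w_{l,l}\, \gamma(twu),\, I_l) = \tilde{f}_v(w_{l,l}\, twu,\, I_l)
\]
for $t \in T_l$, $w \in \mathrm{B}_{l-1}(\SO_{2l})$, and $u \in U_{\SO_{2l}}$.

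The main obstacle is establishing this $\tilde{f}_v$-identity, and I would attack it by comparing Siegel-Bruhat data via Proposition~\ref{l-1 twist embed}: write $\iota(tw) = l_l(a) n_1 w_l n_2$ as in that proposition. The $a^*$ computed there has a lower-left block $\mathrm{diag}(t_{l-1}\inv,\ldots, t_1\inv)(w'')^*$, which is independent of $t_l$ and hence fixed by $\phi$, together with a single nontrivial scalar $Z_{3,1}(t_l)$ in the upper-right, for which a direct algebraic check gives $Z_{3,1}(t_l\inv) = Z_{3,1}(t_l)$ when $l$ is odd and $Z_{3,1}(t_l\inv/4) = Z_{3,1}(t_l)$ when $l$ is even. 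Hence $a(\phi(t)) = a(t)$. Combined with the relations $w_{l,l} l_l(a) = l_l(a/2)$ and $\tilde{f}_v(vg, I_l) = \tilde{f}_v(g, I_l)$ for $v \in V_l$ (since $\xi(vg) = \xi(g)$ in $I(\tau^*)$), and with Lemma~\ref{intertwine}(3) giving $\tilde{f}_v(l_l(a/2) w_l x, I_l) = W_v^*(a/2)$ for any $x \in V_l$, both sides of the target identity evaluate to $W_v^*(a/2)$, regardless of the unipotent pieces contributed by $n_1$, $n_2$, and $\iota(u)$. With the $\tilde{f}_v$-invariance in hand, the split sum rearranges to $\sum_{A_l, w, u} (\B_{\pi,\psi} + \B_{c\cdot\pi,\psi})(twu)\tilde{f}_v(w_{l,l}twu, I_l)$, giving the lemma.
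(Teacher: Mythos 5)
Your proposal is correct and follows essentially the same route as the paper's proof: the involution induced by conjugation by $\tilde{t}c$ (namely $t_l\mapsto t_l\inv$ for $l$ odd and $t_l\mapsto \frac{t_l\inv}{4}$ for $l$ even), the observation that its fixed points are exactly the $t_l=\pm c_l$ excluded from $T_l$ so that $T_l$ splits into orbit representatives $A_l$ and their images, and the invariance of the Siegel--Levi component $a$ (equivalently of $Z_{3,1}$ and of the $t_l$-independent block) under that substitution are precisely the paper's argument. The only imprecision is your claim that both sides equal $W_v^*(a/2)$ ``regardless of'' $\iota(u)$: since $\iota(u)=l_l(\tilde{u})n_3$ with $\tilde{u}\in U_{\GL_l}$ nontrivial, the value is $W_v^*(\tfrac{1}{2}a\tilde{u}^*)$ and does depend on $u$, so after your change of variables $u\mapsto\gamma_U(u)$ you still need the (easily verified) identity $\widetilde{\gamma_U(u)}=\tilde{u}$ — or, as the paper does, you can skip the change of variables on $u$ entirely and use only the $\psi$-equivariance of the Bessel functions.
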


\begin{proof}
Let $t=\mathrm{diag}(t_1,\dots,t_{l-1},t_l, t_l\inv,t_{l-1}\inv,\dots,t_1\inv).$ We have
\begin{align*}
    \B_{\pi,\psi}(tw)=\B_{\pi,\psi}(c\tilde{t}\inv \tilde{t} ctw c\tilde{t}\inv \tilde{t} c)=\B_{c\cdot\pi,\psi}( \tilde{t} ctw c\tilde{t}\inv)
 =\B_{c\cdot\pi,\psi}( t' cwc),
\end{align*}
where $t'=\tilde{t}ctc (cwc\tilde{t}\inv (cwc)\inv).$ For $w\in \mathrm{B}_{l-1}(\SO_{2l}),$ $cwc=w.$ Hence $t'=\tilde{t}ctcw\tilde{t}\inv w\inv$ and 
$
\B_{c\cdot\pi,\psi}( t' cwc)=\B_{c\cdot\pi,\psi}( t' w).$

If $l$ is even, then $w\tilde{t}\inv w\inv=\tilde{t}$ and so $t'=\mathrm{diag}(t_1,\dots,t_{l-1},\frac{t_l\inv}{4}, 4t_l,t_{l-1}\inv,\dots,t_1\inv).$ The map $t_l\mapsto \frac{t_l\inv}{4}$ sends $t^\prime$ to $t$ and $\frac{1}{4}+\frac{1}{4}t_l+\frac{1}{16}t_l\inv$ to itself. The fixed points of the map $t_l\mapsto \frac{t_l\inv}{4}$ are $t_l=\pm\frac{1}{2}.$ Furthermore, the mapping is an involution. Therefore, we can partition $\mathbb{F}_q^\times\setminus\{\pm\frac{1}{2}\}$ into two disjoint sets $A$ and $B$ such that if $t_l\in A$ then $\frac{t_l\inv}{4}\in B.$ Hence, we partition $T_l$ into two sets $A_l$ and $B_l$ such that $t\in A_l$ if and only if $t_l\in A.$ Note that in this case $tw=l_l(a)n_1 w_l n_2$ for $a\in \GL_l$ and $n_1,n_2\in V_l$ with
$$
a^*=\left(\begin{matrix}
& \frac{1}{4}+\frac{1}{4}t_l+\frac{1}{16}t_l\inv \\
\mathrm{diag}(t_{l-1}\inv,\dots,t_{1}\inv)(w'')^* &
\end{matrix}\right).
$$
The map $t_l\mapsto \frac{t_l\inv}{4}$ takes $a^*\mapsto a^*.$ Therefore,
\begin{align*}
&\,\,\,\,\,\,\,\sum_{t\in T_l, w\in \mathrm{B}_{l-1}(\SO_{2l}), u\in U_{\SO_{2l}}}
\B_{\pi,\psi}(twu)\tilde{f}_v(w_{l,l} twu, I_l)\\
&=\sum_{t\in A_l, w\in \mathrm{B}_{l-1}(\SO_{2l}), u\in U_{\SO_{2l}}}
\B_{\pi,\psi}(twu)\tilde{f}_v(w_{l,l} twu, I_l)\\
&\quad+\,\sum_{t\in B_l, w\in \mathrm{B}_{l-1}(\SO_{2l}), u\in U_{\SO_{2l}}}
\B_{\pi,\psi}(twu)\tilde{f}_v(w_{l,l} twu, I_l)\\
&=\sum_{t\in A_l, w\in \mathrm{B}_{l-1}(\SO_{2l}), u\in U_{\SO_{2l}}}
(\B_{\pi,\psi}+\B_{c\cdot\pi,\psi})(twu)\tilde{f}_v(w_{l,l} twu, I_l).
\end{align*}

The case for $l$ odd is similar. We have $w\tilde{t}\inv w\inv=\tilde{t}\inv$ and hence $t'=ctc.$ The map $t_l\mapsto t_l\inv$ sends $t^\prime$ to $t$ and $\frac{1}{2}(\frac{1}{2}-\frac{1}{4}(t_l+t_l\inv))$ to itself. The fixed points of the map $t_l\mapsto t_l\inv$ are $t_l=\pm 1.$ Furthermore, the mapping is an involution. Therefore, we can partition $\mathbb{F}_q^\times\setminus\{\pm 1\}$ into two disjoint sets $A$ and $B$ such that if $t_l\in A$ then $t_l\inv\in B.$ Furthermore we partition $T_l$ into two sets $A_l$ and $B_l$ such that $t\in A_l$ if and only if $t_l\in A.$ Note that in this case $tw=l_l(a)n_1 w_l n_2$ for $a\in \GL_l$ and $n_1,n_2\in V_l$ with
$$
a^*=\left(\begin{matrix}
& \frac{1}{2}(\frac{1}{2}-\frac{1}{4}(t_l+t_l\inv)) \\
\mathrm{diag}(t_{l-1}\inv,\dots,t_{1}\inv)(w'')^* &
\end{matrix}\right).
$$
The map $t_l\mapsto t_l\inv$ takes $a^*\mapsto a^*.$ As in the previous case, we find
\begin{align*}
&\quad\sum_{t\in T_l, w\in \mathrm{B}_{l-1}(\SO_{2l}), u\in U_{\SO_{2l}}}
\B_{\pi,\psi}(twu)\tilde{f}_v(w_{l,l} twu, I_l)\\
&=\sum_{t\in A_l, w\in \mathrm{B}_{l-1}(\SO_{2l}), u\in U_{\SO_{2l}}}
(\B_{\pi,\psi}+\B_{c\cdot\pi,\psi})(twu)\tilde{f}_v(w_{l,l} twu, I_l).
\end{align*}
Hence we have proven the lemma for any $l$.
\end{proof}

We are ready to compute the sets determined by the twists by $\GL_n$, $n\leq l$. We remark that it is possible to prove the below theorem assuming that the $\gamma$-factors are equal only for twists by $\GL_l.$ Indeed, computations of the embedding of $\SO_{2l}$ into $\SO_{2l+1}$ show that any element of a Bruhat cell corresponding to $\mathrm{B}_n(\SO_{2l})$ for $n\leq l-2$ does not embed into the support of $\tilde{f}_v$ (and we obtain a similar result for the set determined by Theorem \ref{GL_{l-1}}). However, this is not necessary for the converse theorem. Instead, we use the results of the previous sections to eliminate the contributions from Bruhat cells corresponding to $\mathrm{B}_n(\SO_{2l})$ for $n\leq l-2$ (and the set determined by Theorem \ref{GL_{l-1}}).

\begin{thm}\label{GL_l}
Let $\pi$ and $\pi^\prime$ be irreducible cuspidal $\psi$-generic representations of $\SO_{2l}$ with the same central character. If $\gamma(\pi\times\tau,\psi)=\gamma(\pi^\prime\times\tau,\psi)$ for all irreducible generic representations $\tau$ of $\GL_n$ where $n\leq l,$
then 
\begin{enumerate}
    \item[(1)] $$(\B_{\pi,\psi}+\B_{c\cdot\pi,\psi})(tw)=(\B_{\pi^\prime,\psi}+\B_{c\cdot\pi^\prime,\psi})(tw),$$ for any $t\in T_{\SO_{2l}}$ and $w\in \mathrm{B}_l(\SO_{2l})\cup\mathrm{B}_l^c(\SO_{2l})$;  
    \item[(2)]  $$(\B_{\pi,\psi}+\B_{c\cdot\pi,\psi})(tw)=(\B_{\pi^\prime,\psi}+\B_{c\cdot\pi^\prime,\psi})(tw),$$ for any $w\in\mathrm{B}_{l-1}(\SO_{2l})$ and $t\in T_{\SO_{2l}}$ with $t_l\neq \pm 1$ if $l$ is odd or $t_l\neq \pm\frac{1}{2}$ if $l$ is even.
\end{enumerate}
\end{thm}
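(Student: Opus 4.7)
The plan is to compare the zeta integrals $\Psi(\B_{\pi,\psi},\tilde f_v)$ and $\Psi(\B_{\pi',\psi},\tilde f_v)$, which are forced to be equal by the $\GL_l$ gamma factor hypothesis together with Proposition \ref{GL_lnonzero} (both unramified integrals equal $W_v(w_{l,l})$). Expand each via the Bruhat decomposition $\SO_{2l}=\bigsqcup_{w} U_{\SO_{2l}}T_{\SO_{2l}}wU_{\SO_{2l}}$ and restrict to the Bessel support using Proposition \ref{BesselPartition}, so that the only $w$ appearing lie in $\bigsqcup_{n=0}^{l-2}\mathrm{B}_n(\SO_{2l}) \sqcup \mathrm{B}_{l-1}(\SO_{2l}) \sqcup \mathrm{B}_l(\SO_{2l}) \sqcup \mathrm{B}_l^c(\SO_{2l})$. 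By Proposition \ref{Besselprop}, each contribution factors as $\B(tw)\psi(u)\tilde f_v(w_{l,l}twu, I_l)$, and by Propositions \ref{l-1 twist embed}, \ref{l twist supp}, \ref{l twist conj supp} the support of $\tilde f_v$ (via Lemma \ref{intertwine}(1)) already restricts the $\mathrm{B}_{l-1}(\SO_{2l})$ sum to the set $T_l$ on the torus side.

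Next, I would cancel out all cells already controlled. For $w\in\mathrm{B}_n(\SO_{2l})$ with $n\le l-2$, Theorem \ref{GL_n} (for $\B_{\pi,\psi}$) and Corollary \ref{cGL_n} (for $\B_{c\cdot\pi,\psi}$, whose twin via Lemma \ref{l contribution to GL_l} appears after rewriting the $\mathrm{B}_l^c$ part) ensure that the $\pi$ and $\pi'$ contributions agree pointwise; they therefore cancel in $\Psi(\B_{\pi,\psi}-\B_{\pi',\psi},\tilde f_v)=0$. Similarly, the part of $\mathrm{B}_{l-1}(\SO_{2l})$ with $t_l\in\{\pm c_l\}$ (using $c_l=1$ for $l$ odd and $c_l=\tfrac12$ for $l$ even) is controlled by Theorem \ref{GL_{l-1}} combined with the central character, so these contributions also cancel. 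What survives is the sum over $\mathrm{B}_l(\SO_{2l})\sqcup\mathrm{B}_l^c(\SO_{2l})$ and the $T_l$-portion of $\mathrm{B}_{l-1}(\SO_{2l})$.

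Now I apply Lemmas \ref{l contribution to GL_l} and \ref{l-1 contribution to GL_l} to fold the $\mathrm{B}_l^c$ contribution into $\mathrm{B}_l$ (replacing $\B_{\pi,\psi}$ with $\B_{c\cdot\pi,\psi}$ on the new half) and to fold the two halves of the $\mathrm{B}_{l-1}$ sum together (picking up $\B_{\pi,\psi}+\B_{c\cdot\pi,\psi}$ on $A_l$). Doing the same for $\pi'$, the vanishing integral becomes
\begin{align*}
0 &= \sum_{\substack{t\in T_{\SO_{2l}} \\ w\in\mathrm{B}_l(\SO_{2l})\\ u\in U_{\SO_{2l}}}}(\Phi_\pi-\Phi_{\pi'})(twu)\tilde f_v(w_{l,l}twu,I_l) \\
&\quad + \sum_{\substack{t\in A_l\\ w\in\mathrm{B}_{l-1}(\SO_{2l})\\ u\in U_{\SO_{2l}}}}(\Phi_\pi-\Phi_{\pi'})(twu)\tilde f_v(w_{l,l}twu,I_l),
\end{align*}
where $\Phi_\rho=\B_{\rho,\psi}+\B_{c\cdot\rho,\psi}$. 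By Propositions \ref{l twist supp}, \ref{l twist conj supp}, \ref{l-1 twist embed}, for each pair $(t,w)$ in the support there is a unique $a(t,w)\in\GL_l$ and $n_1,n_2\in V_l$ with $tw=l_l(a(t,w))n_1 w_l n_2$ (in the embedding into $\SO_{2l+1}$), and Lemma \ref{intertwine}(3) evaluates $\tilde f_v(w_{l,l}twu,I_l)$ to $\psi(u')W_v^*(a(t,w))$ up to the effect of $w_{l,l}$ (which is a harmless diagonal renormalization). Reparametrizing by $a$, the sum takes the form $\sum_{a\in X}H(a)W_v^*(a)=0$ where $X\subset\GL_l$ is the image of the parametrization and $H(a)=(\Phi_\pi-\Phi_{\pi'})$ of the associated $tw$; the explicit expressions for $a^*$ in those propositions show that $X$ is $U_{\GL_l}$-stable on the left and that $H(ua)=\psi(u)H(a)$. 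Applying Lemma \ref{Nien} yields $H\equiv 0$ on $X$, which is precisely the two claims of the theorem.

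The main obstacle will be Step three: carrying out the two-fold change of variables cleanly so that the $\mathrm{B}_l\cup\mathrm{B}_l^c$ contribution and the $\mathrm{B}_{l-1}$ contribution can both be repackaged as a single Whittaker-type sum in $a\in\GL_l$, and verifying that the resulting $X$ and $H$ satisfy the hypotheses of Lemma \ref{Nien}. The parity-dependent explicit formulas for $a^*$ in Propositions \ref{l twist supp}, \ref{l twist conj supp}, \ref{l-1 twist embed} and the need to treat the $\tilde{t}$-twist for odd indices separately, together with isolating the $w_{l,l}$ contribution from $\tilde f_v(w_{l,l}\,\cdot\,,I_l)$, make the bookkeeping delicate; the outcome, however, is that all these ingredients align cleanly because $w_{l,l}\in L_l$ and $Q_l w_l V_l$ is stable under left multiplication by $w_{l,l}$, so the support analyses of Section \ref{BesselSupport} apply unchanged.
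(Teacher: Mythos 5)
Your proposal is correct and follows essentially the same route as the paper: nonvanishing of the unramified integrals plus the $\gamma$-factor hypothesis forces $\Psi(\B_{\pi,\psi},\tilde f_v)=\Psi(\B_{\pi',\psi},\tilde f_v)$, the lower cells are cancelled by Theorems \ref{GL_n} and \ref{GL_{l-1}} while Propositions \ref{l-1 twist embed}, \ref{l twist supp}, \ref{l twist conj supp} restrict the surviving support, Lemmas \ref{l contribution to GL_l} and \ref{l-1 contribution to GL_l} fold everything into sums of $\B_{\pi,\psi}+\B_{c\cdot\pi,\psi}$, and Lemma \ref{Nien} finishes. The ``delicate bookkeeping'' you flag (the parity-dependent $a^*$ computations, the sets $X_l$ and $A_{\GL_l}$, and the $w_{l,l}$ renormalization) is exactly where the paper spends its effort, and your description of how it resolves is accurate.
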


\begin{proof}
By Proposition \ref{GL_lnonzero}, we have $\Psi(\B_{\pi,\psi},f_v)=\Psi(\B_{\pi',\psi},f_v)=W_v(w_{l,l}).$ By assumption, $\gamma(\pi\times\tau,\psi)=\gamma(\pi^\prime\times\tau,\psi)$ for all irreducible generic representations $\tau$ of $\GL_l$, and hence we have that $\Psi(\B_{\pi,\psi},\tilde{f}_v)=\Psi(\B_{\pi',\psi},\tilde{f}_v).$
By definition,
$$
\Psi(\B_{\pi,\psi},\tilde{f}_v)=\sum_{g\in U_{\SO_{2l}}\setminus\SO_{2l}} \B_{\pi,\psi}(g)\tilde{f}_v(w_{l,l}g, I_{l}),
$$ and hence
$$
0=\sum_{g\in U_{\SO_{2l}}\setminus\SO_{2l}}( \B_{\pi,\psi}-\B_{\pi',\psi})(g)\tilde{f}_v(w_{l,l}g, I_{l}).
$$
Let $T_l=\{t=\mathrm{diag}(t_1,\dots,t_l,t_l\inv, \dots, t_1\inv)\in T_{\SO_{2l}} \, | \, t_l\neq \pm c_l\}$ where $c_l= 1$ if $l$ is odd or $\frac{1}{2}$ if $l$ is even.
Since $\gamma(\pi\times\tau,\psi)=\gamma(\pi^\prime\times\tau,\psi)$ for all irreducible generic representations $\tau$ of $\GL_n$ with $n\leq l-1$, by Propositions \ref{l-1 twist embed}, \ref{l twist supp}, \ref{l twist conj supp}, and Theorems \ref{GL_n} and \ref{GL_{l-1}}, 
\begin{align*}
0&=\sum_{t\in T_{\SO_{2l}}, w\in \mathrm{B}_l(\SO_{2l}), u\in U_{\SO_{2l}}}
(\B_{\pi,\psi}-\B_{\pi^\prime,\psi})(twu)\tilde{f}_v(w_{l,l}twu, I_l)\\
&\quad+\sum_{t\in T_{\SO_{2l}}, w\in \mathrm{B}_l^c(\SO_{2l}), u\in U_{\SO_{2l}}}
(\B_{\pi,\psi}-\B_{\pi^\prime,\psi})(twu)\tilde{f}_v(w_{l,l} twu, I_l)\\
&\quad+\sum_{t\in T_l, w\in \mathrm{B}_{l-1}(\SO_{2l}), u\in U_{\SO_{2l}}}
(\B_{\pi,\psi}-\B_{\pi^\prime,\psi})(twu)\tilde{f}_v(w_{l,l} twu, I_l).
\end{align*}
By Lemma \ref{l contribution to GL_l},
\begin{align*}
    &\quad\sum_{t\in T_{\SO_{2l}}, w\in \mathrm{B}_l(\SO_{2l}), u\in U_{\SO_{2l}}}
(\B_{\pi,\psi}-\B_{\pi^\prime,\psi})(twu)\tilde{f}_v(w_{l,l}twu, I_l)    \\
&\quad+\sum_{t\in T_{\SO_{2l}}, w\in \mathrm{B}_l^c(\SO_{2l}), u\in U_{\SO_{2l}}}
(\B_{\pi,\psi}-\B_{\pi^\prime,\psi})(twu)\tilde{f}_v(w_{l,l} twu, I_l) \\
&=\sum_{t\in T_{\SO_{2l}}, w\in \mathrm{B}_l(\SO_{2l}), u\in U_{\SO_{2l}}}
(\B_{\pi,\psi}-\B_{\pi^\prime,\psi}+\B_{c\cdot\pi,\psi}-\B_{c\cdot\pi^\prime,\psi})(twu)\tilde{f}_v(w_{l,l}twu, I_l).
\end{align*}
By Lemma \ref{l-1 contribution to GL_l},
\begin{align*}
&\quad\sum_{t\in T_l, w\in \mathrm{B}_{l-1}(\SO_{2l}), u\in U_{\SO_{2l}}}
(\B_{\pi,\psi}-\B_{\pi^\prime,\psi})(twu)\tilde{f}_v(w_{l,l} twu, I_l)\\
&=\sum_{t\in A_l, w\in \mathrm{B}_{l-1}(\SO_{2l}), u\in U_{\SO_{2l}}}
(\B_{\pi,\psi}-\B_{\pi^\prime,\psi}+\B_{c\cdot\pi,\psi}-\B_{c\cdot\pi^\prime,\psi})(twu)\tilde{f}_v(w_{l,l} twu, I_l).
\end{align*}
Therefore, we have that
\begin{align*}
0&=\sum_{t\in T_{\SO_{2l}}, w\in \mathrm{B}_l(\SO_{2l}), u\in U_{\SO_{2l}}}
(\B_{\pi,\psi}-\B_{\pi^\prime,\psi}+\B_{c\cdot\pi,\psi}-\B_{c\cdot\pi^\prime,\psi})(twu)\tilde{f}_v(w_{l,l}twu, I_l)\\
&\quad+\sum_{t\in A_l, w\in \mathrm{B}_{l-1}(\SO_{2l}), u\in U_{\SO_{2l}}}
(\B_{\pi,\psi}-\B_{\pi^\prime,\psi}+\B_{c\cdot\pi,\psi}-\B_{c\cdot\pi^\prime,\psi})(twu)\tilde{f}_v(w_{l,l} twu, I_l).
\end{align*}

Next, we define a function $f$ on a subset of $\GL_l$ so that we may apply Lemma $\ref{Nien}.$ The first step in this is to describe the arguments of the $W_v^*$'s. That is, we describe $a_i$, where the image of $tw$ under the embedding into $\SO_{2l+1}$ is $l_l(a_i)n_1 w_l n_2$ where $a_i\in\GL_l$ and $n_1,n_2\in V_l$ and $i=1,2$ if $w\in \mathrm{B}_{l}(\SO_{2l})$ or $w\in \mathrm{B}_{l-1}(\SO_{2l})$ respectively. We recall the setup. For $w\in\mathrm{B}_l^c(\SO_{2l})$, by Proposition \ref{l-1 Besselpart}, there exists $w^\prime\in W(\GL_l)$ such that $w=t_l(w^\prime)\tilde{w}_l$ where $w'=(w_{i,j}')_{i,j=1}^l\in W(\GL_l)$ with $w_{l,1}'=0.$ We also let $(w')^*=(w_{i,j}'^*)_{i,j=1}^l$ and $r$ be such that $w_{r,1}'=1.$ 

First, suppose that $l$ is even. Let 
$$
a_1^*=\left(\begin{matrix}
\frac{t_l\inv(w_{1,j}'^*)_{j=1}^{l-1}}{4} & \frac{1}{4} \\
\mathrm{diag}(t_{l-1}\inv,\dots, t_1\inv) (w_{i,j}'^*)_{i=2,j=1}^{i=l,j=l-1} & \mathrm{diag}(t_{l-1}\inv,\dots, t_1\inv)\frac{(w_{i,l}'^*)_{i=2}^l}{4} \\
\end{matrix}\right).
$$
Then,
$$
a_1^*=\left(\begin{matrix}
1 & & t_r & & \\
  & \ddots & & & \\
  & & 1 & & \\
  & & & \ddots & \\
  & & & & 1
\end{matrix}\right)
 \mathrm{diag}\left(\frac{t_l\inv}{4},t_{l-1}\inv,\dots, t_{r+1}\inv, \frac{t_r\inv}{4},t_{r-1}\inv,\dots,t_1\inv\right)
w'^*.
$$
where $t_r$ is the $(1,l-r+1)$ entry of the unipotent matrix. Thus, we obtain
$$
a_1=\left(\begin{matrix}
1 & &  & & \\
  & \ddots & & & \\
  & & 1 & & -t_r \\
  & & & \ddots & \\
  & & & & 1
\end{matrix}\right) \mathrm{diag}\left(t_1,\dots,t_{r-1}, 4t_r,t_{r+1},\dots,t_{l-1},4t_l \right) w',
$$
where $-t_r$ is the $(r,l)$ entry of the unipotent matrix. This determines $a$ on the $\mathrm{B}_{l}(\SO_{2l})$ sum. Let $t_{w'}\in T_{\GL_l}$ be such that $$t_{w'}\mathrm{diag}\left(t_1,\dots,t_{r-1}, 4t_r,t_{r+1},\dots,t_{l-1},4t_l \right)=\mathrm{diag}\left(t_1,\dots,t_{r-1}, t_r,t_{r+1},\dots,t_{l-1},t_l \right).$$ That is, $t_{w'}$ is a diagonal matrix consisting of $1$'s on the diagonal, except in the $(r,r)$ and $(l,l)$ coordinates where it is $\frac{1}{4}$.

Next, we consider the $a_2$ in the $\mathrm{B}_{l-1}(\SO_{2l})$ sum. Let $w^\prime=\left(\begin{matrix}
& w'' \\
1 & 
\end{matrix}\right)$ and
$$
a_2^*=\left(\begin{matrix}
& \frac{1}{4}+\frac{1}{4}t_l+\frac{1}{16}t_l\inv \\
\mathrm{diag}(t_{l-1}\inv,\dots,t_{1}\inv)(w'')^* &
\end{matrix}\right).
$$
Then,
$$
a_2^*=\left(\begin{matrix}
\frac{1}{4}+\frac{1}{4}t_l+\frac{1}{16}t_l\inv & & & \\
& t_{l-1}\inv & & \\
& & \ddots & \\
& & & t_{1}\inv 
\end{matrix}\right)
(w')^*.
$$
Hence
$$
a_2=\left(\begin{matrix}
t_1 & & & \\
& \ddots & & \\
& & t_{l-1} & \\
& & & (\frac{1}{4}+\frac{1}{4}t_l+\frac{1}{16}t_l\inv)\inv
\end{matrix}\right)
w'.
$$
Recall we partitioned $\mathbb{F}_q^\times\setminus\{\pm\frac{1}{2}\}$ into two disjoint sets $A$ and $B$ such that if $t_l\in A$ then $\frac{t_l\inv}{4}\in B.$ Suppose $t_l, s_l\in A$ with $\frac{1}{4}+\frac{1}{4}t_l+\frac{1}{16}t_l\inv=\frac{1}{4}+\frac{1}{4}s_l+\frac{1}{16}s_l\inv$. This gives a quadratic equation in $s_l$ whose roots are $s_l=t_l$ and $s_l=\frac{t_l\inv}{4}.$ Since $s,t \in A_l$ it follows that we must have $t_l=s_l.$ Let 
$$A_{GL_l}=\left\{\left(\begin{matrix}
t_1 & & & \\
& \ddots & & \\
& & t_{l-1} & \\
& & & (\frac{1}{4}+\frac{1}{4}t_l+\frac{1}{16}t_l\inv)\inv
\end{matrix}\right) \, | \, t_1,\dots,t_{l-1}\in\mathbb{F}_q^\times, t_l\in A \right\}.$$
The following map is well defined on $A_{GL_l}$: 

$$
\xi\left(\begin{matrix}
t_1 & & & \\
& \ddots & & \\
& & t_{l-1} & \\
& & & (\frac{1}{4}+\frac{1}{4}t_l+\frac{1}{16}t_l\inv)\inv
\end{matrix}\right)=\mathrm{diag}(t_1,\dots,t_l).
$$

Let $u=(u_{i,j})_{i,j=1}^l.$ Note that $u_{l,l+1}=0.$ Then, the embedding of $u$ in $\SO_{2l+1}$ is

$$
\dot{u}=\left(\begin{matrix}
(u_{i,j})_{i,j=1}^{i,j=l-1} & \left(\begin{matrix}
\frac{(u_{i,l})_{i=1}^{l-1}}{4}-\frac{(u_{i,l+1})_{i=1}^{l-1}}{2} & * & * \end{matrix}\right) & * \\
 & I_3 & * \\
 & & *
\end{matrix}\right).
$$
Let $\tilde{u}=\left(\begin{matrix}
(u_{i,j})_{i,j=1}^{i,j=l-1} &
\frac{(u_{i,l})_{i=1}^{l-1}}{4}-\frac{(u_{i,l+1})_{i=1}^{l-1}}{2} \\
0 & 1
\end{matrix}\right).$ Then $\dot{u}=l_l(\tilde{u})n_3.$ where $n_3\in V_l.$ The embedding takes $twu$ to $ l_l(a_i) n_1 w_l n_2 l_l(\tilde{u})n_3= n_4 l_l(a_i \tilde{u}^*) w_l n_5$ where $n_4, n_5\in V_l$ and $i=1,2$ if $w\in \mathrm{B}_{l}(\SO_{2l})$ or $w\in \mathrm{B}_{l-1}(\SO_{2l})$ respectively. Thus, by Proposition \ref{intertwine}, $\tilde{f}_v(w_{l,l} twu, I_l)=W_v^*(\mathrm{diag}(\frac{1}{2},\dots,\frac{1}{2})a_i \tilde{u}^*).$

Next, we define a function on a subset of $\GL_l$ using its Bruhat decomposition. Specifically, we partition the Weyl group of $W(\GL_l)$ into $W_1(\GL_l)$ and $W_2(\GL_l)$ where $w'\in W_1(\GL_l)$ if $w^\prime\neq\left(\begin{matrix}
& w'' \\
1 & 
\end{matrix}\right)$ for any $w''\in W(\GL_{l-1}).$  and $w'\in W_2(\GL_l)$ if $w^\prime=\left(\begin{matrix}
& w'' \\
1 & 
\end{matrix}\right)$ for some $w''\in W(\GL_{l-1}).$ By Proposition \ref{l-1 Besselpart}, we have $w=t_l(w')\tilde{w}_l\in \mathrm{B}_{l}(\SO_{2l})$ if $w'\in W_1(\GL_l)$ and $w=t_l(w')\tilde{w}_l\in \mathrm{B}_{l-1}(\SO_{2l})$ if $w'\in W_2(\GL_l).$ Let
$$
X_l=\left(\bigsqcup_{w'\in W_1(\GL_l)} U_{\GL_l}T_{\GL_l} w' U_{\GL_l}\right)
\bigsqcup
\left(\bigsqcup_{w'\in W_2(\GL_l)} U_{\GL_l}A_{\GL_l} w' U_{\GL_l}\right).
$$

Recall the definition of $t_{w'}$ above. For $g=u_1 t w' u_2\in U_{\GL_l}T_{\GL_l} w' U_{\GL_l}$ such that
$w'\in W_1(\GL_l)$ we define 
$$f(g)=
(\B_{\pi,\psi}-\B_{\pi^\prime,\psi}+\B_{c\cdot\pi,\psi}-\B_{c\cdot\pi^\prime,\psi})(t_l(\mathrm{diag}(2,\dots,2) t_l(u_1 t_{w'} t w' u_2)\tilde{w}_l).
$$ 
For $g=u_1 t w' u_2 \in U_{\GL_l}A_{\GL_l} w' U_{\GL_l}$ such that
$w'\in W_2(\GL_l)$ we define $$f(g)=
(\B_{\pi,\psi}-\B_{\pi^\prime,\psi}+\B_{c\cdot\pi,\psi}-\B_{c\cdot\pi^\prime,\psi})(t_l(\mathrm{diag}(2,\dots,2))t_l(u_1 \xi(t) w' u_2)\tilde{w}_l).
$$
We have $f(ug)=\psi(u)f(g)$ for any $u\in U_{\GL_l}$ and $g\in X_l.$ Also, $t_l(g)\tilde{w}_l=\tilde{w}_l t_l(g^*)$ for any $g\in\GL_l$ (we are still assuming that $l$ is even). Therefore, 
\begin{align*}
0&=\sum_{t\in T_{\GL_l}, w\in W_1(\GL_l), u\in U_{\SO_{2l}}}
f(\mathrm{diag}(\frac{1}{2},\dots,\frac{1}{2})tw\tilde{u}^*) W_v^*(\mathrm{diag}(\frac{1}{2},\dots,\frac{1}{2})tw\tilde{u}^*)\\
&\quad+\sum_{t\in A_{\GL_l}, w\in W_2(\GL_l), u\in U_{\SO_{2l}}}
f(\mathrm{diag}(\frac{1}{2},\dots,\frac{1}{2})tw\tilde{u}^*)W_v^*(\mathrm{diag}(\frac{1}{2},\dots,\frac{1}{2})tw \tilde{u}^*).
\end{align*}
Thus, by Lemma \ref{Nien}, $f$ must identically vanish on $X_l$. Therefore, for $l$ even, we have that $(\B_{\pi,\psi}+\B_{c\cdot\pi,\psi})(tw)=(\B_{\pi^\prime,\psi}+\B_{c\cdot\pi^\prime,\psi})(tw)$  for any $t\in\SO_{2l}$ and any $w\in \mathrm{B}_l(\SO_{2l})\cup\mathrm{B}_l^c(\SO_{2l}).$ Also, $(\B_{\pi,\psi}+\B_{c\cdot\pi,\psi})(tw)=(\B_{\pi^\prime,\psi}+\B_{c\cdot\pi^\prime,\psi})(tw)$ for any $w\in\mathrm{B}_{l-1}(\SO_{2l})$ and $t\in A_{l}$. Also, conjugating the previous equation by $\tilde{t}c$ gives $(\B_{\pi,\psi}+\B_{c\cdot\pi,\psi})(tw)=(\B_{\pi^\prime,\psi}+\B_{c\cdot\pi^\prime,\psi})(tw)$ for any $w\in\mathrm{B}_{l-1}(\SO_{2l})$ and $t\in B_{l}$ hence the theorem for $l$ even.

Next, suppose that $l$ is odd. Let 
$$
a_1^*=\left(\begin{matrix}
\frac{t_l\inv(w_{1,j}'^*)_{j=1}^{l-1}}{4} & \frac{1}{4} \\
\mathrm{diag}(t_{l-1}\inv,\dots, t_1\inv) (w_{i,j}'^*)_{i=2,j=1}^{i=l,j=l-1} & \mathrm{diag}(t_{l-1}\inv,\dots, t_1\inv)\frac{-(w_{i,l}'^*)_{i=2}^l}{2} \\
\end{matrix}\right).
$$
Then,
$$
a_1^*=\left(\begin{matrix}
1 & & \frac{-t_r}{2} & & \\
  & \ddots & & & \\
  & & 1 & & \\
  & & & \ddots & \\
  & & & & 1
\end{matrix}\right)
 \mathrm{diag}\left(\frac{t_l\inv}{4},t_{l-1}\inv,\dots, t_{r+1}\inv, \frac{-t_r\inv}{2},t_{r-1}\inv,\dots,t_1\inv\right)
w'^*,
$$
where $t_r$ is the $(1,l-r+1)$ entry of the unipotent matrix. Thus, we obtain
$$
a_1=\left(\begin{matrix}
1 & &  & & \\
  & \ddots & & & \\
  & & 1 & & \frac{t_r}{2} \\
  & & & \ddots & \\
  & & & & 1
\end{matrix}\right) \mathrm{diag}\left(t_1,\dots,t_{r-1}, -2t_r,t_{r+1},\dots,t_{l-1},4t_l \right) w',
$$
where $-t_r$ is the $(r,l)$ entry of the unipotent matrix. This determines $a$ on the $\mathrm{B}_{l}(\SO_{2l})$ sum. Let $t_{w'}\in T_{\GL_l}$ be such that $$t_{w'}\mathrm{diag}\left(t_1,\dots,t_{r-1}, -2t_r,t_{r+1},\dots,t_{l-1},4t_l \right)=\mathrm{diag}\left(t_1,\dots,t_{r-1}, t_r,t_{r+1},\dots,t_{l-1},t_l \right).$$ That is, $t_{w'}$ is a diagonal matrix consisting of $1$'s on the diagonal, except in the $(r,r)$ and $(l,l)$ coordinates where it is $\frac{-1}{2}$ and $\frac{1}{4}$ respectively.

Next, we consider the $a$ in the $\mathrm{B}_{l-1}(\SO_{2l})$ sum. Let $w^\prime=\left(\begin{matrix}
& w'' \\
1 & 
\end{matrix}\right)$ and
$$
a_2^*=\left(\begin{matrix}
& \frac{1}{2}(\frac{1}{2}-\frac{1}{4}(t_l+t_l\inv)) \\
\mathrm{diag}(t_{l-1}\inv,\dots,t_{1}\inv)(w'')^* &
\end{matrix}\right).
$$
Then,
$$
a_2^*=\left(\begin{matrix}
\frac{1}{2}(\frac{1}{2}-\frac{1}{4}(t_l+t_l\inv)) & & & \\
& t_{l-1}\inv & & \\
& & \ddots & \\
& & & t_{1}\inv 
\end{matrix}\right)
(w')^*.
$$
Hence
$$
a_2=\left(\begin{matrix}
t_1 & & & \\
& \ddots & & \\
& & t_{l-1} & \\
& & & (\frac{1}{2}(\frac{1}{2}-\frac{1}{4}(t_l+t_l\inv)))\inv
\end{matrix}\right)
w'.
$$
Recall we partitioned $\mathbb{F}_q^\times\setminus\{\pm\frac{1}{2}\}$ into two disjoint sets $A$ and $B$ such that if $t_l\in A$ then $t_l\inv\in B.$ Suppose $t_l, s_l\in A$ with $\frac{1}{2}(\frac{1}{2}-\frac{1}{4}(t_l+t_l\inv))=\frac{1}{2}(\frac{1}{2}-\frac{1}{4}(s_l+s_l\inv))$. This gives a quadratic equation in $s_l$ whose roots are $s_l=t_l$ and $s_l=t_l\inv.$ Since $s,t \in A_l$ it follows that we must have $t_l=s_l.$ Let 
$$A_{GL_l}=\left\{\left(\begin{matrix}
t_1 & & & \\
& \ddots & & \\
& & t_{l-1} & \\
& & & (\frac{1}{2}(\frac{1}{2}-\frac{1}{4}(t_l+t_l\inv)))\inv
\end{matrix}\right) \, | \, t_1,\dots,t_{l-1}\in\mathbb{F}_q^\times, t_l\in A \right\}.$$
The following map is well defined on $A_{GL_l}$: 

$$
\xi\left(\begin{matrix}
t_1 & & & \\
& \ddots & & \\
& & t_{l-1} & \\
& & & (\frac{1}{2}(\frac{1}{2}-\frac{1}{4}(t_l+t_l\inv)))\inv
\end{matrix}\right)=\mathrm{diag}(t_1,\dots,t_l).
$$

Let $u=(u_{i,j})_{i,j=1}^l.$ Note that $u_{l,l+1}=0.$ Then, the embedding of $u$ in $\SO_{2l+1}$ is

$$
\dot{u}=\left(\begin{matrix}
(u_{i,j})_{i,j=1}^{i,j=l-1} & \left(\begin{matrix}
\frac{(u_{i,l})_{i=1}^{l-1}}{4}-\frac{(u_{i,l+1})_{i=1}^{l-1}}{2} & * & * \end{matrix}\right) & * \\
 & I_3 & * \\
 & & *
\end{matrix}\right).
$$
Let $\tilde{u}=\left(\begin{matrix}
(u_{i,j})_{i,j=1}^{i,j=l-1} &
\frac{(u_{i,l})_{i=1}^{l-1}}{4}-\frac{(u_{i,l+1})_{i=1}^{l-1}}{2} \\
0 & 1
\end{matrix}\right).$ Then $\dot{u}=l_l(\tilde{u})n_3.$ where $n_3\in V_l.$ The embedding takes $twu$ to $ l_l(a_i) n_1 w_l n_2 l_l(\tilde{u})n_3= n_4 l_l(a_i \tilde{u}^*) w_l n_5$ where $n_4, n_5\in V_l$ and $i=1,2$ if $w\in \mathrm{B}_{l}(\SO_{2l})$ or $w\in \mathrm{B}_{l-1}(\SO_{2l})$ respectively. Thus, by Proposition \ref{intertwine}, $\tilde{f}_v(w_{l,l} twu, I_l)=W_v^*(\mathrm{diag}(\frac{1}{2},\dots,\frac{1}{2})a_i \tilde{u}^*).$

Next, we define a function on a subset of $\GL_l$ using its Bruhat decomposition. Specifically, we partition the Weyl group of $W(\GL_l)$ into two sets, $W_1(\GL_l)$ and $W_2(\GL_l)$, by  $w'\in W_1(\GL_l)$ if $w^\prime\neq\left(\begin{matrix}
& w'' \\
1 & 
\end{matrix}\right)$ for any $w''\in W(\GL_{l-1})$ and $w'\in W_2(\GL_l)$ if $w^\prime=\left(\begin{matrix}
& w'' \\
1 & 
\end{matrix}\right)$ for some $w''\in W(\GL_{l-1}).$ By Proposition \ref{l-1 Besselpart}, we have $w=t_l(w')\tilde{w}_l\in \mathrm{B}_{l}(\SO_{2l})$ if $w'\in W_1(\GL_l)$ and $w=t_l(w')\tilde{w}_l\in \mathrm{B}_{l-1}(\SO_{2l})$ if $w'\in W_2(\GL_l).$ Let
$$
X_l=\left(\bigsqcup_{w'\in W_1(\GL_l)} U_{\GL_l}T_{\GL_l} w' U_{\GL_l}\right)
\bigsqcup
\left(\bigsqcup_{w'\in W_2(\GL_l)} U_{\GL_l}A_{\GL_l} w' U_{\GL_l}\right).
$$

Recall the definition of $t_{w'}$ above. For $g=u_1 t w' u_2\in U_{\GL_l}T_{\GL_l} w' U_{\GL_l}$ such that
$w'\in W_1(\GL_l)$ we define 
$$f(g)=
(\B_{\pi,\psi}-\B_{\pi^\prime,\psi}+\B_{c\cdot\pi,\psi}-\B_{c\cdot\pi^\prime,\psi})(t_l(\mathrm{diag}(2,\dots,2) t_l(u_1 t_{w'} t w' u_2)\tilde{w}_l).
$$ 
For $g=u_1 t w' u_2 \in U_{\GL_l}A_{\GL_l} w' U_{\GL_l}$ with
$w'\in W_2(\GL_l)$ we define $$f(g)=
(\B_{\pi,\psi}-\B_{\pi^\prime,\psi}+\B_{c\cdot\pi,\psi}-\B_{c\cdot\pi^\prime,\psi})(t_l(\mathrm{diag}(2,\dots,2))t_l(u_1 \xi(t) w' u_2)\tilde{w}_l).
$$
We have $f(ug)=\psi(u)f(g)$ for any $u\in U_{\GL_l}$ and $g\in X_l.$ Let $g=(g_{i,j})_{i,j=1}^l$ and $g^*=(g^*_{i,j})_{i,j=1}^l$. Then $$
t_l(g)\tilde{w}_l=\tilde{w}_l
\left(\begin{matrix}
(g_{i,j})_{i,j=1}^{l-1} & 0 & (g_{i,l})_{i=1}^{l-1} & 0 \\
0 & g_{1,1}^* & 0 & (g_{1,j}^*)_{j=2}^{l} \\
(g_{l,j})_{j=1}^{l-1} & 0 & g_{l,l} & 0 \\
0 & (g_{i,1}^*)_{i=2}^{l} & 0 & (g_{i,j}^*)_{i,j=2}^{l} \\
\end{matrix}
\right).
$$
In particular, if $u\in U_{\GL_l}$, then 
$$
t_l(u)\tilde{w}_l=\tilde{w}_l
\left(\begin{matrix}
(g_{i,j})_{i,j=1}^{l-1} & 0 & (g_{i,l})_{i=1}^{l-1} & 0 \\
 & 1 & 0 & (g_{1,j}^*)_{j=2}^{l} \\
 &  & 1 & 0 \\
 & &  & (g_{i,j}^*)_{i,j=2}^{l} \\
\end{matrix}
\right),
$$
and the last matrix is upper triangular.
Therefore, 
\begin{align*}
0&=\sum_{t\in T_{\GL_l}, w\in W_1(\GL_l), u\in U_{\SO_{2l}}}
f(\mathrm{diag}(\frac{1}{2},\dots,\frac{1}{2})tw\tilde{u}^*) W_v^*(\mathrm{diag}(\frac{1}{2},\ldots,\frac{1}{2})tw\tilde{u}^*)\\
&\quad+\sum_{t\in A_{\GL_l}, w\in W_2(\GL_l), u\in U_{\SO_{2l}}}
f(\mathrm{diag}(\frac{1}{2},\dots,\frac{1}{2})tw\tilde{u}^*)W_v^*(\mathrm{diag}(\frac{1}{2},\dots,\frac{1}{2})tw \tilde{u}^*).
\end{align*}
Thus, by Lemma \ref{Nien}, $f$ must identically vanish on $X_l$. Therefore, for $l$ odd, we have that $(\B_{\pi,\psi}+\B_{c\cdot\pi,\psi})(tw)=(\B_{\pi^\prime,\psi}+\B_{c\cdot\pi^\prime,\psi})(tw)$  for any $t\in\SO_{2l}$ and $w\in \mathrm{B}_l(\SO_{2l}).$ Also, we have $(\B_{\pi,\psi}+\B_{c\cdot\pi,\psi})(tw)=(\B_{\pi^\prime,\psi}+\B_{c\cdot\pi^\prime,\psi})(tw)$ for any $w\in\mathrm{B}_{l-1}(\SO_{2l})$ and $t\in A_{l}$. Conjugation by $\tilde{t}c$ gives $(\B_{\pi,\psi}+\B_{c\cdot\pi,\psi})(tw)=(\B_{\pi^\prime,\psi}+\B_{c\cdot\pi^\prime,\psi})(tw)$ for any $w\in\mathrm{B}_{l-1}(\SO_{2l})$ and $t\in B_{l}$ for $l$ odd.

Finally, for any $l$, we have 
$(\B_{\pi,\psi}+\B_{c\cdot\pi,\psi})(tw)=(\B_{\pi^\prime,\psi}+\B_{c\cdot\pi^\prime,\psi})(tw)$  for any $t\in\SO_{2l}$ and $w\in \mathrm{B}_l(\SO_{2l}).$  Also, 
\begin{align*}
\B_{\pi,\psi}(tw)
&=\B_{\pi,\psi}(c\tilde{t}\inv \tilde{t} ctw c\tilde{t}\inv \tilde{t} c)\\
&=\B_{c\cdot\pi,\psi}( \tilde{t} ctw c\tilde{t}\inv)\\
&=\B_{c\cdot\pi,\psi}( t' cwc),
\end{align*}
where $t'=\tilde{t}ctc (cwc\tilde{t}\inv (cwc)\inv).$ Hence  for any $t\in\SO_{2l}$ and $w\in \mathrm{B}_l^c(\SO_{2l}),$ we have that $(\B_{\pi,\psi}+\B_{c\cdot\pi,\psi})(tw)=(\B_{\pi^\prime,\psi}+\B_{c\cdot\pi^\prime,\psi})(tw)$ and thus we have proved the theorem.
 \end{proof}

The following corollary, when combined with Corollary \ref{conj n gamma}, shows that $\gamma$-factor is unable to distinguish between a representation and its conjugate. That is, $\gamma(\pi\times\tau,\psi)=\gamma(c\cdot\pi\times\tau,\psi)$ for all irreducible generic representations $\tau$ of $\GL_{n}$ with $n\leq l$.

\begin{cor}\label{conj l gamma}
Let $\pi$ be an irreducible cuspidal $\psi$-generic representation of $\SO_{2l}$. Then we have $\gamma(\pi\times\tau,\psi)=\gamma(c\cdot\pi\times\tau,\psi)$ for all irreducible generic representations $\tau$ of $\GL_{l}$.
\end{cor}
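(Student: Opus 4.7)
The plan is to mimic the template of Corollary \ref{conj n gamma}, but using the twist by $\GL_l$ together with the Bruhat decomposition and the symmetrizing identities of Lemmas \ref{l contribution to GL_l} and \ref{l-1 contribution to GL_l}. First, by Proposition \ref{GL_lnonzero}, I would choose $v\in\tau$ so that
$$\Psi(\B_{\pi,\psi},f_v)=\Psi(\B_{c\cdot\pi,\psi},f_v)=W_v(w_{l,l})\neq 0.$$
By the defining relation of the $\gamma$-factor in Proposition \ref{gammafactor}, the corollary then reduces to showing that $\Psi(\B_{\pi,\psi},\tilde{f}_v)=\Psi(\B_{c\cdot\pi,\psi},\tilde{f}_v)$.

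Next I would exploit what is already known for smaller twists. Since conjugation by $c$ fixes the center of $\SO_{2l}$, the representations $\pi$ and $c\cdot\pi$ share the same central character, and Corollary \ref{conj n gamma} supplies the equality of $\gamma$-factors for all $\GL_n$-twists with $n\leq l-1$. Applying Theorems \ref{GL_n} and \ref{GL_{l-1}} with $\pi'=c\cdot\pi$ therefore yields $\B_{\pi,\psi}=\B_{c\cdot\pi,\psi}$ on every Bruhat cell indexed by $\mathrm{B}_n(\SO_{2l})$ with $n\leq l-2$, and on the locus $t_{l-1}(a)t'_{l-1}\tilde{w}_{l-1}$ for $a\in\GL_{l-1}$. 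Mimicking the central-character argument at the end of the proof of Theorem \ref{GL_n} extends this identification to the locus $-t_{l-1}(a)t'_{l-1}\tilde{w}_{l-1}$. When the zeta integral is partitioned by the Bessel support via Proposition \ref{BesselPartition}, the contributions from $\mathrm{B}_n(\SO_{2l})$ with $n\leq l-2$ cancel outright in the difference $\Psi(\B_{\pi,\psi}-\B_{c\cdot\pi,\psi},\tilde{f}_v)$; within $\mathrm{B}_{l-1}(\SO_{2l})$, Proposition \ref{l-1 twist embed} makes the $t_l=c_l$ piece vanish because $\tilde{f}_v=0$ there, while the $t_l=-c_l$ piece vanishes by the central-character extension just obtained.

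It therefore remains to show that the contributions from $\mathrm{B}_l(\SO_{2l})\cup\mathrm{B}_l^c(\SO_{2l})$ and from the subset $\{t\in T_l\}$ of $\mathrm{B}_{l-1}(\SO_{2l})$ agree for $\pi$ and $c\cdot\pi$. This is a direct consequence of the two symmetrization lemmas: applying Lemma \ref{l contribution to GL_l} once to $\pi$ and once to $c\cdot\pi$ (using $c\cdot(c\cdot\pi)=\pi$) rewrites both $\mathrm{B}_l\cup\mathrm{B}_l^c$ contributions as the symmetric sum
$$\sum_{t\in T_{\SO_{2l}},\, w\in\mathrm{B}_l(\SO_{2l}),\, u\in U_{\SO_{2l}}}(\B_{\pi,\psi}+\B_{c\cdot\pi,\psi})(twu)\,\tilde{f}_v(w_{l,l}twu,I_l),$$
and applying Lemma \ref{l-1 contribution to GL_l} likewise rewrites both $T_l$-portions of the $\mathrm{B}_{l-1}$ contribution as the analogous symmetric sum over $A_l$. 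Subtracting, $\Psi(\B_{\pi,\psi},\tilde{f}_v)=\Psi(\B_{c\cdot\pi,\psi},\tilde{f}_v)$, and the corollary follows.

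The main obstacle is not any single estimate but the bookkeeping: organizing the Bruhat decomposition so that the loci where previous twists identify $\B_{\pi,\psi}$ with $\B_{c\cdot\pi,\psi}$, the loci where Proposition \ref{l-1 twist embed} forces $\tilde{f}_v=0$, and the loci where the symmetrization lemmas apply, together cover everything once. The cancellations themselves are then formal consequences of the visible symmetry of Lemmas \ref{l contribution to GL_l} and \ref{l-1 contribution to GL_l} under $\pi\leftrightarrow c\cdot\pi$.
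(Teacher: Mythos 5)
Your proposal is correct and follows essentially the same route as the paper: nonvanishing of $\Psi(\B_{\pi,\psi},f_v)$ via Proposition \ref{GL_lnonzero}, reduction of $\Psi(\B_{\pi,\psi},\tilde{f}_v)-\Psi(\B_{c\cdot\pi,\psi},\tilde{f}_v)$ to the $\mathrm{B}_l$, $\mathrm{B}_l^c$ and $T_l$-part-of-$\mathrm{B}_{l-1}$ contributions using Corollary \ref{conj n gamma} with Theorems \ref{GL_n} and \ref{GL_{l-1}}, and then cancellation by the change of variables underlying Lemmas \ref{l contribution to GL_l} and \ref{l-1 contribution to GL_l} (which is exactly what the paper's phrase ``performing the change of variables as in the proof of Theorem \ref{GL_l}'' refers to). The only nitpick is a harmless sign bookkeeping slip for $l$ even, where Proposition \ref{l-1 twist embed} kills the $t_l=-\frac{1}{2}$ piece rather than the $t_l=+\frac{1}{2}$ piece, but both excluded pieces are handled either way.
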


\begin{proof}
By Corollary \ref{conj n gamma}, $\gamma(\pi\times\tau,\psi)=\gamma(c\cdot\pi\times\tau,\psi)$ for all irreducible generic representations $\tau$ of $\GL_{n}$ with $n\leq l-1$. So, by Theorems \ref{GL_n} and \ref{GL_{l-1}}, 
\begin{align*}
&\quad\Psi(\B_{\pi,\psi},\tilde{f}_v)-\Psi(\B_{c\cdot\pi,\psi},\tilde{f}_v) \\
&=\sum_{t\in T_{\SO_{2l}}, w\in \mathrm{B}_l(\SO_{2l}), u\in U_{\SO_{2l}}}
(\B_{\pi,\psi}-\B_{c\cdot\pi,\psi})(twu)\tilde{f}_v(w_{l,l}twu, I_l)\\
&\quad+\sum_{t\in T_{\SO_{2l}}, w\in \mathrm{B}_l^c(\SO_{2l}), u\in U_{\SO_{2l}}}
(\B_{\pi,\psi}-\B_{c\cdot\pi,\psi})(twu)\tilde{f}_v(w_{l,l} twu, I_l)\\
&\quad+\sum_{t\in T_l, w\in \mathrm{B}_{l-1}(\SO_{2l}), u\in U_{\SO_{2l}}}
(\B_{\pi,\psi}-\B_{c\cdot\pi,\psi})(twu)\tilde{f}_v(w_{l,l} twu, I_l).
\end{align*}
Performing the change of variables as in the proof of Theorem \ref{GL_l} on the $\B_{\pi,\psi}$ sums gives $\Psi(\B_{\pi,\psi},\tilde{f}_v)-\Psi(\B_{c\cdot\pi,\psi},\tilde{f}_v)=0$. By Proposition \ref{GL_lnonzero}, we may choose a nonzero $v\in\tau$ such that $\Psi(\B_{\pi,\psi},f_v)=\Psi(\B_{c\cdot\pi,\psi},f_v)=W_v(w_{l,l})\neq 0.$ Thus, we have $\gamma(\pi\times\tau,\psi)=\gamma(c\cdot\pi\times\tau,\psi)$. This proves the corollary.
 \end{proof}

\section{The converse theorem}

In this section, we prove the converse theorem. First, we combine the results of the previous sections to obtain the following theorem.

\begin{thm}\label{Bessels Equal}
Let $\pi$ and $\pi^\prime$ be irreducible cuspidal $\psi$-generic representations of split $\SO_{2l}(\mathbb{F}_q)$ with the same central character. If $$\gamma(\pi\times\tau,\psi)=\gamma(\pi^\prime\times\tau,\psi),$$
for all irreducible generic representations $\tau$ of $\GL_n$ with $n\leq l,$
then we have that $$(\B_{\pi,\psi}+\B_{c\cdot\pi,\psi})(g)=(\B_{\pi^\prime,\psi}+\B_{c\cdot\pi^\prime,\psi})(g)$$  for any $g\in\SO_{2l}(\mathbb{F}_q)$.
\end{thm}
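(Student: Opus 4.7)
The plan is a case-by-case synthesis of the three twist theorems across the Bessel-support partition. By the Bruhat decomposition, every $g \in \SO_{2l}$ has the form $u_1 t w u_2$ for some Weyl element $w$, $t \in T_{\SO_{2l}}$, and $u_1, u_2 \in U_{\SO_{2l}}$. Proposition \ref{Besselprop} gives $\B(u_1 t w u_2) = \psi(u_1 u_2)\B(tw)$, with the factor $\psi(u_1 u_2)$ independent of the representation, so it suffices to prove the sum identity at elements of the form $tw$. If $w \notin \mathrm{B}(\SO_{2l})$ then all four Bessel functions vanish on the corresponding Bruhat cell by the support lemma in Section \ref{bessel functions}, and both sides are zero. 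Otherwise, by Proposition \ref{BesselPartition}, $w$ lies in exactly one of $\mathrm{B}_0(\SO_{2l}), \mathrm{B}_1(\SO_{2l}), \ldots, \mathrm{B}_{l-1}(\SO_{2l}), \mathrm{B}_l(\SO_{2l}), \mathrm{B}_l^c(\SO_{2l})$.

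The case $w = I_{2l}$ is immediate from Lemma \ref{center}: both sides collapse to $2\chi(t)$ on the center and to $0$ elsewhere, where $\chi$ is the common central character and we use that $c$ fixes the center $\{\pm I_{2l}\}$ of $\SO_{2l}$. For $w \in \mathrm{B}_n(\SO_{2l})$ with $1 \le n \le l-2$, Theorem \ref{GL_n} applied to $(\pi, \pi')$ gives $\B_{\pi,\psi}(tw) = \B_{\pi',\psi}(tw)$. The analogous equality for the conjugates follows by applying the same theorem to $(c\cdot\pi, c\cdot\pi')$: these are again irreducible cuspidal $\psi$-generic with the same central character, and by Corollary \ref{conj n gamma} the hypothesis transfers through $\gamma(c\cdot\pi \times \tau, \psi) = \gamma(\pi \times \tau, \psi) = \gamma(\pi' \times \tau, \psi) = \gamma(c\cdot\pi' \times \tau, \psi)$ for every generic $\tau$ of $\GL_n$. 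Summing finishes this case. For $w \in \mathrm{B}_l(\SO_{2l}) \cup \mathrm{B}_l^c(\SO_{2l})$, Theorem \ref{GL_l}(1) is exactly the sum identity at $tw$.

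The remaining case $w \in \mathrm{B}_{l-1}(\SO_{2l})$ is where the two halves Theorem \ref{GL_{l-1}} and Theorem \ref{GL_l}(2) must meet. Writing $t = \mathrm{diag}(t_1, \ldots, t_l, t_l^{-1}, \ldots, t_1^{-1})$ and setting $c_l = 1$ for $l$ odd, $c_l = 1/2$ for $l$ even, Theorem \ref{GL_l}(2) directly handles the subcase $t_l \ne \pm c_l$. For the exceptional subcase $t_l \in \{\pm c_l\}$, Theorem \ref{GL_{l-1}} supplies equality of $\B_{\pi,\psi}$ and $\B_{\pi',\psi}$ on the representatives $t_{l-1}(a) t'_{l-1} \tilde{w}_{l-1}$; the sign ambiguity $t_l = \pm c_l$ is absorbed by the same central-character trick used at the end of the proof of Theorem \ref{GL_n}, extending the equality to every torus element with $t_l = \pm c_l$. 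The conjugate analog is obtained by replaying this argument with $(\pi, \pi')$ replaced by $(c\cdot\pi, c\cdot\pi')$: Corollary \ref{conj n gamma} transfers the hypothesis, Corollary \ref{cGL_n} is the base equality, and the central-character extension works verbatim.

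I expect the main technical point to be checking that the range of Theorem \ref{GL_{l-1}} (the exceptional torus $t_l = \pm c_l$) together with the range of Theorem \ref{GL_l}(2) (the complementary set $t_l \ne \pm c_l$) exactly covers all of $T_{\SO_{2l}} \cdot \mathrm{B}_{l-1}(\SO_{2l})$, and that the central-character extension from the specific representatives $t_{l-1}(a) t'_{l-1}\tilde{w}_{l-1}$ to the full exceptional torus genuinely identifies the two remaining sign branches. Once this bookkeeping is in place, assembling the cases and invoking Proposition \ref{Besselprop} to propagate from $tw$ to $u_1 t w u_2$ yields the identity on all of $\SO_{2l}(\mathbb{F}_q)$.
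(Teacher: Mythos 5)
Your proposal is correct and follows essentially the same route as the paper: reduce via the Bruhat decomposition and Proposition \ref{Besselprop} to elements $tw$ with $w$ in the Bessel support, invoke the partition of Proposition \ref{BesselPartition}, and then dispatch the cases with Lemma \ref{center}, Theorems \ref{GL_n}, \ref{GL_{l-1}}, \ref{GL_l}, and Corollaries \ref{cGL_n}/\ref{conj n gamma} for the conjugate representations. The paper's proof is just a terser citation of the same ingredients; your explicit case bookkeeping (in particular matching the exceptional torus $t_l=\pm c_l$ from Theorem \ref{GL_{l-1}} against the complementary range of Theorem \ref{GL_l}(2)) is exactly what that citation suppresses.
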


\begin{proof}
By the Bruhat decomposition, we may assume $g=u_1 t w u_2\in B_{\SO_{2l}}wB_{\SO_{2l}}$ for some $w\in W(\SO_{2l}).$ By Proposition \ref{Besselprop} and the definition of the Bessel support, it is enough to show that $(\B_{\pi,\psi}+\B_{c\cdot\pi,\psi})(tw)=(\B_{\pi^\prime,\psi}+\B_{c\cdot\pi^\prime,\psi})(tw)$ for any $t\in T_{\SO_{2l}}$ and $w\in \mathrm{B}(\SO_{2l}).$ This follows from Lemma \ref{center} (we use that $\pi$ and $\pi'$ have the same central character here), Proposition \ref{BesselPartition}, Theorems \ref{GL_n}, \ref{GL_{l-1}}, \ref{GL_l}, and Corollary \ref{cGL_n}. This concludes the proof of the theorem.
 \end{proof}

\begin{thm}[The Converse Theorem for $\SO_{2l}$]\label{converse thm}
Let $\pi$ and $\pi^\prime$ be irreducible cuspidal $\psi$-generic representations of split $\SO_{2l}(\mathbb{F}_q)$ with the same central character. If $$\gamma(\pi\times\tau,\psi)=\gamma(\pi^\prime\times\tau,\psi),$$ 
for all irreducible generic representations $\tau$ of $\GL_n(\mathbb{F}_q)$ with $n\leq l,$
then $\pi\cong\pi'$ or $\pi\cong c\cdot\pi'.$
\end{thm}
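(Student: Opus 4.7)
The plan is to deduce the converse theorem from Theorem \ref{Bessels Equal} together with uniqueness of Whittaker models. Theorem \ref{Bessels Equal} already gives us the identity
$$\B_{\pi,\psi}+\B_{c\cdot\pi,\psi}=\B_{\pi',\psi}+\B_{c\cdot\pi',\psi}$$
as vectors in $\mathrm{Ind}_{U_{\SO_{2l}}}^{\SO_{2l}}\psi$, so the remaining task is purely to disentangle this sum using multiplicity one. First, I will note that all four representations $\pi$, $c\cdot\pi$, $\pi'$, $c\cdot\pi'$ are irreducible cuspidal $\psi$-generic representations with the same central character, since conjugation by $c$ preserves these properties and fixes the center $\{\pm I_{2l}\}$ pointwise. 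By uniqueness of Whittaker models, if $\sigma_1\not\cong\sigma_2$ are two such representations, then $\mathcal{W}(\sigma_1,\psi)\cap\mathcal{W}(\sigma_2,\psi)=0$ inside $\mathrm{Ind}_{U_{\SO_{2l}}}^{\SO_{2l}}\psi$; and if $\pi\cong c\cdot\pi$, then $\mathcal{W}(\pi,\psi)=\mathcal{W}(c\cdot\pi,\psi)$, in which case the uniqueness of the normalized Bessel function inside this Whittaker model forces $\B_{\pi,\psi}=\B_{c\cdot\pi,\psi}$. It is also worth recording that
$$(\B_{\pi,\psi}+\B_{c\cdot\pi,\psi})(I_{2l})=\B_{\pi,\psi}(I_{2l})+\B_{\pi,\psi}(c\tilde{t}\inv\tilde{t}c)=2\neq 0,$$
so the common sum is a nonzero vector.

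Next, I will argue by contradiction, assuming $\pi\not\cong\pi'$ and $\pi\not\cong c\cdot\pi'$, and split into cases. If $\pi\cong c\cdot\pi$ and $\pi'\cong c\cdot\pi'$, the identity reduces to $2\B_{\pi,\psi}=2\B_{\pi',\psi}$, giving a nonzero element of $\mathcal{W}(\pi,\psi)\cap\mathcal{W}(\pi',\psi)$, contradicting $\pi\not\cong\pi'$. If $\pi\cong c\cdot\pi$ but $\pi'\not\cong c\cdot\pi'$ (or vice versa), then the left-hand side $2\B_{\pi,\psi}$ lies in $\mathcal{W}(\pi,\psi)$, while the right-hand side lies in $\mathcal{W}(\pi',\psi)\oplus\mathcal{W}(c\cdot\pi',\psi)$; since by assumption $\pi$ is isomorphic to neither summand, the two ambient spaces intersect trivially, forcing the sum to vanish and contradicting the value at $I_{2l}$. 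Finally, if $\pi\not\cong c\cdot\pi$ and $\pi'\not\cong c\cdot\pi'$, a short check (using that $c\cdot\pi\cong\pi'$ would imply $\pi\cong c\cdot\pi'$ after applying $c$, and similarly for the remaining crossings) shows that the four representations are pairwise non-isomorphic; the identity then sits in a direct sum of four pairwise disjoint Whittaker models, so each individual Bessel function must be zero, again a contradiction.

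There is essentially no obstacle here: the new representation-theoretic idea of working with the summed Bessel function $\B_{\pi,\psi}+\B_{c\cdot\pi,\psi}$ was already absorbed in the statement of Theorem \ref{Bessels Equal}, so the converse theorem is reduced to a finite case analysis on the four isomorphism classes $\pi$, $c\cdot\pi$, $\pi'$, $c\cdot\pi'$, with the key input being multiplicity one of the Whittaker model. The only point requiring a bit of care is to handle the case $\pi\cong c\cdot\pi$ uniformly, which is where the identification $\B_{\pi,\psi}=\B_{c\cdot\pi,\psi}$ (forced by uniqueness of the normalized Bessel function) is used, and the nonvanishing of the sum at $I_{2l}$, which rules out the cross cases where only one side is self-conjugate.
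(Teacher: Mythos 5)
Your proposal is correct and follows essentially the same route as the paper: both deduce the theorem from Theorem \ref{Bessels Equal} by viewing the summed Bessel functions inside $\mathrm{Ind}_{U_{\SO_{2l}}}^{\SO_{2l}}\psi$ and using multiplicity one of Whittaker models to force an isomorphism, with a case split on self-conjugacy under $c$. The only differences are organizational (you argue by contradiction and also split on whether $\pi'\cong c\cdot\pi'$, using the nonvanishing of the sum at $I_{2l}$, whereas the paper argues directly that $\mathcal{W}(\pi,\psi)$ meets a sum of the other Whittaker models nontrivially), and these do not change the substance of the argument.
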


\begin{proof}
By Theorem \ref{Bessels Equal}, $(\B_{\pi,\psi}+\B_{c\cdot\pi,\psi})=(\B_{\pi^\prime,\psi}+\B_{c\cdot\pi^\prime,\psi})$ on all of $\SO_{2l}.$ We let $W_\pi, W_{c\cdot\pi}, W_{\pi^\prime},$ and $W_{c\cdot\pi^\prime}$ be the Whittaker models of $\pi, c\cdot\pi, \pi^\prime$, and $c\cdot\pi^\prime$ respectively.

First, suppose that $\pi$ is isomorphic to $c\cdot\pi$. By uniqueness of Whittaker models, we have $\B_{\pi,\psi}+\B_{c\cdot\pi,\psi}=2\B_{\pi,\psi} \in W_\pi$. Then, $2\B_{\pi,\psi}=\B_{\pi^\prime,\psi}+\B_{c\cdot\pi^\prime,\psi}$ and hence $W_\pi \cap W_{\pi^\prime} \oplus W_{c\cdot\pi^\prime}\neq 0.$ Since $W_\pi$ is isomorphic to $\pi$ and is hence irreducible, we have that $W_\pi \cap W_{\pi^\prime} \oplus W_{c\cdot\pi^\prime}=W_\pi$ and therefore, by uniqueness of Whittaker models, $W_\pi$ must be isomorphic to one of  $W_{\pi^\prime}$ or $W_{c\cdot\pi^\prime}$.
 
Second, suppose $\pi$ is not isomorphic to $c\cdot\pi$. We have that $\B_{\pi,\psi}\in W_\pi$. We also have that $\B_{\pi^\prime,\psi}+\B_{c\cdot\pi^\prime,\psi}-\B_{c\cdot\pi,\psi}\in W_{\pi^\prime} \oplus W_{c\cdot\pi^\prime}\oplus W_{c\cdot\pi}$. Thus, $W_\pi \cap W_{\pi^\prime} \oplus W_{c\cdot\pi^\prime}\oplus W_{c\cdot\pi}\neq 0$ and hence the intersection is a nonzero subrepresentation of $W_\pi$. Since $W_\pi$ is isomorphic to $\pi$ and is hence irreducible, we have that $W_\pi \cap W_{\pi^\prime} \oplus W_{c\cdot\pi^\prime}\oplus W_{c\cdot\pi} =W_\pi$ and therefore, by uniqueness of Whittaker models, $W_\pi$ must be isomorphic to one of  $W_{c\cdot\pi}, W_{\pi^\prime},$ or $W_{c\cdot\pi^\prime}$. By assumption, $W_\pi$ is not isomorphic to $W_{c\cdot\pi}$ and hence must be isomorphic to $W_{\pi^\prime}$ or $W_{c\cdot\pi^\prime}$. 

Therefore, in either case, we have shown the converse theorem.
 \end{proof}

\end{document}